\documentclass[12pt]{amsart}

\usepackage[english]{babel}
\usepackage{amsmath,amssymb,amsfonts,amsthm,amsopn,amstext,amsxtra,amscd}
\usepackage{bm,mathrsfs,mathtools}
\usepackage{cite}
\usepackage{graphicx}
\usepackage{float}
\usepackage{xcolor}
\usepackage{url}

\usepackage[colorlinks,linkcolor=blue,anchorcolor=blue,citecolor=blue,backref=page]{hyperref}
\hypersetup{breaklinks=true}

 \usepackage[norefs,nocites]{refcheck}

\newtheorem{thm}{Theorem}
\newtheorem{lem}[thm]{Lemma}
\newtheorem{lemma}[thm]{Lemma}

\newtheorem{cor}[thm]{Corollary}
\newtheorem{prop}[thm]{Proposition}

\newtheorem{conj}[thm]{Conjecture}
\newtheorem{rem}[thm]{Remark}

\numberwithin{equation}{section}
\numberwithin{thm}{section}
\numberwithin{table}{section}

\usepackage{xcolor}
\usepackage{todonotes}
\newcommand{\abs}[1]{\left|#1\right|}
\newcommand{\norm}[1]{\left\|#1\right\|}
\newcommand{\mand}{\qquad\text{and}\qquad}

\newcommand{\cS}{\mathcal S}

\newcommand{\Z}{\mathbb Z}

\newcommand{\e}{\mathbf e}

\title[Fractional Parts of Sums of Square Roots]
{Moment Estimates and Discrepancy for Sums of Square Roots Modulo One}

\author[Y. Xiao] {Yixiu Xiao}
\address{School of Mathematical Sciences, Shanghai Jiao Tong University, 800 Dongchuan RD, 200240 Shanghai, China}
\email{yixiuxiao98@gmail.com}

\begin{document}

\begin{abstract}
Let $k\ge 2$ be fixed. We study the distribution modulo one of the $n^k$ sums
\begin{equation*}
    \sqrt{a_1} + \cdots + \sqrt{a_k}, \qquad 1\le a_1, \dots, a_k \le n,
\end{equation*}
counted with multiplicity. For
\begin{equation*}
    S(h,n) = \sum_{n/2\le a\le n} \mathbf{e}(h\sqrt{a}), \qquad \mathbf{e}(x) = \exp(2\pi i x),
\end{equation*}
we prove second- and fourth-moment estimates matching the diagonal scale up to a factor $n^\varepsilon$. More precisely,
\begin{equation*}
    \sum_{H/2\le h\le H} \left| S(h,n) \right|^2 \ll_{\varepsilon,\delta} Hn^{1+\varepsilon}
\end{equation*}
uniformly for $H\ge n^{1/2+\delta}$, and
\begin{equation*}
    \sum_{H/2\le h\le H} \left| S(h,n) \right|^4 \ll_{\varepsilon,\delta} Hn^{2+\varepsilon}
\end{equation*}
uniformly for $n^{1/2+\delta} \le H \le n^{2/3}$, where $0<\delta<1/6$ in the fourth-moment estimate. Combining the second-moment bound with pointwise exponential-sum estimates and the Erd\H{o}s--Tur\'an inequality, we obtain
\begin{equation*}
    D_k(n) \le n^{-\rho_k+o(1)}, \qquad \rho_k = \frac{71k+26}{26k+116},
\end{equation*}
as $n\to\infty$, where $D_k(n)$ denotes the discrepancy with respect to arbitrary
subintervals of $[0,1)$.
\end{abstract}

\subjclass[2020]{11J25, 11K31, 11L07}

\keywords{Square roots, fractional parts, discrepancy, exponential sums}

\maketitle

\tableofcontents

\section{Introduction}
\subsection{Set-up and main result}
We are interested in the fractional parts of the sums 
$\sqrt{a_1} + \ldots + \sqrt{a_k}$ with  $k \ge 2$ positive integers 
$a_1, \ldots, a_k \le n$ for a large  $n \ge 1$ (and fixed $k$). 
We refer to~\cite{AnEi, BFMS,ChLi, CMDC, Dub, Iyer, Ste} for a broad variety of results on the distribution of  such sums 
and  their applications, including applications to computer science.

More precisely, for a real number $\xi$ we write
\[
\norm{\xi}=\min_{z\in\Z}|\xi-z|
\]
for its distance to the nearest integer.

Steinerberger~\cite{Ste} gives several upper bounds on 
\begin{equation*}
\begin{aligned}
    \delta_k(n) &= \min \Bigl\{ \lVert \sqrt{a_1} + \cdots + \sqrt{a_k} \rVert : \\
    &\qquad a_i \in \mathbb{Z} \cap [1,n], \; \lVert \sqrt{a_1} + \cdots + \sqrt{a_k} \rVert > 0 \Bigr\}.
\end{aligned}
\end{equation*}
Angluin and Eisenstat~\cite{AnEi} give an explicit construction which shows that
\begin{equation*}
    \delta_2(n) = O \left( n^{-3/2} \right).
\end{equation*}
and a construction of a similar spirit in~\cite{Ste}, which gives 
\begin{equation*}
    \delta_3(n) = O \left( n^{-5/2} \right).
\end{equation*}

We also define the 
inhomogeneous version 
\begin{equation*}
\begin{aligned}
    \Delta_k(n) &= \sup_{\gamma \in [0,1]} \min \Bigl\{ \lVert \sqrt{a_1} + \cdots + \sqrt{a_k} - \gamma \rVert : \\
    &\qquad a_1, \dots, a_k \in \mathbb{Z} \cap [1,n] \Bigr\}.
\end{aligned}
\end{equation*}
Clearly this question is more general as taking $\gamma = 2\Delta_k(n)$, we immediately see that 
$\delta_k(n) \le 3\Delta_k(n)$.

Iyer~\cite[Theorem~1.1]{Iyer} established that
\[
\Delta_k(n)\ll_k n^{-\gamma_k},
\]
where \(\gamma_k\ge (k-1)/4\), with \(\gamma_2=1\) and
\(\gamma_k=k/2\) when \(k=2^m-1\).
Korsky \cite{Korsky} obtained a stronger result for
this inhomogeneous approximation problem. Specialised to square roots,
his theorem gives, for every fixed $k\geq 1$ and every $\varepsilon>0$,
\[
    \Delta_k(n) \ll_{k,\varepsilon} n^{-k/2+\varepsilon},
\]
although with an ineffective implied constant.

In this paper, we address a different, distributional question. We estimate the interval discrepancy $D_k(n)$ of the fractional parts of the points
\begin{equation*}
    \sqrt{a_1} + \cdots + \sqrt{a_k}, \qquad a_1, \dots, a_k \in \mathbb{Z} \cap [1,n].
\end{equation*}
Specifically, for $0 \le \alpha < \beta \le 1$, let $N_k(n;\alpha,\beta)$ denote the number of tuples $(a_1, \dots, a_k) \in [1,n]^k \cap \mathbb{Z}^k$, counted with multiplicity, for which
\begin{equation*}
    \left\{ \sqrt{a_1} + \cdots + \sqrt{a_k} \right\} \in [\alpha, \beta).
\end{equation*}
We define the interval discrepancy
\begin{equation*}
    D_k(n) = \sup_{0 \le \alpha < \beta \le 1} \left| \frac{1}{n^k} N_k(n;\alpha,\beta) - (\beta-\alpha) \right|.
\end{equation*}

The quantities $\delta_k(n)$ and $\Delta_k(n)$ measure the existence of a sum of square roots close to a prescribed target. In contrast, $D_k(n)$ measures the global distribution of all $n^k$ such sums modulo one in arbitrary intervals. Thus discrepancy estimates require cancellation in exponential sums uniformly over many frequencies, and the methods used for inhomogeneous approximation do not directly apply.

The analytic core of the paper consists of second and fourth moment estimates for the exponential sums associated with a dyadic block. Throughout, we use the standard notation $\e(x) := \exp(2\pi i x)$ and write $x \sim X$ to mean $X/2 \le x \le X$.
We define
\begin{equation}
\label{eq:Shn}
    S(h,n)=\sum_{a\sim n}\e(h\sqrt a)
\end{equation}
and, for $r\ge 1$,
\begin{equation}
\label{eq:def_M}
    M_{2r}(H,n)=\sum_{h\sim H}|S(h,n)|^{2r}.
\end{equation}
Our first main theorem gives the expected second-moment bound, up to the factor $n^\varepsilon$.

\begin{thm}[Second moment]
\label{thm:M2-main}
For every fixed $\delta>0$ and every fixed $\varepsilon>0$, uniformly for
\[
    H\ge n^{1/2+\delta},
\]
one has
\[
    M_2(H,n)\ll_{\varepsilon,\delta}Hn^{1+\varepsilon}.
\]
\end{thm}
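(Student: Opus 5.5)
Expanding the square and swapping the order of summation gives
\[
M_2(H,n)=\sum_{a,b\sim n}\ \sum_{h\sim H}\e\bigl(h(\sqrt a-\sqrt b)\bigr).
\]
The diagonal $a=b$ contributes $\asymp Hn$. For $a\ne b$ the inner sum is a geometric series. With a smooth weight majorising the indicator of $h\sim H$ (positivity lets us do this for the squared modulus), the inner sum is $\ll H(1+H\norm{\sqrt a-\sqrt b})^{-A}$ for any $A$. So it suffices to show
\[
\#\bigl\{(a,b)\sim n:\ a\ne b,\ \norm{\sqrt a-\sqrt b}\le H^{-1}n^{\varepsilon}\bigr\}\ll n^{1+\varepsilon},
\]
since each such pair contributes at most $H$, and the tails summed dyadically give the same order.

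To count these pairs, write $\sqrt a-\sqrt b=m+\theta$ with $m\in\Z$, $|m|\ll\sqrt n$ and $|\theta|\le\eta:=H^{-1}n^{\varepsilon}\le n^{-1/2-\delta+\varepsilon}$.

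\emph{Case $m=0$.} Here $|\sqrt a-\sqrt b|\gg n^{-1/2}$ whenever $a\ne b$. Since $\eta=o(n^{-1/2})$ (take $\varepsilon<\delta$), this case contributes nothing.

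\emph{Case $m\ne0$.} Square the relation $\sqrt a=\sqrt b+m+\theta$ to get
\[
a-b-m^2=2m\sqrt b+O(\sqrt n\,\eta).
\]
Thus $2m\sqrt b$, that is $\sqrt{4m^2b}$, lies within $O(n^{1/2}\eta)\le n^{-\delta+\varepsilon}$ of an integer. Set $N=4m^2b\asymp m^2n$. If $N$ is a perfect square, $\sqrt N=\ell$ is exactly an integer. Otherwise $\norm{\sqrt N}\gg N^{-1/2}$, which is far too weak. So I would instead count pairs $(m,b)$ for which the integer $N=4m^2b$ lies within $O(\sqrt N\,n^{-\delta+\varepsilon})$ of a square $\ell^2$. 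That is, we need $4m^2b=\ell^2+r$ with $|r|\ll |m|\,n^{1/2-\delta+\varepsilon}$.

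Once $m,\ell,r$ are fixed, $b$ is determined, and $a$ is then determined up to $O(1)$ choices by $a=b+m^2+\ell+O(\cdot)$. The count is therefore the number of triples $(m,\ell,r)$ with $4m^2\mid \ell^2+r$ and $\ell\asymp |m|\sqrt n$. For fixed $m$ and $r$, the number of $\ell$ in a range of length $|m|\sqrt n$ with $\ell^2\equiv -r \pmod{4m^2}$ is
\[
\ll \Bigl(\frac{\sqrt n}{|m|}+1\Bigr)\,n^{\varepsilon}\,\gcd(r,4m^2)^{1/2},
\]
using the divisor bound for the number of square roots modulo $4m^2$. Summing over $|r|\ll |m|\,n^{1/2-\delta+\varepsilon}$ and then over $m\ll\sqrt n$ gives roughly
\[
\sum_m |m|\,n^{1/2-\delta}\Bigl(\frac{\sqrt n}{|m|}+1\Bigr)n^{\varepsilon}\ll n^{3/2-\delta+\varepsilon}.
\]
This is too big.

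The main obstacle is this counting step, and I would attack it as follows. First, by divisibility, the gcd factor averages to $n^{\varepsilon}$. Second, the crude parametrisation double counts: $b$ ranges over only $\asymp n$ values, so a fixed $b$ can be good for many $m$ only if $\sqrt b$ is close to rationals $\ell/(2m)$ with many denominators. This is really a count of $b\sim n$ and $q=2m\le\sqrt n$ with $\norm{q\sqrt b}\le\kappa:=n^{-\delta+\varepsilon}$. I would bound it by applying Erd\H{o}s--Tur\'an/Vinogradov in $b$ for each $q$, using van der Corput on $\sum_{b}\e(jq\sqrt b)$ with $j\le\kappa^{-1}$. The exponent-pair bound $(jq)^{1/2}n^{1/4}$ should be adequate. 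It gives
\[
\#\{b\}\ll n\kappa+\sum_{j\le\kappa^{-1}}j^{-1}(jq)^{1/2}n^{1/4},
\]
and summing over $q\le\sqrt n$ yields $n^{3/2}\kappa+\kappa^{-1/2}n$. That is still $n^{3/2-\delta}$ from the main term, which is genuine. Hence the reduction to $\norm{\cdot}\le1/H$ must keep the $m$-dependence: the correct condition is $|\theta|\le\eta$, and the error after squaring is $|m|\eta$ rather than $\sqrt n\,\eta$. Redoing the count with $\kappa_m=|m|\eta$ gives a main term $\sum_m n|m|\eta\ll n^{2}\eta\ll n^{2}/H$. This is $\le n^{3/2-\delta}$ and is again insufficient, which suggests that $M_2$ genuinely counts near-integer differences.

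I would therefore check the diagonal heuristic more carefully. The expected count of pairs is $n^2\eta$, so $M_2\approx Hn+n^2$, and $n^2\le Hn^{1+\varepsilon}$ requires $H\ge n$. The stated range $H\ge n^{1/2+\delta}$ must therefore rely on genuine cancellation from Poisson summation in $a$. So the plan is first to apply van der Corput's B-process to $S(h,n)$. This converts it to a sum of length $\asymp h/\sqrt n$ with phase $h^2/(4\ell)$ and amplitude $\sqrt n\,h^{-1/2}$, and the second moment becomes
\[
M_2\approx \frac{n}{H}\sum_{\ell,\ell'\asymp H/\sqrt n}\Bigl|\sum_{h\sim H}\e\Bigl(\frac{h^2}{4}\Bigl(\frac1\ell-\frac1{\ell'}\Bigr)\Bigr)\Bigr|.
\]
The diagonal $\ell=\ell'$ gives $Hn\cdot H/\sqrt n\cdot H^{-1}$. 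The off-diagonal is handled by Weyl/Gauss-sum bounds on the quadratic sums in $h$, and controlling this off-diagonal is the real obstacle.
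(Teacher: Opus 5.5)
Your proposal has a genuine gap. After setting aside the near-collision count, you reach the right strategy: dualise $S(h,n)$ by Poisson summation and stationary phase, then take the second moment of the dual quadratic sums. That is what the paper does for $n^{1/2+\delta}\le H\le n$. But you stop at the step that carries the proof, which you yourself call ``the real obstacle''.

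A uniform bound over pairs does not suffice there. For $m_1\ne m_2\asymp M=H/\sqrt n$, the reduced denominator $q$ of $\alpha=\frac{1}{4}\bigl(\frac{1}{m_1}-\frac{1}{m_2}\bigr)$ is $\asymp M^2$ for a generic pair. It can drop to $\asymp M$, for example when $m_2=2m_1$, so a worst-case Weyl bound only gives $HM^{3/2}$.

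The missing argument is a count by gcd structure:
\begin{itemize}
\item Write $m_1=gu$, $m_2=gv$ with $(u,v)=1$. Then $\alpha=\frac{v-u}{4guv}$ has denominator $q=4guv/d$ with $d=(v-u,4g)$, so $q\asymp M^2/(gd)$.
\item Apply Weyl's bound $\sum_{h\in J}\e(\alpha h^2)\ll Hq^{-1/2}+q^{1/2}\log(2q)$.
\item Count the pairs with $u\equiv v\pmod d$: there are $\ll (M/g)^2/d+M/g$ of them.
\item Sum over $g\ll M$ and $d\mid 4g$. The off-diagonal total is $\ll n^{\varepsilon}(HM+M^3)$, which is $\ll HMn^{\varepsilon}$ precisely when $M^2\le H$, i.e.\ $H\le n$.
\end{itemize}

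Several other pieces are missing or wrong.

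\emph{Normalisation.} The dual terms have size $hm^{-3/2}\asymp n^{3/4}h^{-1/2}$, that is $|f''|^{-1/2}$ with $f''\asymp hn^{-3/2}$, not $\sqrt n\,h^{-1/2}$. So the prefactor is $n^{3/2}/H$, and the diagonal is $\frac{n^{3/2}}{H}\cdot M\cdot H=Hn$. Your displayed prefactor $n/H$ gives $H\sqrt n$, which does not match the target.

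\emph{Error terms.} Your ``$\approx$'' hides the errors of the transformation and the $h$-dependence of the dual amplitudes and ranges. The paper handles these as follows:
\begin{itemize}
\item It smooths with $W_{\omega,n}$, at a cost of $H\omega^2$, with stationary-phase error $n^{7/4}\omega^{-2}H^{-1/2}$ per $h$.
\item It chooses $n^{5/8}H^{-1/4}\ll\omega\le n^{1/2}$; this range is nonempty for $H\ge n^{1/2+\delta}$.
\item It removes the weights by bounded variation and partial summation, so that only $\sup_J\bigl|\sum_{h\in J}\e(\alpha h^2)\bigr|$ remains.
\end{itemize}

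\emph{The range $H>n$.} The dual bound fails there, since $M^3>HM$. You must keep your first approach for that range rather than discard it as a failure. Your count via $(m,\ell,r)$ is the paper's near-collision lemma. It gives $\ll n^{\varepsilon}(n+n^2\eta)$ pairs with $\norm{\sqrt a-\sqrt b}\le\eta$. Combined with your smooth-weight reduction, this yields $M_2(H,n)\ll n^{O(\varepsilon)}(Hn+n^2)\ll Hn^{1+O(\varepsilon)}$ for $H\ge n$.
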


The same method gives the first higher-moment case in a shorter range.

\begin{thm}[Fourth moment]
\label{thm:M4-main}
For every fixed $0<\delta<1/6$ and every fixed $\varepsilon>0$, uniformly for
\[
    n^{1/2+\delta}\le H\le n^{2/3},
\]
one has
\[
    M_4(H,n)\ll_{\varepsilon,\delta}Hn^{2+\varepsilon}.
\]
\end{thm}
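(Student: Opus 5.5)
The plan is to dualise each $S(h,n)$ by the van der Corput $B$-process (Poisson summation followed by stationary phase), which turns the problem into a fourth moment of short quadratic Weyl sums with rational coefficients. The restriction $H\le n^{2/3}$ should be exactly what makes the denominators of those rationals smaller than the length of the $h$-sum, so that complete Gauss sum bounds apply.

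\emph{Step 1 ($B$-process).} Fix $h\sim H$ and put $f(x)=h\sqrt x$. Then $f'(x)=h/(2\sqrt x)$ runs over an interval of length $\asymp M:=H/\sqrt n$, and $|f''(x)|\asymp Hn^{-3/2}$. Since $H\ge n^{1/2+\delta}$, we have $M\ge n^{\delta}$. Poisson summation and stationary phase should give
\[
S(h,n)=e^{-\pi i/4}\sum_{m\in I_h} \frac{h}{\sqrt 2\, m^{3/2}}\,\e\Bigl(\frac{h^2}{4m}\Bigr)+O\bigl(\log n+ n^{1/2}H^{-1/2}\bigr).
\]
Here $I_h=\{m: h/(2\sqrt n)\le m\le h/\sqrt{2n}\}$ is an interval of length $\asymp M$. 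The stationary point of $h\sqrt x-mx$ is $x=h^2/(4m^2)$, with value $h^2/(4m)$, and each amplitude $h/(\sqrt2 m^{3/2})$ is of size $\asymp n^{3/4}H^{-1/2}$. The error term contributes at most $H(\log n+n^{1/2}H^{-1/2})^4\ll Hn^{2+\varepsilon}$ to $M_4$, so it is harmless. Removing the weights $h/m^{3/2}$ and the $h$-dependence of $I_h$ by partial summation (in $m$) and a standard Fourier-separation of the cutoff costs only $n^{\varepsilon}$. This reduces Theorem~\ref{thm:M4-main} to
\[
\sum_{h\sim H}\Bigl|\sum_{m\sim M}\e\Bigl(\frac{h^2}{4m}\Bigr)\Bigr|^4\ll HM^2n^{\varepsilon},
\]
because $n^3H^{-2}\cdot HM^2=Hn^2$.

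\emph{Step 2 (expand the fourth power).} Write the left-hand side as
\[
\sum_{m_1,\dots,m_4\sim M}\ \sum_{h\sim H}\e\bigl(h^2\beta\bigr),\qquad \beta=\frac14\Bigl(\frac1{m_1}+\frac1{m_2}-\frac1{m_3}-\frac1{m_4}\Bigr)=\frac{a}{q},
\]
with $(a,q)=1$ and $q\mid 4m_1m_2m_3m_4$, so that $q\ll M^4$.

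\emph{Diagonal terms ($\beta\in\Z$).} The $h$-sum is trivially $\ll H$. For $m_i\sim M$ we have $|\beta|<1$, so $\beta\in\Z$ forces $\beta=0$, i.e.\ $1/m_1+1/m_2=1/m_3+1/m_4$. By a divisor-bound argument this has $\ll M^{2+\varepsilon}$ solutions, giving the admissible total $HM^{2+\varepsilon}$.

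\emph{Off-diagonal terms.} The key observation is that $q\ll M^4=H^4n^{-2}\le H$, and this inequality holds precisely because $H\le n^{2/3}$. So the $h$-sum covers at least one full period modulo $q$. Splitting into complete residue systems and using the Gauss sum bound gives
\[
\sum_{h\sim H}\e(h^2a/q)\ll H q^{-1/2}+q^{1/2}\ll Hq^{-1/2}.
\]
It then remains to show
\[
\sum_{\substack{m_1,\dots,m_4\sim M\\ \beta\notin\Z}} q(m)^{-1/2}\ll M^{2+\varepsilon}.
\]
For generic quadruples $q\asymp M^4$, and these contribute $M^4\cdot M^{-2}=M^2$. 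For the others, I would parametrise by $d=\gcd$-type data: $q$ drops only when the numerator $N=m_2m_3m_4+m_1m_3m_4-\dots$ shares factors with $m_1m_2m_3m_4$. The number of quadruples with $q\le Q$ should be $\ll M^{\varepsilon}Q^{1/2}M^2$, via divisor-function counts of $N\equiv0$ modulo the cancelled part. A dyadic sum over $Q$ then gives $M^{2+\varepsilon}$.

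I expect this last counting of quadruples with abnormally small reduced denominator to be the main obstacle. Common factors among the $m_i$, for instance $m_1=m_3$ with $m_2\ne m_4$, produce many partially degenerate configurations that each need a separate divisor-bound count. I would attack it by writing $m_i=g\,m_i'$ over the various gcd lattices and treating each degenerate pattern separately. A secondary technical point is making the uniformity of Step 1 rigorous near the ends of $I_h$, which I would handle with a smooth dyadic partition in $a$.
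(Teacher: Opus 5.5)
Your outline follows the paper's route. You dualise each $S(h,n)$ into sums $\sum_{m\asymp M}\e(h^2/4m)$ with amplitude $\asymp n^{3/4}H^{-1/2}$ and expand the fourth power. You count the diagonal $1/m_1+1/m_2=1/m_3+1/m_4$ by the reciprocal-energy bound $M^{2+\varepsilon}$. You treat each off-diagonal $h$-sum by the quadratic Weyl bound $Hq^{-1/2}+q^{1/2}\log q$, where the $q^{1/2}$ term is harmless precisely when $M^4\le H$, i.e.\ $H\le n^{2/3}$.

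The paper instead smooths the $a$-sum with edge length $\omega\le n^{1/2}$, paying $H\omega^4\le Hn^2$, rather than using the sharp-cutoff $B$-process. It also removes the weights by bounded variation in $h$. These differences are immaterial. One small correction: the sharp-cutoff $B$-process error is $O(\Lambda^{-1/2}+\log n)$ with $\Lambda^{-1/2}\asymp n^{3/4}H^{-1/2}$, not $n^{1/2}H^{-1/2}$. Its contribution $n^3H^{-1}$ is still $\le Hn^2$ because $H\ge n^{1/2}$.

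The genuine gap is the step you flag yourself. Your bound $\sum_{\beta\ne 0}q(\mathbf m)^{-1/2}\ll M^{2+\varepsilon}$ rests on the count $\#\{\mathbf m: q\le Q\}\ll M^{2+\varepsilon}Q^{1/2}$. You only say this ``should'' hold, with a plan to treat degenerate gcd patterns one at a time. The count is in fact true: it follows from the paper's bound via $\#\{q\le Q\}\le Q^{1/2}\sum q^{-1/2}$. But nothing in your proposal proves it, and it is the heart of the off-diagonal estimate.

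The idea that closes it is to pair the variables instead of analysing $\gcd(N,4m_1m_2m_3m_4)$ directly.
\begin{itemize}
\item Write $1/m_1+1/m_2=u/v$ and $1/m_3+1/m_4=u'/v'$ in lowest terms. Since $(u,v)=(u',v')=1$, one checks $(uv'-u'v,vv')\le (v,v')^2$. Hence the reduced denominator $Q$ of $u/v-u'/v'$ satisfies $Q^{-1/2}\le (v,v')(vv')^{-1/2}$.
\item This reduces everything to the gcd sum $\sum_{a,b,c,d\sim M}(v(a,b),v(c,d))\,(v(a,b)v(c,d))^{-1/2}$. Using $(v,v')\le\sum_{\ell\mid v,\,\ell\mid v'}\ell$, this sum is at most $\sum_\ell \ell A_\ell^2$, where $A_\ell=\sum_{a,b\sim M,\ \ell\mid v(a,b)}v(a,b)^{-1/2}$.
\item For $A_\ell$, use the exact parametrisation $a=gx$, $b=gy$, $(x,y)=1$, $t=(g,x+y)$, $s=g/t$, for which $v(a,b)=sxy$. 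Splitting $\ell=\ell_0\ell_1\ell_2$ with $\ell_0\mid s$, $\ell_1\mid x$, $\ell_2\mid y$ gives $A_\ell\ll M^{1+\varepsilon}/\ell$.
\item Hence $\sum_\ell\ell A_\ell^2\ll M^{2+\varepsilon}$, which is the bound you need.
\end{itemize}
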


The orders $Hn$ and $Hn^2$ in Theorems~\ref{thm:M2-main} and~\ref{thm:M4-main}, respectively, are the natural diagonal scales for these moments. Combining Theorem~\ref{thm:M2-main} with pointwise exponential-sum bounds and the Erd\H{o}s--Tur\'an inequality yields the following discrepancy estimate.

\begin{thm}[Interval discrepancy]
\label{thm:discrk}
Let $k\ge 2$ be fixed. Then
\[
    D_k(n)\le n^{-\rho_k+o(1)}
\]
as $n\to\infty$, where
\[
    \rho_k=\frac{71k+26}{26k+116}.
\]
\end{thm}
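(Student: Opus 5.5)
We will derive the discrepancy bound of Theorem~\ref{thm:discrk} from the Erd\H{o}s--Tur\'an inequality. For any $T\ge1$ it gives
\[
D_k(n)\ll \frac1T+\sum_{1\le h\le T}\frac1h\Bigl|\frac{1}{n^k}\sum_{a_1,\dots,a_k\le n}\e\bigl(h(\sqrt{a_1}+\cdots+\sqrt{a_k})\bigr)\Bigr|
=\frac1T+\sum_{h\le T}\frac1h\Bigl|\frac{F(h,n)}{n}\Bigr|^k,
\]
where $F(h,n)=\sum_{a\le n}\e(h\sqrt a)$. First we split $[1,n]$ dyadically into blocks $a\sim m$ with $m=n2^{-j}$. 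The blocks with $m\le n^{1-\eta}$ contribute trivially $O(n^{1-\eta})$ to $F$, and this is harmless once $\eta$ is tuned. So up to $n^{o(1)}$ we may replace $|F(h,n)|$ by $\max_{m}|S(h,m)|$ over $m\in[n^{1-\eta},n]$, where $S(h,m)$ is as in \eqref{eq:Shn}. Next we split $h$ dyadically, $h\sim H$ with $H\le T$. The task becomes bounding
\[
\Sigma(H)=\frac1H\sum_{h\sim H}\Bigl(\frac{|S(h,m)|}{n}\Bigr)^k .
\]

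We bound $\Sigma(H)$ by interpolating a pointwise bound with the second moment:
\[
\sum_{h\sim H}|S(h,m)|^k\le \Bigl(\max_{h\sim H}|S(h,m)|\Bigr)^{k-2}M_2(H,m).
\]
For $H\ge m^{1/2+\delta}$, Theorem~\ref{thm:M2-main} gives $M_2(H,m)\ll Hm^{1+\varepsilon}$. For the pointwise factor we use the standard van der Corput type bounds for $f(a)=h\sqrt a$ on $a\sim m$, where $f''\asymp hm^{-3/2}$. The second derivative test gives $|S|\ll m(hm^{-3/2})^{1/2}+(hm^{-3/2})^{-1/2}$. The exponent pair that produces the constants $71,26,116$ is presumably Huxley's exponent pair $(13/84+\varepsilon,55/84+\varepsilon)$, or a bound of the shape $|S(h,m)|\ll (h/\sqrt m)^{\kappa}m^{\lambda}$ with $\lambda(\kappa,\lambda)$ from that pair. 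Applied with $\lambda=hm^{-1/2}$ as the size of $f'$, it gives $|S|\ll (H m^{-1/2})^{13/84}m^{55/84}$. With this,
\[
\Sigma(H)\ll n^{\varepsilon}\,n^{-1}\Bigl(\frac{H^{13/84}n^{55/84-13/168}}{n}\Bigr)^{k-2}.
\]
For small $H\le n^{1/2+\delta}$, where Theorem~\ref{thm:M2-main} is unavailable, we instead use the pointwise bound for all $k$ factors. There $f'\asymp H/\sqrt n$ is small, and after Poisson summation (or the Kusmin--Landau bound when $H\le \sqrt n$) one gets $|S|\ll \sqrt n/H$ or the exponent-pair bound. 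These give $\Sigma(H)\ll n^{-\rho}$ comfortably.

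Finally we collect the contributions. Summing over the $O(\log n)$ dyadic $H\le T$ and adding $1/T$, we choose $T=n^{\tau}$ to balance $1/T$ against the worst dyadic term, namely $H=T$ in the second-moment range. This gives an equation of the form
\[
\tau=1+(k-2)\bigl(\alpha-\beta\tau\bigr)
\]
for suitable rational $\alpha,\beta$ coming from the exponent pair. Solving it yields $\tau=\rho_k=(71k+26)/(26k+116)$, provided the chosen pair's arithmetic matches. We would verify that the denominators $26k+116$ arise, and adjust the pointwise input to whichever exponent-pair bound reproduces it. We must also check that $T=n^{\rho_k}$ lies in the range where the pointwise bound is nontrivial, and that $\rho_k<$ the admissible range so $H\ge n^{1/2+\delta}$ is consistent.

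The main obstacle is the bookkeeping of ranges. We must ensure that the sub-dyadic blocks $m<n$, where $H$ might fall below $m^{1/2+\delta}$ or where the pointwise bound behaves differently, never dominate. We also need to pick the optimal pointwise estimate in each $H$-range so that the final exponent is exactly $\rho_k$. The first thing to try is the exponent-pair bound combined with $M_2$ throughout, with the trivial tail for $m\le n^{1-\eta}$.
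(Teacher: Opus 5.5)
Your plan is the paper's proof: Erd\H{o}s--Tur\'an with the factorisation $F(h,n)^k$, dyadic blocks in $h$ and $m$, the interpolation $\sum_{h\sim H}|S(h,m)|^k\le (\max_{h\sim H}|S(h,m)|)^{k-2}M_2(H,m)$ with Theorem~\ref{thm:M2-main}, and Bourgain's (not Huxley's) pair, which is $(13/84,13/84)$ in the normalisation of Lemma~\ref{lem:ExpPair}. Two points of comparison with the paper are fine as you have them: you may use that pair on all of $H\ge m^{1/2+\delta}$ instead of switching to the second-derivative bound on the middle range as the paper does, because the bound increases in $H$ and $m$ and neither input has an upper limit on $H$, so $T=n^{\rho_k}\ge n$ causes no trouble; and the arithmetic you left open closes, since your balancing equation with $\alpha=71/168$, $\beta=13/84$ solves to exactly $\tau=\rho_k$.

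Two corrections are needed. First, the range $H\le m^{1/2+\delta}$ (the threshold should be relative to $m$, not $n$) is not ``comfortable'': $|S(h,m)|\ll m^{1/2+O(\delta)}$ gives only $n^{-k/2+O(\delta)}$, and $\rho_k\le k/2$ is an equality at $k=2$ (cf.~\eqref{eq:rho-first-check}), so you must let $\delta\to0$. Second, the cut at $m\le n^{1-\eta}$ is unnecessary and would in fact need $\eta\ge\rho_k/k$; every dyadic $m\le n$ can simply be kept.
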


Theorem~\ref{thm:discrk} uses the second-moment estimate, while Theorem~\ref{thm:M4-main} is a further main output of the same smoothing and duality method and represents the first step toward higher moments.
We note that the method of~\cite{Iyer, Korsky} does not apply to bounding the discrepancy $D_k(n)$.

Section~\ref{sec:Pre} collects preliminary and elementary estimates. In Section~\ref{sec:smoothing} we prove Theorems~\ref{thm:M2-main} and~\ref{thm:M4-main} by smoothing the original sums and reducing them to weighted dual quadratic moments. Section~\ref{sec:dis} combines Theorem~\ref{thm:M2-main} with pointwise estimates to prove Theorem~\ref{thm:discrk}.

\subsection{General notation and conventions}  
We use the notations $U=O(V)$, $U\ll V$ and $V\gg U$ interchangeably to mean that
$|U|\leq cV$ for some positive constant $c$. Unless indicated otherwise, implied constants may depend on fixed parameters.

We write $U=V^{o(1)}$ if, for every fixed $\varepsilon>0$, one has
$|U|\leq V^{\varepsilon}$ for all sufficiently large $V$.

For a finite set $\cS$ we use $\#\cS$ to denote its cardinality.

\section{Preliminaries and elementary estimates}
\label{sec:Pre}
 
\subsection{Erd\H{o}s--Tur\'an inequality}

Let $\xi_1, \ldots, \xi_N \in [0,1].$
We recall the following result given in~\cite[Chapter~1, Theorem~1]{Mont}.
\begin{lemma}
\label{lem:Discr}
Let $0 \le \alpha < \beta \le 1$. Then for any integer $H\ge 1$
\begin{equation*}
    D(\alpha, \beta) = \#\{j : \xi_j \in [\alpha, \beta), \; 1\le j \le N\} - (\beta-\alpha)N,
\end{equation*}
we have 
\begin{equation*}
\begin{aligned}
    \left| D(\alpha, \beta) \right| 
    &\le \frac{N}{H+1} \\
    &\quad + 2 \sum_{h=1}^H \left( \frac{1}{H+1} + \min \left\{ \beta - \alpha, \frac{1}{\pi h} \right\} \right) \left| \sum_{j=1}^N \e(h\xi_j) \right|.
\end{aligned}
\end{equation*}
\end{lemma}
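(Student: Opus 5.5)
This is the classical Erdős–Turán inequality with explicit constants, as in Montgomery's Ten Lectures. I will prove it by Fourier majorants and minorants, following the Beurling–Selberg construction. Let $\chi$ be the indicator of $[\alpha,\beta)$, viewed as a $1$-periodic function. Set $K=H+1$. I will use trigonometric polynomials $S^{\pm}$ of degree at most $H$ with
\[
S^-(x)\le \chi(x)\le S^+(x)\quad\text{for all } x,
\qquad \hat S^{\pm}(0)=\beta-\alpha\pm\frac{1}{H+1}.
\]
For $1\le |h|\le H$ they should also satisfy
\[
|\hat S^{\pm}(h)|\le \frac{1}{H+1}+\min\Bigl\{\beta-\alpha,\frac{1}{\pi |h|}\Bigr\}.
\]

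These polynomials come from Vaaler's construction. First take the Beurling function $B(x)$, of exponential type $2\pi$, which majorizes $\operatorname{sgn}(x)$ with $\int (B-\operatorname{sgn})=1$. Then form $B(K(x-\alpha))+B(K(\beta-x))$ and periodize. Because of the exponential type, the Fourier transform is supported in $|h|\le H$ after rescaling by $K=H+1$. The Fourier coefficients can be written as Vaaler's polynomial coefficients $\frac{1}{K}\hat{\ }$(Fejér kernel) plus the coefficients of the truncated sawtooth, $\hat\chi(h)\,\widehat{J}(h/K)$ with $|\widehat J|\le1$. Since $|\hat\chi(h)|=|\sin(\pi h(\beta-\alpha))|/(\pi|h|)\le\min\{\beta-\alpha,1/(\pi|h|)\}$, this gives the stated coefficient bound.

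Summing over the points, we get
\[
\#\{j:\xi_j\in[\alpha,\beta)\}\le \sum_j S^+(\xi_j)=N\hat S^+(0)+\sum_{0<|h|\le H}\hat S^+(h)\sum_j\e(h\xi_j).
\]
Subtracting $(\beta-\alpha)N$ leaves $\frac{N}{H+1}$ plus the error terms. Pairing $h$ with $-h$ produces the factor $2$ and the sum over $1\le h\le H$, because $|\sum_j\e(-h\xi_j)|=|\sum_j\e(h\xi_j)|$. The minorant gives the matching lower bound, which completes the proof.

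The main obstacle is the extremal-function step: getting the exact constant $1/(H+1)$ and the precise coefficient bound. Rather than rederive Vaaler's lemma, i.e. that $\operatorname{sgn}$ has majorant/minorant of type $2\pi$ with $L^1$ error exactly $1$ and with controlled transform, I would cite it directly, as the paper does by referring to Montgomery~\cite{Mont}.
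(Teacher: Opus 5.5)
Your proposal is correct and matches the paper. The paper gives no argument of its own and simply cites Montgomery's \emph{Ten Lectures}, whose proof is exactly this Selberg majorant/minorant construction built from Beurling's function and Vaaler's polynomial. There is one cosmetic slip: the majorant should be $\tfrac12\bigl(B(K(x-\alpha))+B(K(\beta-x))\bigr)$, because $\chi=\tfrac12\bigl(\operatorname{sgn}(x-\alpha)+\operatorname{sgn}(\beta-x)\bigr)$, and it is this normalisation that gives $\hat S^+(0)=\beta-\alpha+1/K$ and Fejér contributions of size at most $1/K$.
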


For the proof  of Theorem~\ref{thm:discrk}, we use Lemma~\ref{lem:Discr} in the following form, 
which is also known as  Erd\H{o}s--Tur\'{a}n inequality; see, for instance,~\cite[Theorem~1.21]{DrTi}. 

\begin{cor}
\label{cor:ET}
For any integer $H\ge 1$, we have
$$
\sup_{0 \le \alpha < \beta \le 1} \abs{D(\alpha, \beta)}
\ll \frac{N}{H} + \sum_{h=1}^H 
 \frac{1}{h}  
\abs{ \sum_{j=1}^N\e(h\xi_j)}.
$$
\end{cor}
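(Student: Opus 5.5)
Corollary~\ref{cor:ET} follows from Lemma~\ref{lem:Discr} by bounding the weight attached to each frequency $h$ and taking the supremum over $\alpha,\beta$. Since Lemma~\ref{lem:Discr} holds for every pair $0\le\alpha<\beta\le 1$ with the same integer $H$, it suffices to bound its right-hand side by a quantity that does not depend on $\alpha,\beta$.

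The first step is to estimate, for $1\le h\le H$, the coefficient
\[
\frac{1}{H+1}+\min\Bigl\{\beta-\alpha,\frac{1}{\pi h}\Bigr\}.
\]
Since $h\le H<H+1$, we have $1/(H+1)<1/h$. Dropping the first entry of the minimum gives $\min\{\beta-\alpha,1/(\pi h)\}\le 1/(\pi h)$. Hence the coefficient is at most $(1+1/\pi)/h$, uniformly in $\alpha$ and $\beta$.

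Substituting this into Lemma~\ref{lem:Discr}, and using $N/(H+1)\le N/H$, gives
\[
\abs{D(\alpha,\beta)}\le \frac{N}{H}+2\Bigl(1+\frac1\pi\Bigr)\sum_{h=1}^H\frac1h\abs{\sum_{j=1}^N\e(h\xi_j)}.
\]
The right-hand side is independent of $(\alpha,\beta)$, so taking the supremum yields the corollary with an absolute implied constant, for instance $c=2+2/\pi$.

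There is no real obstacle here. The only point needing care is that the bound on the coefficient must be uniform in $\alpha,\beta$, and this is ensured by discarding the term $\beta-\alpha$ from the minimum.
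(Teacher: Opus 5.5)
Your proposal is correct and matches the paper's route: the paper gives no separate proof, presenting the corollary as a direct consequence of Lemma~\ref{lem:Discr} (with a pointer to Drmota--Tichy). Bounding each weight $\frac{1}{H+1}+\min\{\beta-\alpha,\frac{1}{\pi h}\}$ by $(1+1/\pi)/h$ uniformly in $\alpha,\beta$, and then taking the supremum, is exactly that deduction.
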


To apply Corollary~\ref{cor:ET} in our setting, we need good bounds for the exponential sums $S(h,n)$ defined in~\eqref{eq:Shn}.

\subsection{Pointwise bounds for exponential sums}

Consider the function
$$
f_h(x) =   h\sqrt{x}
$$
and observe that 
$$
f_h'(x) =  \frac{h}{2 x^{1/2}} \mand f_h''(x)  = - \frac{h}{4 x^{3/2}}.
$$

Since for $|h| \le n^{1/2}$ we have 
$$
\frac{|h|}{4 n^{1/2}}  \le |f_h'(x)| \le \frac{1}{\sqrt{2}} \le 1- \frac{|h|}{4 n^{1/2}}, \quad x \sim n, 
$$
by~\cite[Corollary~8.11]{IwKow} (with $\vartheta  = \frac{1}{4} |h| n^{-1/2}$)  we instantly obtain the following bound on  exponential  sums~\eqref{eq:Shn}. 

\begin{lemma}
\label{lem:vdCorput-1} For any integer $h$ with $1 \le |h| \le n^{1/2}$  we have 
$$
S(h, n)  \ll |h|^{-1} n^{1/2}.
$$
\end{lemma}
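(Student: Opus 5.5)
This is the first-derivative (Kusmin--Landau) bound, so the plan is to check the hypotheses of \cite[Corollary~8.11]{IwKow} and read off the result. We may assume $h \ge 1$: since $S(-h,n)=\overline{S(h,n)}$, the case $h<0$ follows by conjugation.

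The function $f_h(x)=h\sqrt{x}$ has
\[
f_h'(x)=\frac{h}{2x^{1/2}}\mand f_h''(x)=-\frac{h}{4x^{3/2}}<0,
\]
so $f_h'$ is monotone (decreasing) on the interval $n/2\le x\le n$. For $x\sim n$ we have $x^{1/2}\le n^{1/2}$, hence
\[
f_h'(x)\ge \frac{h}{2n^{1/2}}\ge \frac{h}{4n^{1/2}}.
\]
For the upper bound, $x\ge n/2$ and $h\le n^{1/2}$ give
\[
f_h'(x)\le \frac{h}{2(n/2)^{1/2}}=\frac{h}{\sqrt{2}\,n^{1/2}}\le \frac{1}{\sqrt 2}.
\]
Set $\vartheta=\tfrac14 h n^{-1/2}\le \tfrac14$. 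Then $1-\vartheta\ge 3/4>1/\sqrt2$, so
\[
\vartheta\le f_h'(x)\le 1-\vartheta \qquad (x\sim n).
\]
Thus $f_h'$ stays at distance at least $\vartheta$ from the integers throughout the range of summation.

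The Kusmin--Landau lemma in the form \cite[Corollary~8.11]{IwKow} states that if $f'$ is monotone and $\|f'(x)\|\ge\vartheta>0$ on an interval $I$, then $\sum_{a\in I}\e(f(a))\ll \vartheta^{-1}$, with an absolute implied constant. Applying it to $I=[n/2,n]$ gives
\[
S(h,n)\ll \vartheta^{-1}=4h^{-1}n^{1/2}.
\]

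The only real point to verify is that the derivative avoids integers uniformly. This needs the restriction $|h|\le n^{1/2}$, which keeps $f_h'\le 1/\sqrt2<1$, together with the dyadic restriction $a\ge n/2$, which prevents $f_h'$ from growing too large at the left endpoint. Both have been checked above, so there is no genuine obstacle.
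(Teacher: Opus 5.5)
Your proof is correct and matches the paper's argument: both check $\vartheta\le |f_h'(x)|\le 1-\vartheta$ for $x\sim n$ with $\vartheta=\tfrac14|h|n^{-1/2}$, then apply the Kusmin--Landau bound \cite[Corollary~8.11]{IwKow}. Reducing to $h>0$ by complex conjugation, rather than working with $|f_h'|$ as the paper does, is only a cosmetic difference.
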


To estimate the sums $S(h, n)$ for larger values of $h$ we observe 
that 
$$
\frac{|h|}{4n^{3/2}}\le |f_h''(x)| \le  \frac{|h|}{\sqrt2 n^{3/2}}  \quad x \sim n, 
$$
and appeal to~\cite[Corollary~8.13]{IwKow}  (with $\Lambda = \frac{1}{4} |h| n^{-3/2}$ 
and $\eta  =  2^{3/2}$),  which implies

\begin{lemma}
\label{lem:vdCorput-2} For any integer $h\ne 0$ we have 
$$
S(h, n)  \ll |h|^{1/2} n^{1/4} + |h|^{-1/2} n^{3/4}.
$$
\end{lemma}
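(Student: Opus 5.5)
The bound in Lemma~\ref{lem:vdCorput-2} is the standard second-derivative (van der Corput) estimate. I would deduce it directly from \cite[Corollary~8.13]{IwKow}, as the text indicates. In the form I recall it, that corollary says the following. Let $f$ be a real function on an interval $I$ of length $|I|$, and suppose $\Lambda\le |f''(x)|\le \eta\Lambda$ on $I$. Then
\[
\sum_{a\in I}\e(f(a))\ll \eta\,|I|\,\Lambda^{1/2}+\Lambda^{-1/2},
\]
with an absolute implied constant.

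\textbf{Step 1: the interval.} Since $S(h,n)$ runs over $a\sim n$, we take $I=[n/2,n]$, so $|I|\asymp n$.

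\textbf{Step 2: the second derivative.} Let $f_h(x)=h\sqrt x$, so $f_h''(x)=-h/(4x^{3/2})$. For $x\in[n/2,n]$ this gives
\[
\frac{|h|}{4n^{3/2}}\le |f_h''(x)|\le \frac{2^{3/2}|h|}{4n^{3/2}},
\]
which is the displayed bound, since $2^{3/2}/4=1/\sqrt2$. So the corollary applies with $\Lambda=\tfrac14|h|n^{-3/2}$ and $\eta=2^{3/2}$, an absolute constant. If $h<0$, one first takes complex conjugates, since $|S(-h,n)|=|S(h,n)|$, so we may assume $h>0$.

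\textbf{Step 3: substitution.} Plugging in gives
\[
n\,\Lambda^{1/2}\asymp n\cdot |h|^{1/2}n^{-3/4}=|h|^{1/2}n^{1/4}
\mand
\Lambda^{-1/2}\asymp |h|^{-1/2}n^{3/4},
\]
which is exactly the claimed bound.

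\textbf{Main obstacle.} There is no real obstacle beyond matching the normalisation of \cite[Corollary~8.13]{IwKow}. If that corollary is instead stated for an interval of length $N$ with the second derivative of size $\asymp\Lambda$, in the form $\ll N\Lambda^{1/2}+\Lambda^{-1/2}$, the computation is identical.

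If one prefers a self-contained argument, I would take the usual route. Split $[n/2,n]$ into blocks on which $f_h'$ varies by at most $1$; there are about $n\Lambda+1$ of them. On blocks where $\|f_h'\|$ stays at least $\delta$ away from an integer, apply the Kusmin--Landau first-derivative bound, which gives $\ll 1/\delta$ per block. Bound the remaining stretches near integer values of $f_h'$, each of length about $\delta/\Lambda$, trivially. Finally choose $\delta=\Lambda^{1/2}$, which yields $(n\Lambda+1)\Lambda^{-1/2}$, the same bound as above.
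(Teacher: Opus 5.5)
Your proposal is correct and follows the paper's argument: apply \cite[Corollary~8.13]{IwKow} to $f_h(x)=h\sqrt x$ on $[n/2,n]$ with $\Lambda=\frac14|h|n^{-3/2}$ and $\eta=2^{3/2}$, and substitute to obtain $|h|^{1/2}n^{1/4}+|h|^{-1/2}n^{3/4}$. The self-contained Kusmin--Landau sketch with $\delta=\Lambda^{1/2}$ is also sound, but it is not needed.
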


We recall that there is a very rich family of bounds of the type of Lemma~\ref{lem:vdCorput-2}, 
appealing to higher derivatives, 
see~\cite{H-B, Rob, RoSa1, RoSa2, RoSa3, Sarg1,Sarg2} for such bounds and~\cite{Bord, Halup}
for their applications, as well  as the references therein. However, these more advanced bounds 
 do not seem to improve our results.

Finally, we also appeal to the theory of exponent pairs. 
Note that in the terminology of~\cite[Chapter~8]{IwKow}, which we use here, 
an exponent pair $(\kappa,\lambda)$ corresponds to the
exponent pair $(\kappa,\lambda+1/2)$ in the 
terminology of~\cite{GrKol,Huxley1,Mont}.

We note that for the  functions $f(x) = h\sqrt{x}$ the condition~\cite[Equation~(8.55)]{IwKow} is  satisfied with $F = h n^{1/2}$, while for~\cite[Equation~(8.56)]{IwKow}  we need $|h| \ge n^{1/2}$.
Hence, the bound~\cite[Equation~(8.58)]{IwKow} yields

\begin{lemma}
\label{lem:ExpPair}
Let $(\kappa,\lambda)$  be an exponent pair.  
Then for any integer $h$ with  $|h| \ge n^{1/2}$ we have
$$
\abs{S(h, n)} \le  |h|^{\kappa + o(1)}n^{1/2+\lambda - \kappa/2} 
$$
where the implied constant depends only on $\kappa$ and $\lambda$.
\end{lemma}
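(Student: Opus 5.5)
This is a direct specialisation of the exponent pair estimate \cite[Equation~(8.58)]{IwKow} to $f(x)=h\sqrt{x}$ on the dyadic range $x\sim n$. We need to check the hypotheses and then translate the general bound into the stated form. By the symmetry $S(-h,n)=\overline{S(h,n)}$ we may assume $h\ge n^{1/2}$.

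\textbf{Checking the hypotheses.} In the normalisation of \cite[Chapter~8]{IwKow}, an exponent pair $(\kappa,\lambda)$ controls sums $\sum_{a\sim N}\e(f(a))$ where $f$ has derivatives of size $|f^{(j)}(x)|\asymp F N^{-j}$ for all orders $j$ up to a bound depending on $(\kappa,\lambda)$. This is condition \cite[Equation~(8.55)]{IwKow}. The auxiliary condition \cite[Equation~(8.56)]{IwKow} essentially requires $F\ge N$.

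For $f(x)=h\sqrt x$ we compute
\[
f^{(j)}(x)=c_j\,h\,x^{1/2-j},
\]
where $c_j=\tfrac12(\tfrac12-1)\cdots(\tfrac12-j+1)\neq 0$. On $x\sim n$ this gives $|f^{(j)}(x)|\asymp_j hn^{1/2}\cdot n^{-j}$. So \cite[Equation~(8.55)]{IwKow} holds with $N=n$ and $F=hn^{1/2}$, and the implied constants depend only on $j$, hence only on $(\kappa,\lambda)$.

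The condition $F\ge N$ becomes $hn^{1/2}\ge n$, which is exactly the assumption $h\ge n^{1/2}$.

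\textbf{Applying the bound.} The estimate \cite[Equation~(8.58)]{IwKow} then gives
\[
S(h,n)\ll (F/N)^{\kappa}N^{\lambda}\cdot N^{1/2},
\]
up to a factor $F^{o(1)}$ or a logarithm. Here the extra $N^{1/2}$ reflects that \cite{IwKow} uses the shifted convention $\lambda\mapsto\lambda+1/2$ relative to \cite{GrKol,Huxley1,Mont}, as the paper noted.

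Substituting $F/N=hn^{-1/2}$ and $N=n$ gives
\[
h^{\kappa}n^{-\kappa/2}\,n^{\lambda+1/2}=h^{\kappa}n^{1/2+\lambda-\kappa/2}.
\]
Any logarithmic or $F^{\varepsilon}$ loss is absorbed into $h^{o(1)}$. This uses $F=hn^{1/2}\le h^{2}$, which holds because $h\ge n^{1/2}$.

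\textbf{Main obstacle.} The only real difficulty is normalisation bookkeeping. One has to match exactly which convention for $\lambda$ \cite{IwKow} uses, and whether their bound is stated as $(F/N)^\kappa N^{\lambda}$ or in the equivalent form $F^{\kappa}N^{\lambda-\kappa}$. The plan is to verify the formula on the trivial pair $(0,1/2)$, which must give $S\ll n$, and on the van der Corput pair $(1/2,0)$, which must give $h^{1/2}n^{1/4}$ and so agree with Lemma~\ref{lem:vdCorput-2} in the range $h\ge n^{1/2}$. Once both checks pass, the convention is pinned down.
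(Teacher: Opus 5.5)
Your proposal is correct and is essentially the paper's own argument. The paper likewise checks Iwaniec--Kowalski's condition (8.55) for $f(x)=h\sqrt{x}$ with $F=hn^{1/2}$ and $N\asymp n$, notes that condition (8.56) amounts to $|h|\ge n^{1/2}$, and reads the bound off from (8.58); your sanity checks with the pairs $(0,1/2)$ and $(1/2,0)$, which recover the trivial bound and Lemma~\ref{lem:vdCorput-2}, are a sensible way to pin down the shifted normalisation of $\lambda$.
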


We observe that the statement of Lemma~\ref{lem:ExpPair} is useful only if we have examples of exponent pairs which are better than the trivial pair $(\kappa,\lambda) = (0,1/2)$. However, such non-trivial pairs are indeed well-known, and we refer to an outline of the most significant results given by Bourgain~\cite{Bour}, as well as to the new exponent pair
\begin{equation}
\label{eq:BourgPair}
    (\kappa, \lambda) = \left( \frac{13}{84}, \frac{13}{84} \right)
\end{equation} 
given in~\cite[Theorem~6]{Bour}.

We now make a simplifying assumption that we are given an exponent pair with 
$\kappa = \lambda$, which actually holds for many of the most interesting exponent pairs, 
including~\eqref{eq:BourgPair}.

Combining Lemmas \ref{lem:vdCorput-1}, \ref{lem:vdCorput-2} and~\ref{lem:ExpPair}, we shall use the following convenient upper envelope.

Let $(\kappa, \kappa)$ be an exponent pair. Then for any non-zero integer $h$ we have 
\begin{equation}
\label{eq:OptBound}
\abs{S(h, n)} \le
 \begin{cases} 
 |h|^{-1} n^{1/2}& \text{if}\ |h| \le n^{1/2},\\
  |h|^{1/2} n^{1/4}& \text{if}\   n^{1/2} <  |h|\le n^{(1+2 \kappa)/(2(1- 2\kappa))}, \\ 
|h|^{\kappa + o(1)}n^{1/2+\kappa/2}& \text{if}\    |h| >  n^{(1+2 \kappa)/(2(1-2\kappa))}.
\end{cases}
\end{equation}

Substituting the exponent pair~\eqref{eq:BourgPair} in Lemma~\ref{lem:ExpPair} we see
that for any  $h$ with  $|h| \ge n^{1/2}$ we have
$$
\abs{S(h, n)} \le |h|^{13/84 + o(1)}n^{97/168 } . 
$$

\subsection{An elementary second moment bound}
In this subsection, we establish an elementary bound for the second moment $M_2(H,n)$ defined in~\eqref{eq:def_M}.

We first record two auxiliary lemmas.

\begin{lemma}
Let $q\ge 1$ and $K\in\mathbb Z$. Define
\begin{equation}
\label{eq:def_rho}
    \rho_q(K)
 =
 \#\{x\bmod q: x^2\equiv K \pmod q\}.
\end{equation}
Then, for every fixed $\varepsilon>0$,
\begin{equation}
\label{eq:pho_q}
\rho_q(K)\ll_\varepsilon q^\varepsilon (K,q)^{1/2}.
\end{equation} 
Moreover, for every $R\ge 0$,
\begin{equation}
\label{eq:sum_pho}
    \sum_{0<|K|\le R}\rho_q(K)
 \ll_\varepsilon q^\varepsilon(1+R).
\end{equation}
\end{lemma}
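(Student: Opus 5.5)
By the Chinese remainder theorem, $\rho_q$ is multiplicative in $q$: $\rho_q(K)=\prod_{p^e\| q}\rho_{p^e}(K)$. Since the number of prime factors of $q$ is $O(\log q/\log\log q)$, a bound $\rho_{p^e}(K)\le C\,(K,p^e)^{1/2}$ with an absolute constant $C$ gives $\rho_q(K)\le C^{\omega(q)}(K,q)^{1/2}\ll_\varepsilon q^\varepsilon (K,q)^{1/2}$, which is \eqref{eq:pho_q}. So the task reduces to prime powers.

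Fix $p^e$ and write $K\equiv p^{j}u \pmod{p^e}$ with $p\nmid u$, $0\le j\le e$. Then $(K,p^e)=p^j$. There are two cases.

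\textbf{Case $j=e$,} that is $p^e\mid K$. The solutions are exactly the $x$ with $p^{\lceil e/2\rceil}\mid x$. There are $p^{e-\lceil e/2\rceil}=p^{\lfloor e/2\rfloor}\le p^{e/2}$ of them modulo $p^e$.

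\textbf{Case $j<e$.} A solution requires $j$ even, say $j=2i$, and $x=p^i y$. The congruence becomes $y^2\equiv u\pmod{p^{e-2i}}$, where $y$ is taken modulo $p^{e-i}$. The number of unit solutions of $y^2\equiv u$ modulo a power of $p$ is at most $2$ for odd $p$ by Hensel's lemma, and at most $4$ for $p=2$. Each residue modulo $p^{e-2i}$ lifts to $p^{i}$ residues modulo $p^{e-i}$. Hence $\rho_{p^e}(K)\le 4p^{i}=4p^{j/2}$.

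Together the two cases give $\rho_{p^e}(K)\le 4(K,p^e)^{1/2}$, as needed. The only delicate bookkeeping is this lifting count, especially at $p=2$, and I expect it to be the main (though minor) obstacle.

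For \eqref{eq:sum_pho}, I would insert \eqref{eq:pho_q} and group the $K$ by $d=(K,q)$:
\[
\sum_{0<|K|\le R}\rho_q(K)\ll q^{\varepsilon}\sum_{d\mid q} d^{1/2}\,\#\{0<|K|\le R:\ d\mid K\}\le q^{\varepsilon}\sum_{d\mid q,\ d\le R} d^{1/2}\cdot\frac{2R}{d}.
\]
Each term is at most $2R$, and the number of divisors of $q$ is $q^{o(1)}$, so the right-hand side is $\ll q^{2\varepsilon}R$. If $R<1$ the sum is empty. Renaming $\varepsilon$ gives $\ll_\varepsilon q^\varepsilon(1+R)$.
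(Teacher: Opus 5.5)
Your proof is correct and follows the paper's argument essentially step for step: multiplicativity via the Chinese remainder theorem, the prime-power count split according to whether $p^e\mid K$ (with the substitution $x=p^iy$, at most $4$ unit square roots and $p^i$ lifts in the other case), the bound $C^{\omega(q)}\ll_\varepsilon q^\varepsilon$, and a divisor decomposition for the sum over $K$. The only cosmetic difference is that you group $K$ by the exact value of $(K,q)$ and bound each term by $2R$ times the divisor count. The paper instead uses $(K,q)^{1/2}\le\sum_{d\mid (K,q)}d^{1/2}$ together with $\sum_{d\mid q}d^{-1/2}\ll_\varepsilon q^\varepsilon$.
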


\begin{proof}
We first prove the pointwise bound. Write
\[
 q=\prod_{p^\nu\Vert q}p^\nu .
\]
By the Chinese remainder theorem,
\[
 \rho_q(K)=\prod_{p^\nu\Vert q}\rho_{p^\nu}(K).
\]
Thus it is enough to estimate $\rho_{p^\nu}(K)$.
Fix $p^\nu\Vert q$, and put
\[
 t=\min\{v_p(K),\nu\}.
\]

First suppose that $K\equiv 0\pmod {p^\nu}$. Then
\[
 x^2\equiv 0\pmod {p^\nu}
\]
is equivalent to
\[
 v_p(x)\ge \left\lceil \frac{\nu}{2}\right\rceil.
\]
Hence the number of solutions $x \bmod p^\nu$ is
\[
 p^{\nu-\lceil \nu/2\rceil}
 =
 p^{\lfloor \nu/2\rfloor}
 \le p^{\nu/2}.
\]
Since in this case $(K,p^\nu)=p^\nu$, we get
\begin{equation}
\label{eq:rho_p_nu_1}
    \rho_{p^\nu}(K)\le (K,p^\nu)^{1/2}.
\end{equation}

Now suppose that $K\not\equiv 0\pmod {p^\nu}$. Then
$t=v_p(K)<\nu$. If
\[
 x^2\equiv K\pmod {p^\nu},
\]
then $v_p(x^2)=v_p(K)=t$, and therefore $t$ must be even.  
Thus we may assume that
\[
 t=2s.
\]
Write
\[
 K=p^{2s}K_0,\qquad p\nmid K_0.
\]
Every solution must have the form
\[
 x=p^s y,\qquad p\nmid y.
\]
Substituting into the congruence gives
\[
 p^{2s}y^2\equiv p^{2s}K_0\pmod {p^\nu},
\]
or equivalently
\[
 y^2\equiv K_0\pmod {p^{\nu-2s}}.
\]
Since $K_0$ is coprime to $p$, the congruence 
$y^2\equiv K_0\pmod {p^r}$
 has at most 2 solutions for odd $ p$
 and at most  4  solutions for $p=2$. Each such solution gives $p^s$ possibilities
for $x\bmod p^\nu$. Hence
\[
 \rho_{p^\nu}(K)\ll p^s=p^{t/2}.
\]
Since $p^t=(K,p^\nu)$, this gives
\begin{equation}
\label{eq:rho_p_nu_2}
    \rho_{p^\nu}(K)\ll (K,p^\nu)^{1/2}.
\end{equation}

Combining \eqref{eq:rho_p_nu_1} and \eqref{eq:rho_p_nu_2}, we have
\[
 \rho_{p^\nu}(K)\ll (K,p^\nu)^{1/2},
\]
with an absolute implied constant. Multiplying over all prime powers
dividing $q$, we obtain
\[
 \rho_q(K)
 \ll
 C^{\omega(q)}
 \prod_{p^\nu\Vert q}(K,p^\nu)^{1/2}
 =
 C^{\omega(q)}(K,q)^{1/2}.
\]
Since
\[
 C^{\omega(q)}\ll_\varepsilon q^\varepsilon,
\]
the \eqref{eq:pho_q} follows.

It remains to prove \eqref{eq:sum_pho}. By \eqref{eq:pho_q},
\[
 \sum_{0<|K|\le R}\rho_q(K)
 \ll_\varepsilon
 q^\varepsilon
 \sum_{0<|K|\le R}(K,q)^{1/2}.
\]
If $R<1$, the sum is empty. Thus we may assume $R\ge 1$. Since
\[
 (K,q)^{1/2}
 \le
 \sum_{\substack{d\mid q\\ d\mid K}}d^{1/2},
\]
we get
\[
 \sum_{0<|K|\le R}(K,q)^{1/2}
 \le
 \sum_{d\mid q}d^{1/2}
 \#\{0<|K|\le R:d\mid K\}.
\]
But
\[
 \#\{0<|K|\le R:d\mid K\}\ll \frac{R}{d}.
\]
Therefore
\[
 \sum_{0<|K|\le R}(K,q)^{1/2}
 \ll
 R\sum_{d\mid q}d^{-1/2}
 \ll_\varepsilon Rq^\varepsilon.
\]
Absorbing the extra factor $q^\varepsilon$ into the implied constant
by replacing $\varepsilon$ with $\varepsilon/2$, we obtain
\[
 \sum_{0<|K|\le R}\rho_q(K)
 \ll_\varepsilon q^\varepsilon R
 \ll_\varepsilon q^\varepsilon(1+R).
\]
This completes the proof.
\end{proof}

\begin{lemma}[Near-collisions of square roots]
\label{lem:near-collision-sqrt}
Let $0<\eta\le 1/2$, and put
\[
 A_2(n,\eta)
 =
 \#\left\{(a,b): a,b\sim n,\ 
 \left\|\sqrt a-\sqrt b\right\|\le \eta\right\}.
\]
Then, for every fixed $\varepsilon>0$,
\[
 A_2(n,\eta)\ll_\varepsilon n^\varepsilon\left(n+n^2\eta\right).
\]
\end{lemma}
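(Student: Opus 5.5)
Since $a,b\sim n$ we have $|\sqrt a-\sqrt b|\le \sqrt n$. Hence $\|\sqrt a-\sqrt b\|\le\eta$ means that $|\sqrt a-\sqrt b-m|\le\eta$ for some integer $m$ with $|m|\ll\sqrt n$. The diagonal $a=b$ (which is the term $m=0$ with $a=b$) contributes $\ll n$. It therefore suffices to count, for each $m$ with $0\le |m|\ll \sqrt n$, the off-diagonal pairs with $\sqrt a=\sqrt b+m+\theta$, $|\theta|\le\eta$, and to sum over $m$. The plan is to turn the near-collision condition into an approximate algebraic relation that can be counted with the divisor-type bound \eqref{eq:sum_pho}.

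\textbf{Squaring.} Fix $m\ne0$; by symmetry take $m>0$, so $a>b$. Squaring $\sqrt a=\sqrt b+m+\theta$ gives
\[
a-b-m^2 = 2m\sqrt b + O(\eta\sqrt n),
\]
using $|m+\theta|\sqrt b\ll \eta\sqrt n$ for the cross term $2\theta\sqrt b$ and $\theta^2\le\eta$. Put $c=a-b-m^2$, an integer. Squaring once more yields
\[
c^2-4m^2b = O(\eta\sqrt n\cdot m\sqrt n)=O(m\eta n).
\]
Set $K=c^2-4m^2b$. Then $c^2\equiv K \pmod{4m^2}$ and $|K|\le R:=Cm\eta n$. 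Conversely, each pair $(c,K)$ determines $b=(c^2-K)/(4m^2)$ and then $a=b+m^2+c$. Moreover $c\asymp m\sqrt n$ lies in an interval of length $O(m\sqrt n)$, which covers $O(\sqrt n/m)$ complete residue systems modulo $4m^2$ once $m\le\sqrt n$.

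\textbf{Counting.} For fixed $m$, the number of pairs is at most
\[
\sum_{|K|\le R}\#\{c\ \text{in range}: c^2\equiv K \pmod{4m^2}\}\ll \Bigl(1+\frac{\sqrt n}{m}\Bigr)\sum_{|K|\le R}\rho_{4m^2}(K).
\]
By \eqref{eq:sum_pho}, the terms with $K\ne0$ contribute $\ll n^{\varepsilon}(\sqrt n/m)(1+m\eta n)$. The term $K=0$ needs separate care. Here $c=2m\sqrt b$ exactly, so $b$ must be a perfect square, giving $O(\sqrt n)$ choices of $b$; each such $b$ determines $c$ and hence $a$. So $K=0$ contributes $O(\sqrt n)$ for each $m$, and $O(n)$ in total over $m\ll\sqrt n$, which is acceptable.

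\textbf{Summing over $m$.} Summing the $K\ne0$ contribution over $1\le m\ll\sqrt n$ gives
\[
n^{\varepsilon}\sum_m\Bigl(\frac{\sqrt n}{m}+\eta n^{3/2}\Bigr)\ll n^{\varepsilon}\bigl(\sqrt n\log n+\eta n^2\bigr).
\]
Together with the diagonal and the $K=0$ terms, this is $\ll n^{\varepsilon}(n+n^2\eta)$.

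The case $m=0$ with $a\ne b$ is handled directly. Here $|\sqrt a-\sqrt b|\le\eta$ forces $|a-b|\ll\eta\sqrt n<1$ when $\eta<n^{-1/2}$, and otherwise allows at most $\ll\eta\sqrt n$ values of $a$ for each $b$. This gives $O(n+\eta n^{3/2})$, which is within the bound.

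The main obstacle is making the bookkeeping in the counting step rigorous. One has to check the claim that $c$ ranges over $O(1+\sqrt n/m)$ complete residue classes modulo $4m^2$; this needs $m\ll\sqrt n$, where the $+1$ is harmless. One also has to keep the $K=0$ term and the sign conventions for $m$ and $c$ under control. The bound \eqref{eq:sum_pho} does the arithmetic work, and the losses are only $n^{\varepsilon}$ and a logarithm.
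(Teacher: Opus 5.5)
Your proposal is correct and is essentially the paper's proof. It uses the same substitution $c=a-b-m^2$ (the paper's $u$), the same quantity $K=c^2-4m^2b$ with $|K|\ll m\eta n$ and $c^2\equiv K \pmod{4m^2}$, the same count of $O(1+\sqrt n/m)$ residue systems, and the same application of \eqref{eq:sum_pho} summed over $1\le m\ll\sqrt n$. The only difference is cosmetic: you handle $K=0$ inside the $m$-loop by noting $b$ must be a square, while the paper removes the exact collisions $\sqrt a-\sqrt b\in\mathbb Z$ at the outset, which is the same observation.

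There is one small slip in the first squaring step. The error there is $\theta(2\sqrt b+2m+\theta)\ll\eta\sqrt n$, not ``$|m+\theta|\sqrt b\ll\eta\sqrt n$''. Your conclusion $c=2m\sqrt b+O(\eta\sqrt n)$ is nonetheless correct.
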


\begin{proof}
We first dispose of the exact integer collisions. Suppose that
\[
 \sqrt a-\sqrt b\in \mathbb Z.
\]
If $\sqrt a-\sqrt b=0$, then $a=b$, which gives $O(n)$ pairs.
If $\sqrt a-\sqrt b=m\neq 0$, then
\[
 a=b+m^2+2m\sqrt b,
\]
and hence $\sqrt b\in \mathbb Q$. Since $b$ is an integer, this
implies that $b$ is a square, and then $a$ is also a square. Thus
$a=s^2$, $b=t^2$, with $s,t\ll \sqrt n$, which gives $O(n)$
such pairs. Therefore the contribution of exact integer collisions is $O(n).$

It remains to count the pairs for which
\[
 0<\left\|\sqrt a-\sqrt b\right\|\le \eta.
\]
Choose an integer $m$ and a real number $\xi$ such that
\[
 \sqrt a-\sqrt b=m+\xi,\qquad 0<|\xi|\le \eta.
\]
Since $a,b\sim n$, we have $|m|\ll \sqrt n$. By symmetry between
$a$ and $b$, it is enough to consider $m\ge 0$.

If $m=0$, then
\[
 |\sqrt a-\sqrt b|\le \eta
\]
implies
\[
 |a-b|\le \eta(\sqrt a+\sqrt b)\ll \eta\sqrt n.
\]
Hence the number of such pairs is
\begin{equation}
\label{eq:m=0}
     \ll n(1+\eta\sqrt n)\ll n+n^2\eta.
\end{equation}

Now assume that $1\le m\ll \sqrt n$. Define
\[
 u=a-b-m^2.
\]
Then
\[
 a-(\sqrt b+m)^2
 =
 u-2m\sqrt b
 =
 (\sqrt a-\sqrt b-m)(\sqrt a+\sqrt b+m),
\]
and therefore
\[
 |u-2m\sqrt b|\ll \eta\sqrt n.
\]
In particular, $u$ is restricted to an interval of length $O(m\sqrt n)$, and
\[
 |u|\ll m\sqrt n.
\]

Put
\[
 K=u^2-4m^2b.
\]
Note that
$K\ne 0$, since $K=0$ would imply $u=2m\sqrt b$, and hence $\sqrt a-\sqrt b=m$, which belongs to the
exact integer collision case already discussed.

\[
 |K|
 =
 |u-2m\sqrt b|\,|u+2m\sqrt b|
 \ll \eta mn.
\]
Moreover,
\[
 u^2\equiv K \pmod {4m^2}.
\]
Conversely, once $m,u,K$ are fixed and
\[
 u^2\equiv K \pmod {4m^2},
\]
the integer $b$, if it exists, is determined by
\[
 b=\frac{u^2-K}{4m^2},
\]
and then $a=b+m^2+u$ is also determined.

For a fixed $m\ge 1$, let $\mathcal U_m$ denote the set of possible integer
values of $u$. As observed above, $\mathcal U_m$ is contained in an 
interval of length $O(m\sqrt n)$. 
Therefore the number $N_m$ of admissible pairs corresponding to this fixed  $m$ is at most
\[
N_m
 \le
\sum_{0<|K|\le C\eta mn}
\#\{u\in\mathcal U_m: u^2\equiv K \pmod {4m^2}\}.
\]
For a fixed $K$, 
\begin{equation*}
    \begin{split}
\#\{u\in\mathcal U_m: u^2\equiv K \pmod {4m^2}\}
 &\ll
\left(1+\frac{|\mathcal U_m|}{4m^2}\right)\rho_{4m^2}(K)\\
 &\ll
\left(1+\frac{\sqrt n}{m}\right)\rho_{4m^2}(K),
    \end{split}
\end{equation*}
where $\rho$ is defined in \eqref{eq:def_rho}.
Hence, invoking \eqref{eq:sum_pho} with
$q=4m^2$ and $R=C\eta mn$, we obtain
\begin{equation}
\label{eq:Nm}
    \begin{split}
      N_m
 &\ll_\varepsilon
\left(1+\frac{\sqrt n}{m}\right)
\sum_{0<|K|\le C\eta mn}\rho_{4m^2}(K)\\
 &\ll_\varepsilon
m^\varepsilon
\left(1+\frac{\sqrt n}{m}\right)
(1+\eta mn).  
    \end{split}
\end{equation}

Summing \eqref{eq:Nm} over $1\le m\ll \sqrt n$, we get
\[
\sum_{1\le m\ll \sqrt n} N_m
\ll_\varepsilon
\sum_{1\le m\ll \sqrt n}
m^\varepsilon
\left(1+\frac{\sqrt n}{m}\right)(1+\eta mn)
\ll_\varepsilon n^\varepsilon(n+n^2\eta).
\]
Combining this with the $m=0$ contribution \eqref{eq:m=0} and with the exact
integer collisions gives
\[
 A_2(n,\eta)\ll_\varepsilon n^\varepsilon(n+n^2\eta),
\]
as required.
\end{proof}

\begin{lemma}[Second moment]
\label{lem:M2Hn}
For every $H\ge 2$ and every fixed $\varepsilon>0$, we have
\[
 M_2(H,n)
 \ll_\varepsilon
n^\varepsilon\left(Hn+n^2\log(2H)\right)
\]
where 
$M_2(H,n)$ is given by \eqref{eq:def_M}.
Equivalently,
\[
 M_2(H,n)\ll (Hn+n^2 \log(2H))n^{o(1)}.
\]
\end{lemma}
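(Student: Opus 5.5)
Expanding the square and summing over $h$ first gives
\[
 M_2(H,n)=\sum_{a,b\sim n}\ \sum_{h\sim H}\e\bigl(h(\sqrt a-\sqrt b)\bigr).
\]
The inner sum is a geometric progression over an interval of integers of length $O(H)$. By the standard bound for such sums,
\[
 \Bigl|\sum_{h\sim H}\e(h\theta)\Bigr|\ll \min\bigl\{H,\ \|\theta\|^{-1}\bigr\}.
\]
Thus
\[
 M_2(H,n)\ll \sum_{a,b\sim n}\min\bigl\{H,\ \|\sqrt a-\sqrt b\|^{-1}\bigr\},
\]
and everything reduces to counting near-collisions. For that we use Lemma~\ref{lem:near-collision-sqrt}.

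We split the pairs $(a,b)$ dyadically according to the size of $\eta=\|\sqrt a-\sqrt b\|$.

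\emph{Close pairs.} For pairs with $\eta\le 1/H$ we use the bound $H$. By Lemma~\ref{lem:near-collision-sqrt} with $\eta=1/H$, there are $\ll n^\varepsilon(n+n^2/H)$ such pairs. Their total contribution is therefore $\ll n^\varepsilon(Hn+n^2)$.

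\emph{Remaining pairs.} For each $j\ge 0$ with $2^j/H\le 1/2$, consider the shell
\[
 2^{j}/H<\eta\le 2^{j+1}/H .
\]
Each pair in this shell contributes $\ll H2^{-j}$. By Lemma~\ref{lem:near-collision-sqrt}, the shell contains $\ll n^\varepsilon(n+n^2 2^{j+1}/H)$ pairs. So the shell contributes
\[
 \ll n^\varepsilon\bigl(Hn2^{-j}+n^2\bigr).
\]
There are $O(\log (2H))$ values of $j$. Summing, the first terms form a geometric series giving $O(n^{1+\varepsilon}H)$, and the second terms give $O(n^{2+\varepsilon}\log(2H))$.

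Adding the two ranges yields
\[
 M_2(H,n)\ll_\varepsilon n^\varepsilon\bigl(Hn+n^2\log(2H)\bigr),
\]
which is the claim. The equivalent formulation with $n^{o(1)}$ is immediate.

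The proof has no real obstacle, since the arithmetic difficulty is already contained in Lemma~\ref{lem:near-collision-sqrt}. The only care needed is to keep the dyadic range restricted to $\eta\le 1/2$, where that lemma applies and where $\|\cdot\|\le 1/2$ holds automatically. One must also check that the $n^\varepsilon$ loss in the lemma is uniform in $\eta$, so that it passes through the sum over $j$ unchanged.
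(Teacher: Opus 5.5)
Your proposal is correct and follows essentially the same route as the paper: expand the square, bound the $h$-sum by $\min\{H,\|\sqrt a-\sqrt b\|^{-1}\}$, split the pairs into shells according to $\|\sqrt a-\sqrt b\|$, and apply Lemma~\ref{lem:near-collision-sqrt} to each shell. The only difference is cosmetic: the paper uses shells with ratio $e$, with the last threshold capped at $\eta_I=1/2$, where you use dyadic shells with the same truncation at $1/2$.
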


\begin{rem}
    This elementary estimate is most useful when $H \gg n$. In the range $n^{1/2+\delta}\ll H\ll n$, it will be superseded by the smoothing method of Section \ref{sec:smoothing}.
\end{rem}

\begin{proof}
The case $H<2$ is trivial, so we assume $H\ge 2$. Squaring out and
interchanging the order of summation, we get
\begin{equation}
\label{eq:MHn2}
    M_2(H,n) = \sum_{a,b \sim n} \sum_{h \sim H} \e \left( h (\sqrt{a} - \sqrt{b}) \right).
\end{equation}

Let $\eta_0=H^{-1}$, let $\eta_i=e^i/H$ for $1\le i<I$, where
$\eta_{I-1}<1/2\le e^I/H$, and set $\eta_I=1/2$.

By the well-known bound 
\begin{equation*}
    \left| \sum_{h \sim H} \e(\alpha h) \right| \ll \min\left\{H, \frac{1}{\lVert \alpha \rVert}\right\},
\end{equation*}
see~\cite[Equation~(8.6)]{IwKow}, we derive from~\eqref{eq:MHn2} that 
\begin{equation*}
\begin{split}
    M_2(H,n) 
    &\ll \sum_{\substack{a,b \sim n \\ \lVert \sqrt{a} - \sqrt{b} \rVert \le \eta_0 }} H + \sum_{i=1}^I \sum_{\substack{a,b \sim n \\ \eta_{i-1} \le \lVert \sqrt{a} - \sqrt{b} \rVert \le \eta_i }} \eta_{i-1}^{-1} \\
    &\le A_2(n, \eta_0) H + \sum_{i=1}^I A_2(n, \eta_i) \eta_{i-1}^{-1}.
\end{split}
\end{equation*}

Using Lemma \ref{lem:near-collision-sqrt}, we now derive
\[
 M_2(H,n)
 \ll_\varepsilon
n^\varepsilon\left(Hn+n^2\log H\right),
\]
which concludes the proof. 
\end{proof}

\subsection{A conjectural counting approach for higher moments}
We record a natural higher-dimensional analogue of Lemma~\ref{lem:near-collision-sqrt}. For $0<\eta\le 1/2$, define
\begin{equation*}
\begin{aligned}
    A_{2r}(n,\eta) := \# \Biggl\{ & (a_1, \dots, a_r, b_1, \dots, b_r) : a_i, b_j \sim n, \\
    & \left\lVert \sum_{i=1}^r \sqrt{a_i} - \sum_{j=1}^r \sqrt{b_j} \right\rVert \le \eta \Biggr\}.
\end{aligned}
\end{equation*}

Expanding $M_{2r}(H,n)$ in \eqref{eq:def_M}, we obtain
$$
    M_{2r}(H,n) = \sum_{\substack{a_1,\ldots,a_r\sim n\\ b_1,\ldots,b_r\sim n}} \sum_{h\sim H} \e\left( h\left( \sum_{i=1}^r \sqrt{a_i} - \sum_{j=1}^r \sqrt{b_j} \right) \right).
$$
Using
$$
    \left|\sum_{h\sim H}\e(h\alpha)\right| \ll \min\{H,\|\alpha\|^{-1}\},
$$
and arguing as in the proof of Lemma~\ref{lem:M2Hn}, we get the following reduction. Put
$$
    \eta_0=H^{-1},
$$
and let $\eta_i=e^i/H$ for $1\le i<I$, where $I$ is chosen so that
$$
    \eta_{I-1}<\frac12\le \frac{e^I}{H},
$$
and finally set $\eta_I=1/2$. Then
\begin{equation}
\label{eq:M2rviaA2r}
    M_{2r}(H,n) \ll H A_{2r}(n,H^{-1}) + \sum_{i=1}^{I} A_{2r}(n,\eta_i)\eta_{i-1}^{-1}.
\end{equation}

Thus higher even moments are reduced to bounding $A_{2r}(n,\eta)$. If the fractional parts of
$$
    \sum_{i=1}^r \sqrt{a_i}, \qquad a_i\sim n,
$$
behaved like $n^r$ random points in $[0,1)$, then the expected number of pairs at distance at most $\eta$ would be of order
$$
    n^r+n^{2r}\eta.
$$
This leads to the following conjecture.

\begin{conj}
\label{conj:A2r}
For every fixed integer $r\ge 1$ and every fixed $\varepsilon>0$,
uniformly for $0<\eta\le 1/2$, one has
\begin{equation}
\label{eq:A2r}
    A_{2r}(n,\eta)
    \ll_{r,\varepsilon}
    n^{r+\varepsilon}+n^{2r+\varepsilon}\eta .
\end{equation}
\end{conj}

Assuming Conjecture \ref{conj:A2r}, the reduction
\eqref{eq:M2rviaA2r} gives
\begin{equation}
\label{eq:M2rbdviaA2r}
    M_{2r}(H,n)
    \ll_{r,\varepsilon}
    Hn^{r+\varepsilon}
    +n^{2r+\varepsilon}\log(2H).
\end{equation}

\begin{rem}
The conjectural estimate \eqref{eq:A2r} is expected to be sharp up to
the factor $n^\varepsilon$. The term $n^r$ is forced by diagonal
solutions, for example by taking $(b_1,\ldots,b_r)$ to be a permutation
of $(a_1,\ldots,a_r)$. The term $n^{2r}\eta$ is also unavoidable at
the scale $\eta\asymp 1$.

However, the moment estimate \eqref{eq:M2rbdviaA2r} is not always the
strongest available bound. In particular, for $r\ge 2$ and $H\ll n$,
the pointwise bounds \eqref{eq:OptBound} may give
stronger estimates. This motivates the smoothing method developed in
Section \ref{sec:smoothing}.
\end{rem}

\section{Smoothing and dual moment estimates}
\label{sec:smoothing}

\subsection{Outline of the smoothing method}

The purpose of this section is to prove the moment estimates stated in Theorems~\ref{thm:M2-main} and~\ref{thm:M4-main}. We develop a smoothing method for
\[
    M_{2r}(H,n)=\sum_{h\sim H}|S(h,n)|^{2r}
\]
in the range $H\ge n^{1/2+\delta}$. We apply it unconditionally to $r=1$ and $r=2$. For higher moments, the same argument leads to a natural dual problem, which is discussed at the end of the section.

 Lemma~\ref{lem:M2Hn} is
sufficient when $H\gg n$, but it is not strong enough in the
intermediate range
\[
    n^{1/2+\delta}\ll H\ll n.
\]
The second-moment argument below treats this intermediate range and, together with Lemma~\ref{lem:M2Hn}, proves Theorem~\ref{thm:M2-main}. The fourth-moment argument proves Theorem~\ref{thm:M4-main} in the shorter range $H\le n^{2/3}$.

We now outline the mechanism of the proof. We replace the sharp sum
$S(h,n)$ by a smoothed sum $S_\omega(h,n)$ introduced in \eqref{eq:Swhn}, where the smoothing is
confined to two endpoint intervals of length $O(\omega)$. Applying
Poisson summation and stationary phase transforms $S_\omega(h,n)$ into
a dual quadratic sum of the form
\[
    S_\omega(h,n)
    =
    CB
    \sum_{m\asymp H/\sqrt n}
    V_{\omega;h,m}
    e\left(\frac{h^2}{4m}\right)
    +
    \text{acceptable error},
\]
with 
$$
B=\frac{H}{(H/\sqrt n)^{3/2}}.
$$
Thus the original moment problem is reduced to estimating weighted
dual moments for the phases
\[
    e\left(\frac{h^2}{4m}\right).
\]
The weights $V_{\omega;h,m}$ have uniformly bounded variation in the
$h$-aspect, and can therefore be removed by partial summation. In the
second moment case, the resulting dual sums are estimated by a
quadratic Weyl bound and an elementary denominator count. In the fourth
moment case, one also needs a reciprocal-energy estimate and a
reciprocal-denominator gcd sum.

\subsection{Smooth cut-offs and Poisson summation}

We begin by defining a smooth cut-off function with an adjustable edge length.

\begin{lemma}[Smooth cut-off with adjustable edge length] 
\label{lem:w def}
Let $n\geq 1$ and $1\leq \omega \leq n/10$. There exists a function
$$
W_{\omega,n}\in C^\infty(\mathbb R),\qquad \operatorname{supp}W_{\omega,n}\subset [n/2,n],
$$
such that
\[
        0\leq W_{\omega,n}\leq 1,
\]
\[
        W_{\omega,n}(x)=1
        \qquad
        \left(\frac n2+\omega\leq x\leq n-\omega\right),
\]
and, for every integer $j\geq 0$,
\[
        W_{\omega,n}^{(j)}(x)\ll_j \omega^{-j}.
\]
Moreover, for every $j \ge 1$,
\begin{equation*}
    \left\lVert W_{\omega,n}^{(j)} \right\rVert_1 \ll_{j} \omega^{1-j}.
\end{equation*}

\end{lemma}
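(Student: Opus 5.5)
The plan is to build $W_{\omega,n}$ by rescaling and translating a single fixed smooth step function. This reduces every derivative and $L^1$ estimate to a property of a fixed function, with the dependence on $\omega$ coming out of the chain rule.

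\textbf{Construction.} Fix once and for all a function $\psi\in C^\infty(\mathbb R)$ with $0\le\psi\le1$, $\psi(t)=0$ for $t\le 0$, $\psi(t)=1$ for $t\ge1$, and $\psi$ nondecreasing. A standard choice is
$\psi(t)=\phi(t)/(\phi(t)+\phi(1-t))$ with $\phi(t)=e^{-1/t}$ for $t>0$ and $\phi(t)=0$ otherwise. Then define
\[
W_{\omega,n}(x)=\psi\!\left(\frac{x-n/2}{\omega}\right)\psi\!\left(\frac{n-x}{\omega}\right).
\]
Since $\omega\le n/10$, the two transition regions $[n/2,n/2+\omega]$ and $[n-\omega,n]$ are disjoint; they are separated by a gap of length $n/2-2\omega\ge 3n/10>0$. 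So at every point at most one factor is nonconstant. The support, the bounds $0\le W\le1$, and $W=1$ on $[n/2+\omega,n-\omega]$ follow at once from the properties of $\psi$.

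\textbf{Derivative bounds.} On the left transition region $W(x)=\psi((x-n/2)/\omega)$, so by the chain rule $W^{(j)}(x)=\omega^{-j}\psi^{(j)}((x-n/2)/\omega)$. Hence $|W^{(j)}|\le \omega^{-j}\|\psi^{(j)}\|_\infty\ll_j\omega^{-j}$, and the right transition region is handled symmetrically. For $j\ge1$ the derivative $W^{(j)}$ vanishes outside the two transition regions. Integrating, each region contributes
\[
\int |W^{(j)}| = \omega^{-j}\cdot\omega\int_0^1|\psi^{(j)}(t)|\,dt\ll_j \omega^{1-j},
\]
which gives the $L^1$ bound. The case $j=0$ of the pointwise bound is just $0\le W\le 1$.

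There is no real obstacle in this argument. The only point needing care is to use the hypothesis $\omega\le n/10$ to separate the two edges, so that no product-rule cross terms arise and the derivatives stay confined to sets of measure $O(\omega)$. The hypothesis $\omega\ge1$ is not needed for the construction itself; it only matters later.
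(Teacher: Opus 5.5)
Your proposal is correct and uses the same construction as the paper, $W_{\omega,n}(x)=\psi\bigl((x-n/2)/\omega\bigr)\psi\bigl((n-x)/\omega\bigr)$ with the standard $e^{-1/t}$ step function $\psi$. The only difference is in the derivative bounds. You use $\omega\le n/10$ to separate the two edges, so no cross terms arise. The paper instead applies the Leibniz rule and bounds each term by $\omega^{-r}\omega^{-(j-r)}=\omega^{-j}$. Both routes give the same pointwise and $L^1$ bounds.
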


\begin{proof}
Let
\[
\varphi(t)=
\begin{cases}
e^{-1/t}, & t>0,\\
0, & t\leq 0,
\end{cases}
\]
and define
\[
        \psi(t)=
        \frac{\varphi(t)}
        {\varphi(t)+\varphi(1-t)} .
\]
Then $\psi\in C^\infty(\mathbb R)$, $0\leq \psi\leq 1$,
\[
        \psi(t)=0 \qquad (t\leq 0),
\]
and
\[
        \psi(t)=1 \qquad (t\geq 1).
\]
Now set
\[
        W_{\omega,n}(x)
        =
        \psi\left(\frac{x-n/2}{\omega}\right)
        \psi\left(\frac{n-x}{\omega}\right).
\]

We first verify the support and plateau properties. If $x\leq n/2$,
then $(x-n/2)/\omega \leq 0$, so the first factor is zero. If $x\geq n$,
then $(n-x)/\omega \leq 0$, so the second factor is zero. Hence
\[
        W_{\omega,n}(x)=0
        \qquad
        (x\leq n/2 \text{ or } x\geq n),
\]
and therefore
\[
        \operatorname{supp} W_{\omega,n}\subset [n/2,n].
\]
Since $\psi$ is smooth and the two transition factors vanish to infinite
order at the endpoints, we have
\[
        W_{\omega,n}\in C_c^\infty(\mathbb R).
\]
Also, since $0\leq \psi\leq 1$, we have
\[
        0\leq W_{\omega,n}\leq 1.
\]

If
\[
        \frac n2+\omega \leq x \leq n-\omega,
\]
then
\[
        \frac{x-n/2}{\omega}\geq 1,
        \qquad
        \frac{n-x}{\omega}\geq 1.
\]
Thus both factors are equal to $1$, and hence
\[
        W_{\omega,n}(x)=1
        \qquad
        \left(\frac n2+\omega\leq x\leq n-\omega\right).
\]

It remains to prove the derivative bounds. For every integer $r\geq 0$,
the chain rule gives
\[
        \frac{d^r}{dx^r}
        \psi\left(\frac{x-n/2}{\omega}\right)
        =
        \omega^{-r}
        \psi^{(r)}\left(\frac{x-n/2}{\omega}\right),
\]
and
\[
        \frac{d^r}{dx^r}
        \psi\left(\frac{n-x}{\omega}\right)
        =
        (-1)^r \omega^{-r}
        \psi^{(r)}\left(\frac{n-x}{\omega}\right).
\]
Since every derivative of $\psi$ is bounded, we obtain
\[
        \left\|
        \frac{d^r}{dx^r}
        \psi\left(\frac{x-n/2}{\omega}\right)
        \right\|_\infty
        \ll_r \omega^{-r},
\]
and similarly
\[
        \left\|
        \frac{d^r}{dx^r}
        \psi\left(\frac{n-x}{\omega}\right)
        \right\|_\infty
        \ll_r \omega^{-r}.
\]
By the product rule,
\[
        W_{\omega,n}^{(j)}(x)
        =
        \sum_{r=0}^j
        \binom jr
        \frac{d^r}{dx^r}
        \psi\left(\frac{x-n/2}{\omega}\right)
        \frac{d^{j-r}}{dx^{j-r}}
        \psi\left(\frac{n-x}{\omega}\right).
\]
Therefore
\[
        \|W_{\omega,n}^{(j)}\|_\infty
        \ll_j
        \sum_{r=0}^j
        \omega^{-r} \omega^{-(j-r)}
        \ll_j
         \omega^{-j}.
\]

Since $W_{\omega,n}^{(j)}$ is supported in the two transition intervals, whose total length is $O(\omega)$, the $L^1$-bound follows from the supremum bound.

This completes the proof.
\end{proof}

We now fix a parameter $1\leq \omega \leq n/10$, and let $W_{\omega,n}$
be the smooth cut-off constructed in Lemma~\ref{lem:w def}.
Define
\begin{equation}
\label{eq:Swhn}
  S_\omega(h,n)
  =
  \sum_{a\in\mathbb Z}W_{\omega,n}(a)e(h\sqrt a),
\end{equation}
and for $r \ge 1,$
\begin{equation*}
  M_{2r,\omega}(H,n)
  =
  \sum_{h\sim H}|S_\omega(h,n)|^{2r}.
\end{equation*}
The sharp and smoothed cut-offs differ only on two endpoint intervals of
total length $O(\omega)$. Therefore
\begin{equation*}
  S(h,n)-S_\omega(h,n)\ll \omega+1.
\end{equation*}
Since $\omega\ge 1$, we obtain
\begin{equation*}
  |S(h,n)|^{2r}
  \ll_r
  |S_\omega(h,n)|^{2r}+\omega^{2r},
\end{equation*}
and hence
\begin{equation}
  \label{eq:sharp-to-smooth-higher-moment}
  M_{2r}(H,n)
  \ll_r
  M_{2r,\omega}(H,n)+H\omega^{2r}.
\end{equation}
The term $H\omega^{2r}$ is the cost of removing the smoothing and should be
balanced only at the end.

Applying Poisson summation to the function
\[
  x\mapsto W_{\omega,n}(x)e(h\sqrt x),
\]
we obtain
\begin{equation}
\label{eq:S_sum_I}
     S_{\omega}(h,n)=\sum_{m\in\mathbb Z} I_m(h,n),
\end{equation}
where
\[
  I_m(h,n):=
  \int_{\mathbb R} W_{\omega,n}(x)e(h\sqrt x-mx)\,dx,
  \qquad m\in\mathbb Z.
\]
We now make the change of variables $x=y^2$.  Since
$W_{\omega,n}$ is supported in $[n/2,n]$, only positive $y$ contributes,
and therefore
\begin{equation*}
  I_m(h,n)
  =
  \int_{\mathbb R} 2y W_{\omega,n}(y^2)
  e(hy-my^2)\,dy .
\end{equation*}
Put
\begin{equation}
    \label{eq:A}
  A_{\omega,n}(y)=2yW_{\omega,n}(y^2)\mathbf 1_{(0,\infty)}(y).
\end{equation}
Then
\[
  I_m(h,n)
  =
  \int_{\mathbb R} A_{\omega,n}(y)e(hy-my^2)\,dy.
\]
For $m>0$, let
\[
  y_m:=\frac{h}{2m}.
\]
Completing the square gives
\begin{equation*}
  I_m(h,n)
  =
  e\!\left(\frac{h^2}{4m}\right)
  \int_{\mathbb R}
  A_{\omega,n}(y)e(-m(y-y_m)^2)\,dy .
\end{equation*}
The support of $A_{\omega,n}$ is contained in
\[
  \sqrt{n/2}\leq y\leq \sqrt n.
\]
Moreover, the plateau of $W_{\omega,n}$ gives
\[
  W_{\omega,n}(y^2)=1
  \qquad
  \left(n/2+\omega \leq y^2\leq n-\omega\right).
\]
Thus the stationary point $y_m$ can contribute only when
\[
  y_m\asymp \sqrt n,
\]
or equivalently
\[
  m\asymp \frac{h}{\sqrt n}.
\]

\subsection{Stationary and non-stationary phase estimates}

We now record the stationary phase estimate in the range
\[
  m\asymp \frac{h}{\sqrt n}.
\]

\begin{lemma}[Stationary phase with adjustable edge length]
\label{lem:Im-stationary}
Suppose $n$ is sufficiently large and $1 \leq \omega \leq n/10$, and let $W_{\omega,n}$ be as in
Lemma~\ref{lem:w def}. $A_{\omega,n}(y)$
is given by \eqref{eq:A},
and
\[
        I_m(h,n)=\int_{\mathbb R}A_{\omega,n}(y)e(hy-my^2)\,dy .
\]
Let $m>0$, and set
\[
        y_m=\frac{h}{2m}.
\]
Assume that
\begin{equation}
\label{eq:stationary range}
    \frac{1}{2}\sqrt{\frac n2}<y_m< \frac{3}{2}\sqrt n ,\ \text{ i.e. } m \in \left[\frac{1}{3}\frac{h}{\sqrt{n}}, \sqrt{2}\frac{h}{\sqrt{n}}\right].
\end{equation}
Then
\[
        I_m(h,n)
        =        \frac{e\!\left(\frac{h^2}{4m}-\frac{1}{8}\right)}{\sqrt{2m}}\,
        A_{\omega,n}(y_m)
        +O\left(      
            \frac{n^{3/2}}{m^{3/2} \omega^2}
        \right),
\]
where the implied constant is absolute. Equivalently,
\[
        I_m(h,n)
        =        \frac{e\!\left(\frac{h^2}{4m}-\frac{1}{8}\right)}{\sqrt{2m}}\,
        \frac{h}{m}
        W_{\omega,n}\!\left(\frac{h^2}{4m^2}\right)
        +O\left(
            \frac{n^{3/2}}{m^{3/2} \omega^2}
        \right).
\]
In particular, if
$$
\sqrt{\frac{n}{2}+\omega} \leq y_m \leq \sqrt{n-\omega},
$$
then $W_{\omega,n}(y_m^2)=1$, and therefore
\[
        I_m(h,n)
        =
        2^{-1/2}e(-1/8)
        \frac{h}{m^{3/2}}
        e\!\left(\frac{h^2}{4m}\right)
        +O\left(           
            \frac{n^{3/2}}{m^{3/2} \omega^2}
        \right).
\]
\end{lemma}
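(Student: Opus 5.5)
The plan is to avoid any asymptotic expansion. Instead I would use the exact Fourier identity for the Gaussian chirp, together with Plancherel, and then bound the remainder using the $L^1$ norms of derivatives of the amplitude from Lemma~\ref{lem:w def}. Completing the square as in the text, write
\[
I_m(h,n)=\e\!\left(\frac{h^2}{4m}\right)\int_{\mathbb R} g(t)\,\e(-mt^2)\,dt,\qquad g(t):=A_{\omega,n}(t+y_m).
\]
Since $W_{\omega,n}$ is supported in $[n/2,n]$, the function $A_{\omega,n}$ is supported in $[\sqrt{n/2},\sqrt n]$, away from $0$. Hence the indicator in \eqref{eq:A} is harmless and $g\in C_c^\infty$.

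\textbf{Step 1 (exact Gaussian identity).} The Fourier transform of $t\mapsto \e(-mt^2)$, taken in the distributional sense for $m>0$, is $(2m)^{-1/2}\e(-1/8)\,\e(\xi^2/(4m))$. By Parseval,
\[
\int g(t)\e(-mt^2)\,dt=\frac{\e(-1/8)}{\sqrt{2m}}\int_{\mathbb R}\hat g(\xi)\,\e\!\left(\frac{\xi^2}{4m}\right)d\xi .
\]
Splitting $\e(\xi^2/4m)=1+(\e(\xi^2/4m)-1)$ and using $\int\hat g=g(0)=A_{\omega,n}(y_m)$ produces the main term. The same expression with $A_{\omega,n}(y_m)=2y_mW_{\omega,n}(y_m^2)$ and $y_m=h/(2m)$ gives the second displayed form, and the plateau case follows immediately. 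Since $|\e(x)-1|\ll|x|$, the remainder is
\[
\ll \frac{1}{m^{3/2}}\int_{\mathbb R}\xi^2|\hat g(\xi)|\,d\xi .
\]

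\textbf{Step 2 (bounding $\int\xi^2|\hat g|$).} Integrating by parts gives $|\hat g(\xi)|\ll\|g^{(j)}\|_1|\xi|^{-j}$. I would use $j=2$ for $|\xi|\le T$ and $j=4$ for $|\xi|>T$, which yields
\[
\int\xi^2|\hat g|\ll T\|g''\|_1+T^{-1}\|g''''\|_1 .
\]

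\textbf{Step 3 (derivative norms).} This is the main bookkeeping step. In the $y$-variable, the transition regions of $W_{\omega,n}(y^2)$ have length $L:=\omega/\sqrt n$, because $d(y^2)\asymp\sqrt n\,dy$. By the chain rule and Lemma~\ref{lem:w def}, each $y$-derivative hitting $W_{\omega,n}(y^2)$ costs a factor $\sqrt n\cdot\omega^{-1}=L^{-1}$, while derivatives hitting the factor $2y$ or inner derivatives of $y^2$ cost at most $n^{-1/2}\le L^{-1}$. Therefore $g^{(j)}\ll_j\sqrt n\,L^{-j}$ on a set of total measure $O(L)$ for $j\ge1$, and so
\[
\|g^{(j)}\|_1\ll_j \sqrt n\,L^{1-j}.
\]
One must check that terms where every derivative falls on the smooth part $2y$ are also dominated; they are, since $\omega\le n$ gives $L\le\sqrt n$. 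Taking $T=L^{-1}$ then gives
\[
\int\xi^2|\hat g|\ll\sqrt n\,L^{-2}=\frac{n^{3/2}}{\omega^2}.
\]
Multiplying by $m^{-3/2}$ yields the claimed error $O\bigl(n^{3/2}m^{-3/2}\omega^{-2}\bigr)$, with an absolute constant because only finitely many derivative bounds are used. The hypothesis \eqref{eq:stationary range} is not needed for the identity itself. It only places $y_m$ at the scale $\sqrt n$, where the main term is nontrivial, and the plateau statement uses $W_{\omega,n}(y_m^2)=1$. The main place where care is needed is Step 3, especially tracking how the product and chain rules distribute derivatives between $2y$ and $W_{\omega,n}(y^2)$.
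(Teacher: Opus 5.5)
Your proof is correct, and it takes a genuinely different route from the paper's.

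\textbf{The paper's route.} The paper splits according to whether $\omega\ge\sqrt{n/m}$.
\begin{itemize}
\item When $\omega\ge\sqrt{n/m}$, it applies Huxley's general stationary-phase Lemma~5.5.6 with sup-norm parameters $U=\sqrt n$, $N=\omega/\sqrt n$, $M=\sqrt n$, $T=mn$. That lemma carries endpoint terms in $(y_m-\alpha)^{-3}$ and $(\beta-y_m)^{-3}$, which is why $y_m$ is assumed to be localised.
\item When $\omega\le\sqrt{n/m}$, the amplitude is not smooth on the stationary scale $m^{-1/2}$. The paper therefore drops the asymptotic and combines uniformly bounded Fresnel integrals with one integration by parts to get $I_m(h,n)\ll\sqrt{n/m}$. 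The error term absorbs this bound, and likewise the main term.
\end{itemize}

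\textbf{Your route.} You use the fact that after $x=y^2$ the phase is exactly quadratic. Parseval with the explicit transform of the chirp then gives an exact identity. The entire error becomes $m^{-3/2}\int\xi^2|\hat g|$, which is controlled by $\|g''\|_1$ and $\|g''''\|_1$, i.e.\ by the same $L^1$ derivative bounds as in \eqref{eq:L1_bound}.

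\textbf{What each route buys.} Your argument is self-contained, uses no external lemma, and needs no case split: the regime $\omega\le\sqrt{n/m}$ is covered automatically. It holds for every $m>0$ with a visibly absolute constant. In your argument the hypothesis on $y_m$ is used only to guarantee $y_m>0$, so that $A_{\omega,n}(y_m)=\frac hm W_{\omega,n}(h^2/4m^2)$. The paper's route is more robust in one respect: Huxley's lemma does not require $f''$ to be constant, whereas your argument rests entirely on the exact Fourier transform of the Gaussian chirp.
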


\begin{rem}
    The range in \eqref{eq:stationary range} is deliberately chosen slightly
wider than the support of $W$, so as to leave some room for the subsequent
non-stationary phase estimates.
\end{rem}

\begin{proof}
We first suppose $\sqrt{\frac{n}{m}} \leq \omega$. Then we can invoke \cite[Lemma 5.5.6]{Huxley1}.

Write
\[
        f(y)=hy-my^2,\qquad g(y)=A_{\omega,n}(y)=2yW_{\omega,n}(y^2).
\]
Then
\[
        I_m(h,n)=\int_{\mathbb R}g(y)e(f(y))\,dy .
\]
The phase has a unique stationary point
\[
        y_m=\frac{h}{2m},
\]
and
\[
        f(y_m)=\frac{h^2}{4m},\qquad f''(y)=-2m .
\]

Choose fixed constants 
\begin{equation}
\label{eq:c1_2}
\begin{split}
0 < c_1 < \frac{1}{2\sqrt{2}} \quad \text{and} \quad c_2 > 3/2, \\
\text{say } c_1=\frac{1}{3\sqrt{2}},\ c_2=2,
\end{split}
\end{equation}
and set
\[
\alpha=c_1\sqrt n,\qquad \beta=c_2\sqrt n.
\]
Then $\operatorname{supp}(g)\subseteq [\alpha,\beta]$, and $g(\alpha)=g(\beta)=0$.

We first record derivative bounds for $g$. Since
\[
        g(y)=2yW_{\omega,n}(y^2),
\]
we have, on the support of $g$,
\[
        g(y)\ll \sqrt n .
\]
Also
\[
        g'(y)
        =
        2W_{\omega,n}(y^2)+4y^2W'_{\omega,n}(y^2).
\]
By Lemma~\ref{lem:w def},
\[
        W'_{\omega,n}(x)\ll \omega^{-1}.
\]
Since $y^2\asymp n$ on the support of $g$, we get
\[
        g'(y)\ll 1+\frac{n}{\omega}\ll \frac{n}{\omega}.
\]
Similarly,
\[
        g''(y)
        =
        12yW'_{\omega,n}(y^2)+8y^3W''_{\omega,n}(y^2),
\]
and hence
\[
        g''(y)\ll \frac{\sqrt n}{\omega}+\frac{n^{3/2}}{\omega^2}
        \ll \frac{n^{3/2}}{\omega^2}.
\]

We now apply \cite[Lemma 5.5.6]{Huxley1} to $-f$, and then take the complex
conjugate. This accounts for the factor $e(-1/8)$ in the main term.

We need to choose proper parameters $U, N, M, T$ in \cite[Lemma 5.5.6]{Huxley1}. On the support of $g$,
we have $y\asymp \sqrt n$. By Lemma~\ref{lem:w def},
\[
        W_{\omega,n}^{(j)}(x)\ll_j \omega^{-j}.
\]
Hence
\[
        g(y)\ll \sqrt n,\qquad
        g'(y)\ll \frac{n}{\omega},\qquad
        g''(y)\ll \frac{n^{3/2}}{\omega^2},\qquad
        g^{(3)}(y)\ll \frac{n^2}{\omega^3}.
\]
Thus the amplitude conditions hold with 
\[      U = \sqrt n,\qquad N = \frac{\omega}{\sqrt n}.
\]
For the phase we can take
\[
        M = \sqrt n,\qquad T = mn.
\]

Therefore \cite[Lemma 5.5.6]{Huxley1} gives
\[
\begin{aligned}
        I_m(h,n)
        &=
\frac{g(y_m)e(f(y_m)-1/8)}{\sqrt{2m}}        \\
        &\quad
        +O\left(
            \frac{M^4U}{T^2}
            \left(1+\frac{M}{N}\right)^2
            \left(
                \frac{1}{(y_m-\alpha)^3}
                +
                \frac{1}{(\beta-y_m)^3}
            \right)
        \right)                               \\
        &\quad
        +O\left(  \frac{MU}{T^{3/2}}
            \left(1+\frac{M}{N}\right)^2
        \right).
\end{aligned}
\]

Substituting
\[
        M=\sqrt n,\qquad N=\frac{\omega}{\sqrt n},\qquad
        T=mn,\qquad U=\sqrt n
\]
we obtain
\[
        \frac{M}{N}=\frac{n}{\omega}.
\]
Moreover, by our choice \eqref{eq:c1_2} of the constants $\alpha,\beta$ and by the
assumption \eqref{eq:stationary range}, we have
\[
        y_m-\alpha\asymp \sqrt n,\qquad
        \beta-y_m\asymp \sqrt n .
\]
Hence the first error term is
\[
\begin{aligned}
&\frac{M^4U}{T^2}
\left(1+\frac{M}{N}\right)^2
\left(
        \frac{1}{(y_m-\alpha)^3}
        +
        \frac{1}{(\beta-y_m)^3}
\right)        \\
&\qquad\ll
\frac{n^{5/2}}{m^2n^2}
        \left(\frac{n}{\omega}\right)^2
        \frac{1}{n^{3/2}}
\ll
        \frac{n}{m^2 \omega^2}.
\end{aligned}
\]
Similarly, the second error term is
\[
\begin{aligned}
        \frac{MU}{T^{3/2}}
        \left(1+\frac{M}{N}\right)^2
        \ll
        \frac{n}{(mn)^{3/2}}
        \left(\frac{n}{\omega}\right)^2
    \ll
        \frac{n^{3/2}}{m^{3/2} \omega^2}.
\end{aligned}
\]
Therefore
\[
        I_m(h,n)
        =
        \frac{g(y_m)e(f(y_m)-1/8)}{\sqrt{2m}}
        +
        O\left(
            \frac{n^{3/2}}{m^{3/2} \omega^2}
        \right).
\]
Since
\[
        f(y_m)=\frac{h^2}{4m}
\]
and
\[
\begin{aligned}
        g(y_m)
        =
        2y_m W_{\omega,n}(y_m^2)
        =
        \frac{h}{m}
        W_{\omega,n}\left(\frac{h^2}{4m^2}\right),
\end{aligned}
\]
we get
\[
        I_m(h,n)
        =
        e\left(\frac{h^2}{4m}-\frac18\right)
        \frac{h}{\sqrt{2}m^{3/2}}
        W_{\omega,n}\left(\frac{h^2}{4m^2}\right)
        +
        O\left(
            \frac{n^{3/2}}{m^{3/2} \omega^2}
        \right).
\]

It remains to handle the case of $1 \le \omega \leq \sqrt{\frac{n}{m}}$. We shall use only a crude
second derivative estimate.  Since
\[
f(y)=hy-my^2=\frac{h^2}{4m}-m(y-y_m)^2,\qquad y_m=\frac{h}{2m}.
\]
Hence, for every interval $J,$
\[
\int_J e(f(y))\,dy
=
e\left(\frac{h^2}{4m}\right)m^{-1/2}
\int_{\sqrt m(J-y_m)} e(-u^2)\,du.
\]
Since
\[
\sup_{A<B}\left|\int_A^B e(-u^2)\,du\right|\ll 1,
\]
we have
\[
\int_J e(f(y))\,dy\ll m^{-1/2}
\]
uniformly in $J$.

Fix any real number $a$, for instance the left endpoint of the support of $g$, and put
\[
F(t)=\int_a^t e(f(y))\,dy.
\]
Then
$\|F\|_\infty\ll m^{-1/2}$.  By partial summation,
\[
I_m(h,n)
=
\int_{-\infty}^{\infty} g(y)e(f(y))\,dy
=
[g(y)F(y)]_{-\infty}^{\infty}
-
\int_{-\infty}^{\infty} F(y)g'(y)\,dy.
\]
The boundary term vanishes.  Moreover,
\[
g'(y)
=
2W_{\omega,n}(y^2)+4y^2W'_{\omega,n}(y^2).
\]
Since $y\asymp\sqrt n$ on the support and $\int |W'_{\omega,n}(x)|\,dx\ll 1$, we get
\[
\int |g'(y)|\,dy\ll \sqrt n.
\]
Therefore
\[
I_m(h,n)\ll \frac{\sqrt n}{\sqrt m},
\]
which is dominated by the error term $O\left(      
            \frac{n^{3/2}}{m^{3/2} \omega^2}
        \right)$ under $\omega \leq \sqrt{\frac{n}{m}}.$

This completes the proof.

\end{proof}

 By Lemma \ref{lem:Im-stationary}, for $m \in \left[\frac{1}{3}\frac{h}{\sqrt{n}}, \sqrt{2}\frac{h}{\sqrt{n}}\right]$ we have
\begin{equation*}
\begin{split}
    I_m(h, n)
&= C\,e\!\left(\frac{h^2}{4m}\right)
\frac{h}{m^{3/2}} W_{\omega,n}\!\left(\frac{h^2}{4m^2}\right)
        +O\left(
            \frac{n^{3/2}}{m^{3/2} \omega^2}
        \right),
\end{split}
\end{equation*}
for some absolute constant $C$.

We next estimate the integrals $I_m(h,n)$ outside the stationary range. First we introduce an auxiliary lemma.

\begin{lem}[Integration by parts]
\label{lemma:int_by_parts}
Let $G\in C_c^\infty(\mathbb R)$, and let $\phi$ be a real-valued
smooth phase such that $\phi'$ does not vanish on the support of $G$.
Assume moreover that $\phi''$ is constant. Then, for every integer
$J\geq 1$,
\[
\left|
\int_{\mathbb R}G(x)e(\phi(x))\,dx
\right|
\ll_J
\sum_{a+b=J}
\int_{\mathbb R}
\frac{|G^{(a)}(x)|\,|\phi''|^b}{|\phi'(x)|^{a+2b}}\,dx .
\]
\end{lem}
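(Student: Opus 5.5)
The plan is to iterate the basic integration by parts $J$ times, using the differential operator
\[
D G := \frac{d}{dx}\left(\frac{G}{2\pi i\,\phi'}\right),
\]
which satisfies $\int G\,e(\phi)\,dx = -\int (DG)\,e(\phi)\,dx$. The identity comes from writing $e(\phi)=\frac{1}{2\pi i\phi'}\frac{d}{dx}e(\phi)$ and integrating by parts. The boundary terms vanish because $G$ has compact support and $\phi'$ is nonvanishing there, so $G/\phi'$ is smooth and compactly supported. Applying this $J$ times gives
\[
\left|\int G\,e(\phi)\right| \le (2\pi)^{-J}\int |\widetilde D^J G|\,dx,
\qquad \widetilde D G=(G/\phi')'.
\]

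The main step is an explicit structural formula for $\widetilde D^J G$. Because $\phi''$ is constant, every derivative of $1/\phi'$ is a constant multiple of a power of $\phi''$ divided by a power of $\phi'$: indeed $(1/\phi')^{(k)} = c_k (\phi'')^k/(\phi')^{k+1}$ with $c_k=(-1)^k k!$. I will prove by induction on $J$ that $\widetilde D^J G$ is a finite linear combination, with coefficients depending only on $J$, of terms
\[
\frac{G^{(a)}(\phi'')^b}{(\phi')^{a+2b}},\qquad a+b=J.
\]
For the base case $J=1$ we have $(G/\phi')' = G'/\phi' - G\phi''/(\phi')^2$, which corresponds to $(a,b)=(1,0)$ and $(0,1)$. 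For the inductive step, differentiate $G^{(a)}(\phi'')^b(\phi')^{-(a+2b)-1}$. Differentiating $G^{(a)}$ yields the type $(a+1,b)$, with denominator exponent $(a+1)+2b$. Differentiating the power of $\phi'$ yields a factor $\phi''$ and raises the exponent by one, giving $G^{(a)}(\phi'')^{b+1}(\phi')^{-(a+2b+2)}$; this is the type $(a,b+1)$, since $a+2(b+1)=a+2b+2$. The factor $(\phi'')^b$ is constant and contributes nothing when differentiated. Both new types have index sum $J+1$, which closes the induction.

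Inserting absolute values and integrating then gives the stated bound, with implied constant depending only on $J$.

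The only point needing care is the bookkeeping of exponents in the inductive step, namely checking that differentiating $(\phi')^{-k}$ produces exactly one extra factor $\phi''$ and one extra power of $\phi'$ in the denominator. Constancy of $\phi''$ is what prevents higher derivatives of $\phi$ from appearing, so I expect no genuine obstacle.
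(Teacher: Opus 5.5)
Your proposal is correct and follows essentially the same route as the paper: repeated integration by parts with the operator $\frac{d}{dx}\bigl(G/(2\pi i\phi')\bigr)$, followed by an induction showing that the $J$-th iterate is a linear combination of terms $G^{(a)}(\phi'')^b/(\phi')^{a+2b}$ with $a+b=J$. As in the paper, constancy of $\phi''$ ensures that each derivative falls either on $G^{(a)}$ or on the power of $\phi'$, and your remark that $G/\phi'$ is smooth and compactly supported (so the boundary terms vanish) makes explicit a point the paper leaves implicit.
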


\begin{proof}
We have
\[
e(\phi(x))
=
\frac{1}{2\pi i\phi'(x)}
\frac{d}{dx}e(\phi(x)).
\]
Integrating by parts and using the compact support of $G$, we get
\[
\int_{\mathbb R}G(x)e(\phi(x))\,dx
=
-
\int_{\mathbb R}
\frac{d}{dx}
\left(
\frac{G(x)}{2\pi i\phi'(x)}
\right)
e(\phi(x))\,dx .
\]
Hence, if we define
\[
\mathcal L G
=
\frac{d}{dx}
\left(
\frac{G}{2\pi i\phi'}
\right),
\]
then repeated integration by parts gives
\[
\int_{\mathbb R}G(x)e(\phi(x))\,dx
=
(-1)^J
\int_{\mathbb R}\mathcal L^J G(x)e(\phi(x))\,dx .
\]

It remains to bound $\mathcal L^J G$. We claim that $\mathcal L^J G$
is a finite linear combination, with coefficients depending only on
$J$, of terms of the form
\[
\frac{G^{(a)}(x)(\phi'')^b}{(\phi'(x))^{a+2b}},
\qquad a+b=J.
\]
This is proved by induction on $J$. For $J=1$,
\[
\mathcal L G
=
\frac{1}{2\pi i}
\left(
\frac{G'}{\phi'}
-
\frac{G\phi''}{(\phi')^2}
\right),
\]
which is precisely the required form.

Suppose the claim holds after $J$ applications of $\mathcal L$. Consider
one of the resulting terms,
\[
\frac{G^{(a)}(x)(\phi'')^b}{(\phi'(x))^{a+2b}},
\qquad a+b=J.
\]
Applying $\mathcal L$ once more means dividing this term by
$2\pi i\phi'(x)$ and then differentiating. Up to a constant depending
only on $J$, we must differentiate
\[
\frac{G^{(a)}(x)(\phi'')^b}{(\phi'(x))^{a+2b+1}}.
\]
Since $\phi''$ is constant, the derivative can only hit either
$G^{(a)}(x)$ or the power of $\phi'(x)$ in the denominator. If it hits
$G^{(a)}(x)$, we get a term of the form
\[
\frac{G^{(a+1)}(x)(\phi'')^b}{(\phi'(x))^{a+2b+1}}
=
\frac{G^{(a+1)}(x)(\phi'')^b}
     {(\phi'(x))^{(a+1)+2b}}.
\]
If it hits $(\phi'(x))^{-(a+2b+1)}$, then one additional factor of
$\phi''$ appears, and we get a term of the form
\[
\frac{G^{(a)}(x)(\phi'')^{b+1}}{(\phi'(x))^{a+2b+2}}
=
\frac{G^{(a)}(x)(\phi'')^{b+1}}
     {(\phi'(x))^{a+2(b+1)}}.
\]
In both cases the new indices have sum $J+1$. This proves the
induction claim.

Taking absolute values in
\[
\int_{\mathbb R}G(x)e(\phi(x))\,dx
=
(-1)^J
\int_{\mathbb R}\mathcal L^J G(x)e(\phi(x))\,dx
\]
and using the above description of $\mathcal L^J G$, we obtain
\[
\left|
\int_{\mathbb R}G(x)e(\phi(x))\,dx
\right|
\ll_J
\sum_{a+b=J}
\int_{\mathbb R}
\frac{|G^{(a)}(x)|\,|\phi''|^b}{|\phi'(x)|^{a+2b}}\,dx ,
\]
as required.
\end{proof}

\begin{lemma}[Non-stationary phase]
\label{lem:Im-nonstat}
Let $1\leq \omega \leq n/10$, 
$A_{\omega,n}(y)$ is given by \eqref{eq:A}
and
\[
I_m(h,n)=\int_{\mathbb R}A_{\omega,n}(y)e(hy-my^2)\,dy.
\]
Then, for every integer $h\geq 1$ and every fixed integer $K\geq 1$,
we have
\begin{equation}
\label{eq:sum_m_1}
    \sum_{1\leq m\leq h/(3\sqrt n)}
|I_m(h,n)|
\ll_K
\min\left\{1,\left(\frac{\sqrt n}{\omega h}\right)^K\right\},
\end{equation}
\begin{equation}
\label{eq:sum_m_2}
    \sum_{m\geq \sqrt 2 h/\sqrt n}
|I_m(h,n)|
\ll_K
\frac{1}{\omega^K(1+h/\sqrt n)^K},
\end{equation}
\begin{equation}
\label{eq:sum_m_minus}
    \sum_{m<0}|I_m(h,n)|
\ll_K
\frac{1}{\omega^K(1+h/\sqrt n)^K},
\end{equation}
and
\begin{equation}
\label{eq:sum_m_0}
    |I_0(h,n)|
\ll_K
\frac{\sqrt n}{h}
\min\left\{1,\left(\frac{\sqrt n}{\omega h}\right)^K\right\}.
\end{equation}
Consequently,
\begin{equation}
\label{eq:exc_m}
\sum_{\substack{m\in\mathbb Z\\
m\notin [h/(3\sqrt n),\,\sqrt 2 h/\sqrt n]}}
|I_m(h,n)|
\ll_K
\left(1+\frac{\sqrt n}{h}\right)
\min\left\{1,\left(\frac{\sqrt n}{\omega h}\right)^K\right\}.
\end{equation}

\end{lemma}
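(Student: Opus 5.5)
The plan is to apply Lemma~\ref{lemma:int_by_parts} directly to each $I_m(h,n)$, with amplitude $G=A_{\omega,n}$ and phase $\phi(y)=hy-my^2$. This phase has constant second derivative $\phi''=-2m$, so the lemma applies whenever $\phi'$ does not vanish on the support. The first step is to record amplitude bounds in the $y$-variable. Since $A_{\omega,n}(y)=2yW_{\omega,n}(y^2)$ with $y\asymp\sqrt n$, each $y$-derivative of $W_{\omega,n}(y^2)$ costs a factor $\ll \sqrt n/\omega$, using Lemma~\ref{lem:w def} and the chain rule. Derivatives falling on $2y$ cost at most $n^{-1/2}\le \sqrt n/\omega$, because $\omega\le n$. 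For $a\ge1$, $A^{(a)}_{\omega,n}$ is supported on the transition set, which has $y$-length $O(\omega/\sqrt n)$, up to the harmless terms where all derivatives hit $2y$. This gives
\[
\|A_{\omega,n}\|_1\ll n,\qquad \|A^{(a)}_{\omega,n}\|_1\ll_a \omega\Bigl(\frac{\sqrt n}{\omega}\Bigr)^{a}\quad(a\ge1),
\]
and in particular $\|A'_{\omega,n}\|_1\ll\sqrt n$. I will apply Lemma~\ref{lemma:int_by_parts} with $J=K+2$ or $K+3$, reserving a couple of spare powers to absorb stray factors of $\omega$ and of the number of $m$'s.

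\emph{Case $1\le m\le h/(3\sqrt n)$.} This range is empty unless $h\ge 3\sqrt n$, in which case $\sqrt n/(\omega h)\le 1/3$ and the minimum in \eqref{eq:sum_m_1} is the power. On the support, $y\le\sqrt n$ gives $|\phi'|=h-2my\ge h/3$. The $(a,b)$ term is therefore at most
\[
\|A^{(a)}_{\omega,n}\|_1\,\frac{m^b}{h^{a+2b}}.
\]
Since $m\le h/\sqrt n$, we have $m^b/h^{2b}\le (h\sqrt n)^{-b}$, and $(h\sqrt n)^{-1}\le \sqrt n/(\omega h)$ because $\omega\le n$. Hence every term is bounded by either $\omega(\sqrt n/(\omega h))^{J}$ or $n(h\sqrt n)^{-J}$. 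Summing over the $O(h/\sqrt n)$ values of $m$ multiplies by $h/\sqrt n$, and one spare power of $\sqrt n/(\omega h)$ (respectively $(h\sqrt n)^{-1}$) absorbs the factor $\omega h/\sqrt n$ (respectively $n\cdot h/\sqrt n$, which needs two spare powers). This yields \eqref{eq:sum_m_1}.

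\emph{Cases $m\ge\sqrt2 h/\sqrt n$ and $m<0$.} For $m\ge \sqrt2h/\sqrt n$, the bound $y\ge\sqrt{n/2}$ gives $2my\ge 2h$, so $|\phi'|\ge my\gg m\sqrt n$. For $m<0$ we get $|\phi'|=h+2|m|y\gg |m|\sqrt n+h$. The $(a,b)$ term is then
\[
\ll \omega^{1-a}\,|m|^{-a-b}\,n^{-b}\qquad(a\ge1),
\]
with the analogous bound $n\,(|m|\sqrt n)^{-J}$ when $a=0$. Because $\omega\le n$, we have $n^{-b}\le\omega^{-b}$, so each term is $\ll \omega\cdot(\omega|m|)^{-J}$ up to harmless powers. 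Summing over $|m|\ge\max(1,h/\sqrt n)$ converges and gives $\omega^{1-J}(1+h/\sqrt n)^{1-J}$. Taking $J=K+1$ gives \eqref{eq:sum_m_2}. For \eqref{eq:sum_m_minus}, the extra $+h$ in $|\phi'|$ only helps.

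\emph{Case $m=0$, and the conclusion.} Here $\phi'=h$ and $\phi''=0$, so only the $b=0$ term survives. With $J=1$ this gives $|I_0|\ll\|A'_{\omega,n}\|_1/h\ll\sqrt n/h$. With $J=K+1$ it gives $|I_0|\ll\omega(\sqrt n/(\omega h))^{K+1}=(\sqrt n/h)(\sqrt n/(\omega h))^K$. Together these give \eqref{eq:sum_m_0}. Finally, \eqref{eq:exc_m} follows by adding the four bounds. The bounds in \eqref{eq:sum_m_2} and \eqref{eq:sum_m_minus} are dominated by the right-hand side of \eqref{eq:exc_m}: they are at most $\omega^{-K}$, and also at most $(\sqrt n/h)^K\omega^{-K}$.

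The main obstacle is the bookkeeping in the first case. There the count of $m$'s, of size $h/\sqrt n$, and the $L^1$ norm of the amplitude, of size $\omega$, both grow. One must check that every $(a,b)$ term gains a full power of $\sqrt n/(\omega h)$, and this is exactly where the hypothesis $\omega\le n$ (from $\omega\le n/10$) is used to dominate $(h\sqrt n)^{-1}$.
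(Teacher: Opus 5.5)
Your proposal is correct and follows essentially the paper's route (the same bounds $\|A_{\omega,n}^{(a)}\|_1\ll \omega(\sqrt n/\omega)^{a}$ fed into Lemma~\ref{lemma:int_by_parts} over the same four ranges of $m$); it differs only in that you skip the rescaling $y=\sqrt n\,t$ for large $m$, and your observation that the first range is empty unless $\sqrt n/(\omega h)\le 1/3$ makes the paper's separate $O(1)$ bound unnecessary. One small slip: for $a=0$ and $m\ge \sqrt2 h/\sqrt n$ the term is $n\,m^{J}(m\sqrt n)^{-2J}=n^{1-J}m^{-J}\le \omega^{1-J}m^{-J}$ (and analogously for $m<0$), not the weaker $n(m\sqrt n)^{-J}$ you wrote, which would lose a factor $n^{J/2}$ when $\omega\asymp n$; with the correct value your choice $J=K+1$ works as claimed.
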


\begin{proof}
We first record elementary $L^1$ bounds for the amplitude $A_{\omega,n}$. Since
$W_{\omega,n}$ is supported in $[n/2,n]$, the function $A_{\omega,n}$ is
supported in
\[
\sqrt{n/2}\leq y\leq \sqrt n .
\]
Moreover,
\[
\|A_{\omega,n}\|_1\ll n,
\]
and, for every integer $j\geq 1$,
\begin{equation}
\label{eq:L1_bound}
    \|A_{\omega,n}^{(j)}\|_1
\ll_j
\frac{n^{j/2}}{\omega^{j-1}}.
\end{equation}
Indeed, for $j=1$,
\[
A_{\omega,n}'(y)
=
2W_{\omega,n}(y^2)+4y^2W_{\omega,n}'(y^2).
\]
Since
\[
\int |2W_{\omega,n}(y^2)|\,dy\ll \sqrt n,
\]
and, after the change of variables $x=y^2$,
\[
\int 4y^2|W_{\omega,n}'(y^2)|\,dy
=
\int_{n/2}^n 2\sqrt x\,|W_{\omega,n}'(x)|\,dx
\ll
\sqrt n.
\]
Thus $\|A_{\omega,n}'\|_1\ll \sqrt n$. For $j\geq 2$, the chain rule
shows that $A_{\omega,n}^{(j)}$ is a finite linear combination of terms
of the form
\[
y^{2\ell+1-j}W_{\omega,n}^{(\ell)}(y^2),
\qquad 1\leq \ell\leq j .
\]
Changing variables $x=y^2$ and using
\[
\|W_{\omega,n}^{(\ell)}\|_1\ll_\ell \omega^{1-\ell}
\qquad (\ell\geq 1),
\]
we get
\[
\int
\left|
y^{2\ell+1-j}W_{\omega,n}^{(\ell)}(y^2)
\right|\,dy
\ll_j
n^{\ell-j/2} \omega^{1-\ell}
\ll_j
\frac{n^{j/2}}{\omega^{j-1}},
\]
since $\omega \leq n,\  \ell \le j$. This proves the claimed $L^1$ bounds \eqref{eq:L1_bound} for $A_{\omega,n}^{(j)}$.

Now we consider
\[
1\leq m\leq \frac{h}{3\sqrt n}.
\]
Put
\[
f_m(y)=hy-my^2.
\]
On the support of $A_{\omega,n}$,
\[
|f_m'(y)|
=
|h-2my|
\geq h-2m\sqrt n
\geq \frac{h}{3}.
\]
Also
\[
|f_m''(y)|=2m\leq \frac{2h}{3\sqrt n}.
\]
With one integration by parts, using
\[
\|A_{\omega,n}'\|_1\ll \sqrt n,
\qquad
\|A_{\omega,n}\|_1\ll n,
\]
we obtain
\[
|I_m(h,n)|
\ll
\frac{\sqrt n}{h}
+
\frac{mn}{h^2}
\ll
\frac{\sqrt n}{h}.
\]
Therefore
\[
\sum_{1\leq m\leq h/(3\sqrt n)}
|I_m(h,n)|
\ll 1.
\]

Now Lemma \ref{lemma:int_by_parts} with $J=K+1$ gives
\begin{equation}
\label{eq:Im_pre}
    |I_m(h,n)|
\ll_J
\sum_{a+b=J}
\frac{\|A_{\omega,n}^{(a)}\|_1m^b}{h^{a+2b}}.
\end{equation}
Substituting $m\leq \frac{h}{3\sqrt n}$ and the $L^1$ bounds \eqref{eq:L1_bound} for
$A_{\omega,n}^{(a)}$ into \eqref{eq:Im_pre}, we get
\[
|I_m(h,n)|
\ll_J
\frac{n^{J/2}}{\omega^{J-1}h^J}.
\]
Hence
\[
\sum_{1\leq m\leq \frac{h}{3\sqrt n}}
|I_m(h,n)|
\ll_J
\frac{h}{\sqrt n}
\frac{n^{J/2}}{\omega^{J-1}h^J}
=
\left(\frac{\sqrt n}{\omega h}\right)^{J-1}.
\]
Since $J=K+1$, this gives
\[
\sum_{1\leq m\leq \frac{h}{3\sqrt n}}
|I_m(h,n)|
\ll_K
\left(\frac{\sqrt n}{\omega h}\right)^K.
\]
Combining this estimate with the previous $O(1)$ estimate proves \eqref{eq:sum_m_1}.

We next consider the range
\[
m\geq \frac{\sqrt 2 h}{\sqrt n}.
\]
Make the change of variables $y=\sqrt n\,t$. Then
\[
I_m(h,n)
=
n\int_{\mathbb R}B_{\omega,n}(t)e(\Phi_m(t))\,dt,
\]
where
\[
B_{\omega,n}(t)=2tW_{\omega,n}(nt^2),
\qquad
\Phi_m(t)=h\sqrt n\,t-mnt^2.
\]
The function $B_{\omega,n}$ is supported in $[\frac{\sqrt{2}}{2}, 1]$. 

Moreover,
\[
\|B_{\omega,n}\|_1\ll 1,
\]
and, for every $j\geq 1$,
\begin{equation}
\label{eq:Bj_L1}
    \|B_{\omega,n}^{(j)}\|_1
\ll_j
\left(\frac n\omega\right)^{j-1}.
\end{equation}
The proof of \eqref{eq:Bj_L1} is similar to that of \eqref{eq:L1_bound} and we omit it here.

On the support of $B_{\omega,n}$, the assumption
$m\geq \frac{\sqrt 2 h}{\sqrt n}$ gives
\[
|\Phi_m'(t)|
=
|h\sqrt n-2mnt|
\asymp mn,
\qquad
|\Phi_m''(t)|=2mn.
\]
Invoking Lemma \ref{lemma:int_by_parts} with $J=K+1$, we get
\[
|I_m(h,n)|
\ll_J
n\sum_{a+b=J}
\frac{\|B_{\omega,n}^{(a)}\|_1(mn)^b}{(mn)^{a+2b}}
\ll_J
\frac{1}{m^J \omega^{J-1}}.
\]
Therefore
\begin{equation*}
\begin{aligned}
    \sum_{m \geq \frac{\sqrt{2} h}{\sqrt{n}}} |I_m(h,n)| 
    &\ll_{J} \frac{1}{\omega^{J-1}} \sum_{m \geq \max\{1, \sqrt{2} h / \sqrt{n}\}} \frac{1}{m^J} \\
    &\ll_{J} \frac{1}{\omega^{J-1} (1 + h / \sqrt{n})^{J-1}}.
\end{aligned}
\end{equation*}
Since $J=K+1$, this proves
\eqref{eq:sum_m_2}.

Now let $m<0$, and write $m=-r$ with $r\geq 1$. Then
\[
I_{-r}(h,n)
=
\int_{\mathbb R}A_{\omega,n}(y)e(hy+ry^2)\,dy.
\]
Put
\[
g_r(y)=hy+ry^2.
\]
On the support of $A_{\omega,n}$,
\[
|g_r'(y)|=|h+2ry| \asymp h+r\sqrt n,
\qquad
|g_r''(y)|=2r.
\]
Invoking Lemma \ref{lemma:int_by_parts},
\[
|I_{-r}(h,n)|
\ll_J
\sum_{a+b=J}
\frac{\|A_{\omega,n}^{(a)}\|_1r^b}{(h+r\sqrt n)^{a+2b}}.
\]
Since
\[
r\leq \frac{h+r\sqrt n}{\sqrt n},
\]
and recalling \eqref{eq:L1_bound},
\[
\|A_{\omega,n}^{(a)}\|_1
\ll_a
\begin{cases}
n, & a=0,\\
n^{a/2} \omega^{1-a}, & a\geq 1,
\end{cases}
\]
we get
\[
|I_{-r}(h,n)|
\ll_J
\frac{n^{J/2}}{\omega^{J-1}(h+r\sqrt n)^J}.
\]
Taking $J=K+1$ and summing over $r\geq 1$ gives
\[
\sum_{r\geq 1}|I_{-r}(h,n)|
\ll_K
\frac{n^{(K+1)/2}}{\omega^K}
\sum_{r\geq 1}\frac1{(h+r\sqrt n)^{K+1}}.
\]
Since
\[
h+r\sqrt n=\sqrt n\left(\frac h{\sqrt n}+r\right),
\]
we have
\[
\sum_{r\geq 1}\frac1{(h+r\sqrt n)^{K+1}}
\ll
\frac{1}{n^{(K+1)/2}(1+h/\sqrt n)^K}.
\]
Therefore we obtain \eqref{eq:sum_m_minus}.

It remains to treat $m=0$. In this case
\[
I_0(h,n)
=
\int_{\mathbb R}A_{\omega,n}(y)e(hy)\,dy.
\]
Lemma \ref{lemma:int_by_parts} gives
\[
|I_0(h,n)|
\ll_J
h^{-J}\|A_{\omega,n}^{(J)}\|_1.
\]
With $J=1$, this gives
\[
|I_0(h,n)|\ll \frac{\sqrt n}{h}.
\]
With $J=K+1$, it gives
\[
|I_0(h,n)|
\ll_K
\frac{n^{(K+1)/2}}{\omega^K h^{K+1}}
=
\frac{\sqrt n}{h}
\left(\frac{\sqrt n}{\omega h}\right)^K.
\]
Combining the two estimates, we obtain \eqref{eq:sum_m_0}.

Finally, putting $s=h/\sqrt {n}$, \eqref{eq:exc_m} is from
$$
\frac{1}{\omega^K(1+s)^K}
\ll
\left(1+\frac1s\right)
\min\left\{1,\frac{1}{(\omega s)^K}\right\}.
$$

We conclude the proof.

\end{proof}

\subsection{Dual approximation of the smoothed sum}

Recalling \eqref{eq:S_sum_I} and summing up Lemma \ref{lem:Im-stationary} and Lemma \ref{lem:Im-nonstat}, we obtain the following proposition.

\begin{prop}
\label{prop:dual-expansion-Sw}
Let $1\leq \omega \leq n/10$, and let $W_{\omega,n}$
be the smooth cut-off constructed in Lemma~\ref{lem:w def}. For $h\geq 1$, recall
\[
S_\omega(h,n)=\sum_{a\in\mathbb Z} W_{\omega,n}(a)e(h\sqrt a).
\]
Then, for every fixed integer $K\geq 1$,
$$
\begin{aligned}
  S_\omega(h,n) 
  &= C \sum_{\substack{m\in\mathbb Z \\ h/(3\sqrt n) \le m \le \sqrt2 h/\sqrt n}} \frac{h}{m^{3/2}} W_{\omega,n}\!\left(\frac{h^2}{4m^2}\right) e\!\left(\frac{h^2}{4m}\right) \\
  &\qquad \qquad \qquad + O_K(R_{\omega, K}(h,n)),
\end{aligned}
$$
where
\[
C=2^{-1/2}e(-1/8),
\]
and
$$
R_{\omega,K}(h,n)=
\frac{n^{7/4}}{\omega^2 h^{1/2}}
+
\left(1+\frac{\sqrt n}{h}\right)
\min\left\{1,\left(\frac{\sqrt n}{\omega h}\right)^K\right\}
.
$$
\end{prop}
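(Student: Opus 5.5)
We start from the Poisson expansion \eqref{eq:S_sum_I}, $S_\omega(h,n)=\sum_{m\in\mathbb Z}I_m(h,n)$, and split the range of $m$ into the stationary window
\[
\mathcal M_h=\left[\frac{h}{3\sqrt n},\,\frac{\sqrt2\,h}{\sqrt n}\right]\cap\mathbb Z
\]
and its complement. Every $m\in\mathcal M_h$ is positive and satisfies \eqref{eq:stationary range}, so Lemma~\ref{lem:Im-stationary} applies to each such term. In its second (equivalent) form it gives
\[
I_m(h,n)=C\,\frac{h}{m^{3/2}}W_{\omega,n}\!\left(\frac{h^2}{4m^2}\right)e\!\left(\frac{h^2}{4m}\right)+O\!\left(\frac{n^{3/2}}{m^{3/2}\omega^2}\right),
\]
with $C=2^{-1/2}e(-1/8)$, since $e(h^2/(4m)-1/8)(2m)^{-1/2}\,h/m$ equals $C\,h\,m^{-3/2}e(h^2/(4m))$. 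Summing over $m\in\mathcal M_h$ produces exactly the main term in the statement. The terms with $m\notin\mathcal M_h$ are controlled by \eqref{eq:exc_m} of Lemma~\ref{lem:Im-nonstat}, which gives precisely the second summand of $R_{\omega,K}(h,n)$.

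It remains to bound the accumulated stationary-phase errors,
\[
\sum_{m\in\mathcal M_h}\frac{n^{3/2}}{m^{3/2}\omega^2}.
\]
If $\mathcal M_h$ is empty there is nothing to bound. Otherwise every $m\in\mathcal M_h$ satisfies $m\ge 1$ and $m\asymp h/\sqrt n$, and $\#\mathcal M_h\ll 1+h/\sqrt n$. Nonemptiness forces $\sqrt2 h/\sqrt n\ge 1$, so $h/\sqrt n\gg 1$ and $\#\mathcal M_h\ll h/\sqrt n$. Hence the sum is
\[
\ll \frac{h}{\sqrt n}\cdot\frac{n^{3/2}}{\omega^2}\left(\frac{\sqrt n}{h}\right)^{3/2}=\frac{n^{7/4}}{\omega^2h^{1/2}},
\]
which is the first summand of $R_{\omega,K}(h,n)$.

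The only point that needs care is that the $m^{-3/2}$ decay is uniform. When $h<\sqrt n/\sqrt2$ the window $\mathcal M_h$ is empty and the estimate comes entirely from Lemma~\ref{lem:Im-nonstat}. Otherwise the smallest admissible $m$ is comparable to $h/\sqrt n$: it is at least $\max\{1,h/(3\sqrt n)\}\gg h/\sqrt n$ because $h/\sqrt n\gg1$. With this, combining the two contributions completes the proof, and no step beyond this bookkeeping is a genuine obstacle.
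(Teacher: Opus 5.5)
Your proposal is correct and follows the paper's argument exactly. The paper also proves the proposition by inserting Lemma~\ref{lem:Im-stationary} for $m$ in the window $[h/(3\sqrt n),\sqrt2\,h/\sqrt n]$ and \eqref{eq:exc_m} for the remaining $m$ into the Poisson expansion \eqref{eq:S_sum_I}, while your count of the window ($\ll h/\sqrt n$ terms, each with $m\gg h/\sqrt n$, giving total error $\ll n^{7/4}\omega^{-2}h^{-1/2}$) spells out the summation step the paper leaves implicit.
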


\subsection{Weighted dual moments}

We reduce the smoothed moment to a dual large-sieve estimate.

Invoking Proposition \ref{prop:dual-expansion-Sw}, for $h\sim H$, one has
\begin{equation}
 \label{eq:smooth-poisson-higher-moment}
\begin{split}
    S_\omega(h,n)
  =&
  C
  \sum_{h/(3\sqrt n)\le m\le \sqrt 2 h/\sqrt n}
  \frac{h}{m^{3/2}}
  W_{\omega,n}\left(\frac{h^2}{4m^2}\right)
  e\left(\frac{h^2}{4m}\right)\\
  &\qquad  \qquad +
  O_K(R_{\omega,K}(H,n)),
\end{split}
\end{equation}
where
\begin{equation*}
  C=2^{-1/2}e(-1/8),
\end{equation*}
and
\begin{equation*}
  R_{\omega,K}(H,n)
  =
  \frac{n^{7/4}}{\omega^2H^{1/2}}
  +
  \left(1+\frac{\sqrt n}{H}\right)
  \min\left\{
    1,
    \left(\frac{\sqrt n}{\omega H}\right)^K
  \right\}.
\end{equation*}
Put
\begin{equation*}
  M=\frac{H}{\sqrt n}.
\end{equation*}
For $h\sim H$, the $m$-range in \eqref{eq:smooth-poisson-higher-moment}
is $m\asymp M$. Moreover
\begin{equation*}
  \frac{h}{m^{3/2}}
  \asymp
  \frac{H}{M^{3/2}}.
\end{equation*}
Define
\begin{equation*}
  B=\frac{H}{M^{3/2}}
  =
  n^{3/4}H^{-1/2}.
\end{equation*}
Absorbing the smooth amplitude into the weight
\begin{equation}
\label{eq:weight_V}
  V_{\omega;h,m}
  =
  \frac{h}{H}
  \left(\frac{M}{m}\right)^{3/2}
  W_{\omega,n}\left(\frac{h^2}{4m^2}\right),
\end{equation}
we may rewrite \eqref{eq:smooth-poisson-higher-moment} as
\begin{equation}
  \label{eq:dual-form-higher-moment}
  S_\omega(h,n)
  =
  CB
  \sum_{m\asymp M}
  V_{\omega;h,m}
  e\left(\frac{h^2}{4m}\right)
  +
  O_K(R_{\omega,K}(H,n)).
\end{equation}
The normalised weights satisfy
\begin{equation*}
  |V_{\omega;h,m}|\ll 1.
\end{equation*}

We define the dual moment
\begin{equation*}
  \mathcal{D}_{2r}(H,M;\omega)
  =
  \sum_{h\sim H}
  \left|
    \sum_{m\asymp M}
    V_{\omega;h,m}
    e\left(\frac{h^2}{4m}\right)
  \right|^{2r},
\end{equation*}
where $V_{\omega;h,m}$ are defined in \eqref{eq:weight_V}, and
$$
M=\frac{H}{\sqrt n}.
$$

We now isolate the part of the argument which is still missing in general. The following is the expected weighted dual large sieve estimate.

\begin{conj}[Weighted dual quadratic large sieve]
\label{conj:D2r}
Let $r\ge 1$ be fixed, let $\delta>0$, and suppose that $H\ge n^{1/2+\delta}$. Put $M=H/\sqrt n$. Then, for every $\varepsilon>0$,
$$
    \mathcal{D}_{2r}(H,M;\omega) \ll_{r,\varepsilon,\delta} H M^r n^\varepsilon \Phi_r(\omega),
$$
uniformly for $1\le \omega\le n/10$, where the dependence on $n$ is suppressed in the notation. In the ideal case one expects
\begin{equation}
 \label{eq:ideal-Phi}
  \Phi_r(\omega)\ll n^\varepsilon.
\end{equation}
\end{conj}

\begin{prop}[Conditional reduction]
\label{prop:DtoM}
Assume Conjecture \ref{conj:D2r}. If the ideal estimate \eqref{eq:ideal-Phi} holds, then for $H\ge n^{1/2+\delta}$,
$$
    M_{2r}(H,n)\ll_{r,\varepsilon,\delta} Hn^{r+\varepsilon}.
$$
\end{prop}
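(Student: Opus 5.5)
We start from \eqref{eq:sharp-to-smooth-higher-moment}, which gives
\[
M_{2r}(H,n)\ll_r M_{2r,\omega}(H,n)+H\omega^{2r},
\]
and then insert the dual form \eqref{eq:dual-form-higher-moment} of $S_\omega(h,n)$. Using $|x+y|^{2r}\ll_r |x|^{2r}+|y|^{2r}$ and summing over $h\sim H$, we get
\[
M_{2r,\omega}(H,n)\ll_{r,K} B^{2r}\,\mathcal D_{2r}(H,M;\omega)+H\,R_{\omega,K}(H,n)^{2r},
\qquad B=n^{3/4}H^{-1/2},\quad M=H/\sqrt n .
\]
Under Conjecture~\ref{conj:D2r} with the ideal bound \eqref{eq:ideal-Phi}, the main term is
\[
B^{2r}HM^r n^{2\varepsilon}=n^{3r/2}H^{-r}\cdot H\cdot H^r n^{-r/2}\,n^{2\varepsilon}=Hn^{r+2\varepsilon}.
\]
This is exactly the target, so everything reduces to choosing $\omega$ so that both error terms $H\omega^{2r}$ and $HR_{\omega,K}^{2r}$ are $\ll Hn^{r+\varepsilon}$.

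Because \eqref{eq:ideal-Phi} holds uniformly in $1\le\omega\le n/10$, $\omega$ is a free parameter. We take $\omega=n^{1/2}$, which is admissible for large $n$, so that $H\omega^{2r}=Hn^{r}$. We then bound the two pieces of $R_{\omega,K}(H,n)$ in turn.

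\emph{First piece.} We have $n^{7/4}\omega^{-2}H^{-1/2}=n^{3/4}H^{-1/2}$, and since $H\ge n^{1/2+\delta}$ this is $\le n^{1/2-\delta/2}\le n^{1/2}$. Its $2r$-th power is therefore at most $n^r$.

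\emph{Second piece.} Since $H\ge\sqrt n$, the factor $1+\sqrt n/H\le 2$. The minimum is at most $(\sqrt n/(\omega H))^K=H^{-K}\le 1$. So this piece is $O(1)$, and its $2r$-th power contributes $O(H)$.

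Collecting terms gives $M_{2r}(H,n)\ll Hn^{r+2\varepsilon}$, and renaming $\varepsilon$ finishes the argument. The implied constants depend on $r,\varepsilon,\delta$ and on a fixed $K$, say $K=1$.

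The main point that needs care is the legitimacy of \eqref{eq:dual-form-higher-moment} uniformly for $h\sim H$. The error term $R_{\omega,K}(h,n)$ in Proposition~\ref{prop:dual-expansion-Sw} must be replaced by $R_{\omega,K}(H,n)$. This is fine: $R$ is decreasing in $h$ and $h\ge H/2$, so $R_{\omega,K}(h,n)\ll_K R_{\omega,K}(H,n)$.

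We also need that the $m$-sum in the dual expansion ranges over $m\asymp M$ with bounded weights $V_{\omega;h,m}$. Both facts were already recorded before the statement.

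Beyond these two checks, the proof is just the balancing of exponents above.
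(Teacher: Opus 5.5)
Your proposal is correct and follows the paper's proof: compare the sharp and smoothed moments, insert the dual expansion using $|x+y|^{2r}\ll_r|x|^{2r}+|y|^{2r}$, use the identity $B^{2r}HM^r=Hn^r$, and choose $\omega$ so that $H\omega^{2r}$ and $HR_{\omega,K}(H,n)^{2r}$ are both $\ll Hn^r$. The only cosmetic difference is that you fix $\omega=n^{1/2}$ (the top of the paper's admissible range $n^{5/8}H^{-1/4}\ll\omega\le n^{1/2}$) and $K=1$, observing that the second part of $R_{\omega,K}$ is then $O(1)$, whereas the paper takes $K$ large to make it $O(n^{-A})$; both suffice.
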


\begin{proof}
By \eqref{eq:dual-form-higher-moment} and the inequality $|x+y|^{2r}\ll_r |x|^{2r}+|y|^{2r}$,
$$
    M_{2r,\omega}(H,n) \ll_r B^{2r}\mathcal{D}_{2r}(H,M;\omega)+H R_{\omega,K}(H,n)^{2r}.
$$
Using Conjecture \ref{conj:D2r} and $B=H/M^{3/2}$, we get
$$
    B^{2r}H M^r = \left(\frac{H}{M^{3/2}}\right)^{2r}HM^r = H^{2r+1}M^{-2r} = Hn^r.
$$
Thus
$$
    M_{2r,\omega}(H,n) \ll_{r,\varepsilon,\delta} Hn^{r+\varepsilon}\Phi_r(\omega) + H R_{\omega,K}(H,n)^{2r}.
$$
Combining this with \eqref{eq:sharp-to-smooth-higher-moment}, we obtain
$$
    M_{2r}(H,n) \ll_{r,\varepsilon,\delta} Hn^{r+\varepsilon}\Phi_r(\omega) + H\omega^{2r} + H R_{\omega,K}(H,n)^{2r}.
$$
Now choose $\omega$ such that
\begin{equation}
    \label{eq:omega-admissible-range}
    n^{5/8}H^{-1/4}\ll \omega\le n^{1/2}.
\end{equation}
This interval is non-empty for $H\ge n^{1/2+\delta}$. For such $\omega$, the sharp cut-off contribution satisfies
$$
    H\omega^{2r}\le Hn^r,
$$
and the first part of $R_{\omega,K}$ is $O(n^{1/2})$. The remaining part of $R_{\omega,K}$ is $O(n^{-A})$, for any prescribed $A>0$, provided $K$ is chosen sufficiently large in terms of $A$ and $\delta$. Hence, under $\Phi_r(\omega)\ll n^\varepsilon$, we get
$$
    M_{2r}(H,n)\ll_{r,\varepsilon,\delta} Hn^{r+\varepsilon}.
$$
\end{proof}

Thus the remaining task is not to prove the moment
bound directly, but to prove a suitable estimate for the dual quantity
$\mathcal{D}_{2r}(H,M;\omega)$ with
$\Phi_r(\omega)\ll n^\varepsilon$ in the admissible range
\eqref{eq:omega-admissible-range}.

Recall that $V_{\omega;h,m}$ is defined by \eqref{eq:weight_V}. The next lemma shows that the weights $V_{\omega;h,m}$ have uniformly bounded variation.

\begin{lemma}[Bounded variation of the dual weights]
\label{lem:BV}
We have
\begin{equation}
\label{eq:bv-single-dual-weight}
    \sup_{m\asymp M} \left( \sup_{h\sim H}|V_{\omega;h,m}| + \operatorname{Var}_{h\sim H} V_{\omega;h,m} \right) \ll 1 .
\end{equation}
Consequently, for fixed $r\ge 1$, if
$$
    V_{\omega;\mathbf m,\mathbf n}(h) = \prod_{i=1}^r V_{\omega;h,m_i} \prod_{j=1}^r V_{\omega;h,n_j}, \qquad m_i,n_j\asymp M,
$$
then
\begin{equation}
\label{eq:bv-product-dual-weight}
    \sup_{\mathbf m,\mathbf n} \left( \sup_{h\sim H}|V_{\omega;\mathbf m,\mathbf n}(h)| + \operatorname{Var}_{h\sim H} V_{\omega;\mathbf m,\mathbf n}(h) \right) \ll_r 1 .
\end{equation}
\end{lemma}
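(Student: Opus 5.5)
We treat $V_{\omega;h,m}$, for each fixed $m\asymp M$, as a function of the real variable $h\in[H/2,H]$ and split it as
\[
V_{\omega;h,m}=\frac{h}{H}\left(\frac{M}{m}\right)^{3/2}\cdot W_{\omega,n}\!\left(\frac{h^2}{4m^2}\right).
\]
The first factor is positive and monotone in $h$. It is bounded by $(M/m)^{3/2}\ll 1$, since $m\asymp M$ (indeed $m\in[h/(3\sqrt n),\sqrt2 h/\sqrt n]$ with $h\asymp H$). Hence its supremum and its total variation over $h\sim H$ are both $O(1)$. The second factor satisfies $0\le W_{\omega,n}\le 1$ by Lemma~\ref{lem:w def}, so $\sup_h|V_{\omega;h,m}|\ll 1$ uniformly in $m$.

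For the variation of the second factor we exploit the structure of $W_{\omega,n}$ rather than derivative bounds. Derivative bounds would give $\int |\partial_h W_{\omega,n}(h^2/4m^2)|\,dh \ll \omega^{-1}\cdot(\text{length of transition in }h)$, which also works, but the cleaner route is monotonicity. The map $h\mapsto x=h^2/(4m^2)$ is increasing on $h>0$. By the construction in Lemma~\ref{lem:w def}, $W_{\omega,n}(x)=\psi((x-n/2)/\omega)\psi((n-x)/\omega)$ is nondecreasing on $[n/2,n/2+\omega]$, equal to $1$ on the plateau, and nonincreasing on $[n-\omega,n]$. We need to check that $\psi$ is monotone. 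This follows since $\psi(t)=1/(1+\varphi(1-t)/\varphi(t))$ and $\varphi(1-t)/\varphi(t)$ is decreasing on $(0,1)$. Thus $x\mapsto W_{\omega,n}(x)$ is unimodal with values in $[0,1]$, so its total variation over any interval is at most $2$. Composing with a monotone map preserves total variation, which gives $\operatorname{Var}_{h\sim H}W_{\omega,n}(h^2/4m^2)\le 2$.

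The product rule for variation, $\operatorname{Var}(fg)\le \sup|f|\operatorname{Var}g+\sup|g|\operatorname{Var}f$, then yields \eqref{eq:bv-single-dual-weight} uniformly in $m\asymp M$, and also uniformly in $\omega$ and $n$. The product bound \eqref{eq:bv-product-dual-weight} follows by iterating the same inequality over the $2r$ factors: the supremum of the product is at most $C^{2r}$, and its variation is at most $2r\,C^{2r}$. Both constants depend only on $r$.

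The only step requiring care is the monotonicity of $\psi$, equivalently the unimodality of $W_{\omega,n}$. If we preferred to avoid it, the fallback is the derivative bound. The $h$-variation equals $\int |W'_{\omega,n}(x)|\,dx$ after the substitution $x=h^2/4m^2$, and this is $\|W'_{\omega,n}\|_1\ll 1$ by Lemma~\ref{lem:w def}, so the fallback gives the same conclusion.
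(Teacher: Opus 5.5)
Your proof is correct and follows the paper's argument: the same factorisation $V_{\omega;h,m}=\frac{h}{H}(M/m)^{3/2}\,W_{\omega,n}(h^2/4m^2)$, the product inequality for total variation, and iteration over the $2r$ factors. The only difference is in bounding $\operatorname{Var}_{h\sim H}W_{\omega,n}(h^2/4m^2)$. Your main line gets the bound $2$ from unimodality of the specific cut-off, via monotonicity of $\psi$. The paper instead substitutes $x=h^2/4m^2$ and uses $\|W'_{\omega,n}\|_1\ll 1$, which is exactly your fallback and relies only on the stated conclusions of Lemma~\ref{lem:w def}.
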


\begin{proof}
Write
$$
    I_H=[H/2,H]
$$
for the dyadic interval $h\sim H$. We first prove \eqref{eq:bv-single-dual-weight}. Put
$$
    x_m(h)=\frac{h^2}{4m^2}, \qquad p_m(h)= \frac{h}{H} \left(\frac{M}{m}\right)^{3/2}.
$$
Then
$$
    V_{\omega;h,m} = p_m(h) W_{\omega,n}(x_m(h)).
$$

Since $m\asymp M$ and $h\sim H$, we have
$$
    \|p_m\|_{L^\infty(I_H)}\ll 1, \qquad \operatorname{Var}_{I_H} (p_m) \le \int_{I_H} |p_m'(h)|\,dh \ll 1 .
$$
It remains to control the variation of
$$
    W_{\omega,n}(x_m(h)).
$$
The function $x_m(h)$ is increasing on $I_H$, and
$$
    x_m'(h)=\frac{h}{2m^2}>0.
$$
Thus, by the change of variables $x=x_m(h)$,
$$
\begin{aligned}
    \operatorname{Var}_{I_H} W_{\omega,n}(x_m(h)) 
    &\le \int_{I_H} \left| W_{\omega,n}'(x_m(h))x_m'(h) \right|\,dh  \\
    &\le \int_{\mathbb R} |W_{\omega,n}'(x)|\,dx .
\end{aligned}
$$
Recall Lemma \ref{lem:w def}, we know
$$
    \int_{\mathbb R}|W_{\omega,n}'(x)|\,dx\ll 1.
$$
We conclude that
$$
    \left\|W_{\omega,n}(x_m(\cdot))\right\|_{L^\infty(I_H)} + \operatorname{Var}_{I_H} W_{\omega,n}(x_m(\cdot)) \ll 1 .
$$
Using
$$
    \operatorname{Var}_{I_H}(fg) \le \|f\|_{L^\infty(I_H)}\operatorname{Var}_{I_H}(g) + \|g\|_{L^\infty(I_H)}\operatorname{Var}_{I_H}(f),
$$
we obtain
$$
    \sup_{h\sim H}|V_{\omega;h,m}| + \operatorname{Var}_{h\sim H}V_{\omega;h,m} \ll 1,
$$
uniformly for $m\asymp M$. This proves \eqref{eq:bv-single-dual-weight}.

Now let
$$
    F(h)=V_{\omega;\mathbf m,\mathbf n}(h) = \prod_{\ell=1}^{2r}F_\ell(h),
$$
where each $F_\ell$ is one of the factors $V_{\omega;h,m_i}$ or $V_{\omega;h,n_j}$. By \eqref{eq:bv-single-dual-weight},
$$
    \|F_\ell\|_{L^\infty(I_H)} + \operatorname{Var}_{I_H}F_\ell \ll 1
$$
uniformly in the corresponding index. The elementary product inequality for functions of bounded variation gives
$$
    \operatorname{Var}_{I_H}\left(\prod_{\ell=1}^{2r}F_\ell\right) \le \sum_{\ell=1}^{2r} \operatorname{Var}_{I_H}(F_\ell) \prod_{k\ne \ell} \|F_k\|_{L^\infty(I_H)} .
$$
Therefore
$$
    \sup_{h\sim H}|V_{\omega;\mathbf m,\mathbf n}(h)| + \operatorname{Var}_{h\sim H}V_{\omega;\mathbf m,\mathbf n}(h) \ll_r 1,
$$
uniformly for all $m_i,n_j\asymp M$. This proves \eqref{eq:bv-product-dual-weight}.

\end{proof}

\begin{lem}[Partial summation reduction]
Let \(r\geq 1\) be fixed,
\begin{equation*}
  \mathbf m=(m_1,\ldots,m_r),
  \qquad
  \mathbf n=(n_1,\ldots,n_r).
\end{equation*}
Put
\begin{equation*}
  V_{\omega;\mathbf m,\mathbf n}(h)
  =
  \prod_{i=1}^r V_{\omega;h,m_i}
  \prod_{j=1}^r \overline{V_{\omega;h,n_j}},
\end{equation*}
and
\begin{equation*}
    \Delta(\mathbf{m}, \mathbf{n}) = \sum_{i=1}^r \frac{1}{m_i} - \sum_{j=1}^r \frac{1}{n_j}.
\end{equation*}
Then
\begin{equation}
\label{eq:BV-partial-summation-offdiag}
\begin{split}
&\left| \sum_{h\sim H} V_{\omega;\mathbf{m},\mathbf{n}}(h) e\left(\frac{h^2}{4}\Delta(\mathbf{m},\mathbf{n})\right) \right| \\
&\qquad \ll_{r} \sup_{J\subseteq [H/2,H]} \left| \sum_{h\in J} e\left(\frac{h^2}{4}\Delta(\mathbf{m},\mathbf{n})\right) \right|.
\end{split}
\end{equation}
where \(J\) runs over intervals with integer endpoints.
\end{lem}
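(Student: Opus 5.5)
This is a direct application of discrete Abel summation, with Lemma~\ref{lem:BV} controlling the weight. Fix $\mathbf m,\mathbf n$ and write
\[
F(h)=V_{\omega;\mathbf m,\mathbf n}(h),\qquad a_h=e\!\left(\frac{h^2}{4}\Delta(\mathbf m,\mathbf n)\right),
\]
and let the integers in $[H/2,H]$ be $h_0<h_1<\dots<h_L$. Put $A(t)=\sum_{h_0\le h\le t}a_h$ and
\[
\Sigma=\sup_{J\subseteq[H/2,H]}\Bigl|\sum_{h\in J}a_h\Bigr|,
\]
so that every partial sum $A(h_\ell)$ is one of the sums in the supremum and satisfies $|A(h_\ell)|\le\Sigma$.

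Summation by parts gives
\[
\sum_{\ell=0}^{L}F(h_\ell)a_{h_\ell}
= F(h_L)A(h_L)-\sum_{\ell=0}^{L-1}A(h_\ell)\bigl(F(h_{\ell+1})-F(h_\ell)\bigr).
\]
Taking absolute values, we obtain
\[
\Bigl|\sum_{h\sim H}F(h)a_h\Bigr|\le\Sigma\Bigl(\sup_{h\sim H}|F(h)|+\sum_{\ell}|F(h_{\ell+1})-F(h_\ell)|\Bigr).
\]

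It remains to bound the bracket. The discrete variation along the integer points is at most the total variation of $F$ on $[H/2,H]$. Here $F$ is a product of $2r$ factors, each of the form $V_{\omega;h,m_i}$ or $\overline{V_{\omega;h,n_j}}$. Complex conjugation preserves both the sup norm and the variation. Therefore \eqref{eq:bv-product-dual-weight}, whose proof via the product inequality applies verbatim with conjugated factors, shows that the bracket is $\ll_r 1$ uniformly in $\mathbf m,\mathbf n$. This yields \eqref{eq:BV-partial-summation-offdiag}.

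The only point needing care is that Lemma~\ref{lem:BV} is stated for the continuous variable $h$. This causes no difficulty, since the discrete variation over a subset of points never exceeds the total variation. Nothing beyond this is a real obstacle.
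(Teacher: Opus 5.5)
Your proposal is correct and follows the paper's route: the paper's proof is the one-line remark that the lemma is a direct consequence of Lemma~\ref{lem:BV} and partial summation, and you have written out the Abel summation and the comparison of discrete with continuous variation explicitly. Your handling of the conjugated factors is fine, and in fact unnecessary, since the weights $V_{\omega;h,m}$ defined in \eqref{eq:weight_V} are real-valued.
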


\begin{proof}
This is a direct consequence of Lemma \ref{lem:BV} and partial
summation.
\end{proof}

\subsection{Proof of the second-moment theorem}

We first prove the weighted dual quadratic large-sieve estimate in the case $r=1$. This is the input needed to obtain the expected second moment bound in the smoothing range $n^{1/2+\delta} \ll H \ll n$.

\begin{prop}[The case $r=1$ of the weighted dual quadratic large sieve]
\label{prop:dual-large-sieve-r1}
Let $n^{1/2}\le H\le n$, and put
$$
    M=\frac{H}{\sqrt n}.
$$
Let $V_{\omega;h,m}$ be defined by \eqref{eq:weight_V}, and let
$$
    \mathcal{D}_2(H,M;\omega) = \sum_{h\sim H} \left| \sum_{m\asymp M} V_{\omega;h,m} e\left(\frac{h^2}{4m}\right) \right|^2 .
$$
Then, for every fixed $\varepsilon>0$,
$$
    \mathcal{D}_2(H,M;\omega) \ll_\varepsilon HM n^\varepsilon ,
$$
uniformly in $1\le \omega\le n/10$.
\end{prop}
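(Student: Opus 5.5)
Open the square and interchange the order of summation:
\[
\mathcal D_2(H,M;\omega)=\sum_{m_1,m_2\asymp M}\ \sum_{h\sim H} V_{\omega;h,m_1}\overline{V_{\omega;h,m_2}}\, e\Bigl(\frac{h^2}{4}\Bigl(\frac1{m_1}-\frac1{m_2}\Bigr)\Bigr).
\]
The diagonal $m_1=m_2$ contributes $O(HM)$, because $|V_{\omega;h,m}|\ll 1$. For the off-diagonal pairs, apply the partial summation reduction \eqref{eq:BV-partial-summation-offdiag} with $r=1$. This removes the weights uniformly in $\omega$, since Lemma~\ref{lem:BV} gives bounded variation independently of $\omega$. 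We are left with bounding
\[
\sup_{J\subseteq[H/2,H]}\Bigl|\sum_{h\in J} e(\theta h^2)\Bigr|,\qquad \theta=\frac{m_2-m_1}{4m_1m_2}.
\]
Here $\theta=a/q$ in lowest terms, with $q\mid 4m_1m_2$, so $q\ll M^2$. Also $0<|\theta|\ll |m_1-m_2|/M^2\le 1/M\le 1$.

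For the quadratic sum I would use the standard Weyl bound, which comes from squaring and reducing to linear sums:
\[
\sum_{h\in J} e(h^2 a/q)\ll (Hq^{-1/2}+q^{1/2})(\log q)^{1/2}.
\]
Write $m_1m_2/\gcd(m_2-m_1,4m_1m_2)\asymp q$. With $d=\gcd(m_1,m_2)$, $m_i=dm_i'$ and $k=m_2'-m_1'$, we get $q\asymp d\,m_1'm_2'/\gcd(k,4)$, up to small factors. So $q\asymp M^2/d$ essentially, and the Weyl bound is $\ll n^{\varepsilon}(H d^{1/2}/M+M/d^{1/2})$.

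Next comes the denominator count. The number of pairs $(m_1,m_2)$ with $\gcd=d$ is $\ll M^2/d^2$. Summing the first term over pairs gives $\sum_d (M^2/d^2)(Hd^{1/2}/M)\ll HM$. Summing the second term gives $\sum_d (M^2/d^2)(M/d^{1/2})\ll M^3$. Since $M=H/\sqrt n$ and $H\le n$, we have $M^2\le H^2/n\le H$, so $M^3\le HM$. Both contributions are therefore acceptable, and the total is $\ll HMn^\varepsilon$.

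The main obstacle is making the complete-sum Weyl bound valid for incomplete intervals $J$ of length up to $H$, which may be far smaller or far larger than $q$. I would first try the van der Corput squaring $|S|^2\le H+2\sum_{1\le \ell<|J|}\min(H,\|2\ell\theta\|^{-1})$ together with the standard count of $\ell$ modulo $q$. This gives $|S|^2\ll (H^2/q+H+q)\log q$, and that suffices for the bounds above since $H\le H^2/q+q$. The remaining bookkeeping point is that $\gcd(m_2-m_1,m_1m_2)=d\cdot\gcd(k,d)$ when $\gcd(m_1',m_2')=1$. The extra factor $\gcd(k,d)$ can enlarge the reduction of $q$. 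I would handle it by additionally summing over $e=\gcd(k,d)$: the number of pairs with a given $e$ drops by a factor $1/e$, which compensates the loss $e^{1/2}$, at the cost of a divisor factor $n^{\varepsilon}$.
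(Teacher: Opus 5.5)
Your proposal is correct and follows essentially the same route as the paper. You expand the square, bound the diagonal by $HM$, and remove the weights by bounded variation. You then write $m_i=dm_i'$, with $\gcd(k,d)$ playing the role of the paper's $(v-u,4g)$ (and $d$ playing the paper's $g$), apply the quadratic Weyl bound, and count pairs in residue classes to reach $n^{\varepsilon}(HM+M^3)\ll HMn^{\varepsilon}$.

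The differences are cosmetic. You derive the Weyl bound by van der Corput squaring rather than citing it. Your first formula $q\asymp dm_1'm_2'/\gcd(k,4)$ should read $q=4dm_1'm_2'/\gcd(k,4d)$, which your later summation over $e=\gcd(k,d)$ already corrects.
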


\begin{proof}
Expanding the square, we get
$$
    \mathcal{D}_2(H,M;\omega) = \sum_{m_1,m_2\asymp M} \sum_{h\sim H} V_{\omega;h,m_1} V_{\omega;h,m_2} e\left( \frac{h^2}{4} \left( \frac1{m_1}-\frac1{m_2} \right) \right).
$$
The diagonal contribution $m_1=m_2$ is
$$
    \ll \sum_{m\asymp M} \sum_{h\sim H} |V_{\omega;h,m}|^2 \ll HM.
$$

It remains to treat $m_1\ne m_2$. Put
$$
    \alpha = \frac14 \left( \frac1{m_1}-\frac1{m_2} \right).
$$
By \eqref{eq:BV-partial-summation-offdiag},
$$
\begin{aligned}
    \left| \sum_{h\sim H} V_{\omega;h,m_1} V_{\omega;h,m_2} e(\alpha h^2) \right| &\ll \sup_{J\subseteq I_H} \left| \sum_{h\in J} e(\alpha h^2) \right|,
\end{aligned}
$$
where $I_H$ denotes the dyadic interval $h\sim H$, and $J$ runs over intervals with integer endpoints.

We now estimate the last quadratic exponential sum. Write
$$
    m_1=gu,\qquad m_2=gv,\qquad (u,v)=1,
$$
so that $u,v\asymp U:=M/g$. Then
$$
    \alpha = \frac{v-u}{4guv}.
$$
Let
$$
    d=(v-u,4g).
$$
Since $(u,v)=1$, we have $(v-u,uv)=1$. Therefore, after reducing $\alpha$ to lowest terms, its denominator is
$$
    q=\frac{4guv}{d}.
$$
In particular,
$$
    q\asymp \frac{gU^2}{d} = \frac{M^2}{gd}.
$$
The quadratic Weyl bound \cite[Theorem 8.1]{IwKow} gives, uniformly for intervals $J\subseteq I_H$,
\begin{equation*}
    \sum_{h\in J}e(\alpha h^2) \ll Hq^{-1/2}+q^{1/2}\log(2 q).
\end{equation*}
Since $H\le n$, we have $q\ll n$, and hence the logarithm may be absorbed into $n^\varepsilon$. Thus
$$
    \sum_{h\in J}e(\alpha h^2) \ll_\varepsilon n^\varepsilon \left( H\frac{\sqrt{gd}}{M} + \frac{M}{\sqrt{gd}} \right).
$$

For fixed $g$ and $d\mid 4g$, the number of pairs $u,v\asymp U$ with $v\equiv u\pmod d$ is
$$
    \ll \frac{U^2}{d}+U.
$$
Therefore the total off-diagonal contribution is
\begin{equation*}
\begin{aligned}
    \mathcal{D}_2^{\mathrm{off}}(H,M;\omega)
    &\ll_{\varepsilon} n^{\varepsilon}
      \sum_{g\ll M} \sum_{d\mid 4g}
      \left( H\frac{\sqrt{gd}}{M} + \frac{M}{\sqrt{gd}} \right)\left( \frac{U^2}{d}+U \right) \\
    &\ll n^{\varepsilon} \sum_{g\ll M} \sum_{d\mid 4g}
      \left(
      \frac{HM}{g^{3/2}d^{1/2}}
      +H\left(\frac{d}{g}\right)^{1/2}
      \right.\\
    &\hspace{45mm}\left.
      +\frac{M^3}{g^{5/2}d^{3/2}}
      +\frac{M^2}{g^{3/2}d^{1/2}}
      \right).
\end{aligned}
\end{equation*}
Using the divisor estimates
$$
    \sum_{d\mid 4g}d^{-1/2}\ll_\varepsilon g^\varepsilon, \qquad \sum_{d\mid 4g}d^{-3/2}\ll_\varepsilon g^\varepsilon, \qquad \sum_{d\mid 4g}d^{1/2}\ll_\varepsilon g^{1/2+\varepsilon},
$$
we obtain
$$
    \mathcal{D}_2^{\mathrm{off}}(H,M;\omega) \ll_\varepsilon n^\varepsilon \left( HM+M^3\right).
$$
Since $M=H/\sqrt n$ and $H\le n$, we have
$$
    M^3\le HM.
$$
Hence
$$
    \mathcal{D}_2^{\mathrm{off}}(H,M;\omega) \ll_\varepsilon HM n^\varepsilon.
$$
\end{proof}

\begin{proof}[Proof of Theorem~\ref{thm:M2-main}]

Proposition~\ref{prop:dual-large-sieve-r1} verifies Conjecture~\ref{conj:D2r} with $r=1$ and $\Phi_1(\omega) \ll n^{\varepsilon}$. Hence Propositions~\ref{prop:DtoM} gives the desired bound for $n^{1/2+\delta} \le H \le n$.
If $H\ge n$, then Lemma~\ref{lem:M2Hn} gives
\[
    M_2(H,n)
    \ll_{\varepsilon}
    n^\varepsilon\left(Hn+n^2\log H\right)
    \ll_{\varepsilon}
    Hn^{1+\varepsilon},
\]
after increasing $\varepsilon$ slightly. This proves the theorem.

\end{proof}


\subsection{Proof of the fourth-moment theorem}

In this subsection we prove a fourth moment estimate for the dual quadratic sums. After inserting $M=H/\sqrt{n}$, this estimate gives the expected bound for $M_4(H,n)$ in the smoothing range
\begin{equation*}
    n^{1/2+\delta} \leq H \ll n^{2/3}.
\end{equation*}

We begin with a simple exact-energy estimate for reciprocal sums.

\begin{lemma}[Two-dimensional reciprocal energy]
\label{lem:reciprocal-exact-energy-two}
For every fixed $\varepsilon>0$, one has
$$
    E^{=}_2(M) := \#\left\{ m_1,m_2,n_1,n_2\sim M: \frac1{m_1}+\frac1{m_2} = \frac1{n_1}+\frac1{n_2} \right\} \ll_\varepsilon M^{2+\varepsilon}.
$$
\end{lemma}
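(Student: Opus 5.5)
We count solutions of $1/m_1+1/m_2=1/n_1+1/n_2$ with all variables $\sim M$. The trivial solutions, where $\{n_1,n_2\}=\{m_1,m_2\}$ as multisets, number $O(M^2)$. For the rest we use the standard parametrisation of the equation $1/x+1/y=c$, or equivalently a divisor-counting argument.

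Write $s=1/m_1+1/m_2=(m_1+m_2)/(m_1m_2)$. For each fixed pair $(m_1,m_2)$ we bound the number of pairs $(n_1,n_2)$ with $1/n_1+1/n_2=s$ by $M^{\varepsilon}$; summing over the $\ll M^2$ pairs $(m_1,m_2)$ then gives the claim. Write $s=a/b$ in lowest terms, so $b\le m_1m_2\ll M^2$ and $a\le m_1+m_2\ll M$. The equation $1/n_1+1/n_2=a/b$ becomes $a n_1n_2=b(n_1+n_2)$. Multiplying by $a$ gives
\[
(an_1-b)(an_2-b)=b^2 .
\]
So $an_1-b$ is a divisor $e$ of $b^2$, and once $e$ is chosen, $n_1=(b+e)/a$ and $n_2=(b+b^2/e)/a$ are determined. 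Hence the number of solutions $(n_1,n_2)$ is at most $2\tau(b^2)$; the factor $2$ allows negative divisors, though with $n_i$ positive and $\sim M$ only a bounded sign pattern occurs. By the divisor bound, $\tau(b^2)\ll_\varepsilon b^{\varepsilon}\ll_\varepsilon M^{2\varepsilon}$, since $b\ll M^2$.

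Summing over $m_1,m_2\sim M$ gives $E_2^{=}(M)\ll_\varepsilon M^{2+2\varepsilon}$, and renaming $\varepsilon$ yields the lemma. The only point needing care is the factorisation identity and the fact that $b$ is polynomially bounded in $M$, which makes the divisor bound uniform. No genuine obstacle is expected.
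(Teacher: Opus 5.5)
Your proof is correct. It takes a different route from the paper, which gives no argument of its own: it quotes the case $r=2$ of the reciprocal-energy bound in Shparlinski's Lemma~4.2 (going back to Karatsuba and Bourgain--Garaev), and remarks only that the dyadic solutions are a subset of those with all variables in $[1,M]$.

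You instead fix $(m_1,m_2)$, write $1/m_1+1/m_2=a/b$ in lowest terms, and use $(an_1-b)(an_2-b)=b^2$ to see that $(n_1,n_2)$ is determined by a divisor of $b^2$, with $b\le m_1m_2\ll M^2$; the divisor bound then finishes. Two small remarks: only positive divisors occur, since $1/n_1<a/b$ forces $an_1>b$; and your separate treatment of trivial solutions is redundant, because the divisor count already includes them.

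Your route is elementary and self-contained, and it proves something stronger than the lemma. Each value $1/m_1+1/m_2$ has at most $M^{o(1)}$ representations, not just $M^{2+o(1)}$ on average, and no dyadic restriction is needed.

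The citation, on the other hand, covers the $r$-fold equation $\sum_{i\le r}1/m_i=\sum_{j\le r}1/n_j$ for every $r$, which is the form relevant to higher moments. The natural extension of your argument (fix the $m_i$ and $n_3,\dots,n_r$, then apply the divisor argument to $(n_1,n_2)$) gives only $M^{2r-2+o(1)}$. That matches the diagonal scale $M^{r}$ only when $r=2$.
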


\begin{proof}
This is the case $r=2$ of~\cite[Lemma~4.2]{ShparlinskiKloosterman}; the underlying estimate is due to Karatsuba~\cite[proof of Theorem~1]{Karatsuba}, and see also~\cite[Lemma~4]{BourgainGaraev}. Indeed, the dyadic solutions counted by $E^{=}_2(M)$ form a subset of the solutions with all four variables in $[1,M]$.
\end{proof}

Next we record the denominator sum which will be used to estimate the off-diagonal contribution.

\begin{lemma}[A reciprocal-denominator gcd sum]
\label{lem:reciprocal-denominator-gcd}
Let $M\geq 1$. For $a,b\sim M$, write
$$
    \theta(a,b)=\frac1a+\frac1b = \frac{u(a,b)}{v(a,b)}, \qquad (u(a,b),v(a,b))=1.
$$
Then, for every fixed $\varepsilon>0$,
$$
    \sum_{a,b,c,d\sim M} \frac{(v(a,b),v(c,d))}{v(a,b)^{1/2}v(c,d)^{1/2}} \ll_\varepsilon M^{2+\varepsilon}.
$$
\end{lemma}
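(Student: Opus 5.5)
The plan is to linearise the gcd by a divisor sum, which decouples the pair $(a,b)$ from the pair $(c,d)$. Then I will bound a single one-pair sum restricted by a divisibility condition. Using $(x,y)\le\sum_{t\mid x,\ t\mid y}t$, the left-hand side is at most
\[
\sum_{t\ge1} t\, S(t)^2,\qquad S(t):=\sum_{\substack{a,b\sim M\\ t\mid v(a,b)}} v(a,b)^{-1/2}.
\]
Since $v(a,b)\le ab\le M^2$, only $t\le M^2$ occur. It therefore suffices to prove
\[
S(t)\ll_\varepsilon M^{1+\varepsilon}t^{-1+\varepsilon}\qquad(t\le M^2).
\]
Indeed, then $\sum_{t\le M^2}t\,S(t)^2\ll M^{2+3\varepsilon}\sum_{t\le M^2}t^{-1}\ll M^{2+4\varepsilon}$, which is the lemma after rescaling $\varepsilon$. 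The heuristic behind this target is that typically $v\asymp M^2$, so the weight is about $1/M$, and roughly $M^2/t$ pairs have $t\mid v$.

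To compute $v(a,b)$ explicitly, write $g=(a,b)$, $a=ga'$, $b=gb'$ with $(a',b')=1$. Then $\theta(a,b)=(a'+b')/(ga'b')$. Since $(a'+b',a'b')=1$, the cancellation is exactly $e:=(a'+b',g)$, so $e\mid g$ and
\[
v(a,b)=\frac{g a'b'}{e},\qquad v(a,b)\asymp\frac{M^2}{ge},\qquad v(a,b)^{-1/2}\asymp\frac{(ge)^{1/2}}{M}.
\]
I will split $S(t)$ according to the values of $g$ and of $e\mid g$. For fixed $g,e$, the condition $t\mid v$ is $t\mid ga'b'/e$. Since $e\mid g$, this implies $t/(t,g)\mid a'b'$.

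The key counting step is the following. For $q\ge1$ and $X\ge1$,
\[
\#\{a',b'\le X: q\mid a'b'\}\ll_\varepsilon X^{\varepsilon}\,\frac{X^2}{q}\qquad(q\le X^2).
\]
To see this, write $a'b'=qk$ with $k\le X^2/q$; each product $qk$ has $\ll X^{\varepsilon}$ factorisations. Crucially, this bound has no additive "$+X$" term, which is exactly what keeps large $t$ harmless. Applying it with $X\asymp M/g$ and $q=t/(t,g)$, and noting that the set is empty unless $q\ll X^2$, I get
\[
S(t)\ll M^{\varepsilon}\sum_{g\ll M}\sum_{e\mid g}\frac{(ge)^{1/2}}{M}\cdot\frac{M^2}{g^2}\cdot\frac{(t,g)}{t}
\ll\frac{M^{1+\varepsilon}}{t}\sum_{g\ll M}\frac{(t,g)}{g^{3/2}}\sum_{e\mid g}e^{1/2}.
\]
Using $\sum_{e\mid g}e^{1/2}\ll g^{1/2+\varepsilon}$, the inner expression is $\ll\sum_{g\le M}(t,g)g^{-1+\varepsilon}$. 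Writing $(t,g)\le\sum_{s\mid t,\,s\mid g}s$, this is at most $M^{\varepsilon}\sum_{s\mid t}\sum_{g'\le M/s}g'^{-1}\ll M^{2\varepsilon}\tau(t)$. This yields $S(t)\ll M^{1+3\varepsilon}\tau(t)/t$, which is the target bound.

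The main obstacle is getting this bookkeeping right. A cruder count that ignores the restriction $g\mid a,b$ produces a factor $\sum_g g^{-1/2}\asymp M^{1/2}$ and ruins the bound. Similarly, a divisor-count bound of the form $M^2/t+M$ gives $\sum_t t\cdot O(1)\asymp M^4$. So the two points to check carefully are: the factorisation argument must give the clean $X^{2+\varepsilon}/q$ bound uniformly in $q$, and the passage from $t\mid ga'b'/e$ to $t/(t,g)\mid a'b'$ must lose only the factor $(t,g)$, whose average over $g$ is harmless.
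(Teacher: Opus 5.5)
Your proof is correct and is essentially the paper's argument. Both use the same divisor-sum linearisation of the gcd to reduce everything to the one-pair bound $A_\ell\ll M^{1+\varepsilon}/\ell$ (your $S(t)$), and both use the same parametrisation $a=ga'$, $b=gb'$, $v(a,b)=ga'b'/(a'+b',g)$, discarding the coprimality and gcd side conditions when counting; the only difference is cosmetic bookkeeping of the divisibility condition, where you pass to $t/(t,g)\mid a'b'$, count with the divisor bound and average $(t,g)$ over $g$, while the paper writes $\ell=\ell_0\ell_1\ell_2$ with $\ell_0\mid s$, $\ell_1\mid x$, $\ell_2\mid y$ and sums each variable separately.
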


\begin{proof}
First note that $v(a,b)\ll M^2$. Indeed, if
$$
    a=gx,\qquad b=gy,\qquad (x,y)=1,
$$
then
$$
    \theta(a,b)=\frac{x+y}{gxy},
$$
and hence the reduced denominator divides
$$
    gxy=\frac{ab}{(a,b)}\ll M^2.
$$

Using
$$
    (r,s)\leq \sum_{\ell\mid r,\ \ell\mid s}\ell,
$$
we get
$$
    \sum_{a,b,c,d\sim M} \frac{(v(a,b),v(c,d))}{v(a,b)^{1/2}v(c,d)^{1/2}} \leq \sum_{\ell\ll M^2}\ell A_\ell^2,
$$
where
$$
    A_\ell = \sum_{\substack{a,b\sim M\\ \ell\mid v(a,b)}} v(a,b)^{-1/2}.
$$
We claim that
$$
    A_\ell\ll_\varepsilon \frac{M^{1+\varepsilon}}{\ell}.
$$

To prove this, write
$$
    a=gx,\qquad b=gy,\qquad (x,y)=1.
$$
Put
$$
    t=(g,x+y),\qquad g=ts.
$$
Then
$$
    \theta(a,b) = \frac{x+y}{tsxy} = \frac{(x+y)/t}{sxy},
$$
and the last fraction is in lowest terms. Hence
$$
    v(a,b)=sxy.
$$
Moreover,
$$
    tsx\sim M,\qquad tsy\sim M.
$$
Ignoring the conditions $(x,y)=1$ and $t\mid x+y$ can only enlarge the sum. Therefore
$$
    A_\ell \leq \sum_{\substack{t,s,x,y\\ tsx\sim M,\ tsy\sim M\\ \ell\mid sxy}} (sxy)^{-1/2}.
$$
If $\ell\mid sxy$, then we may write
$$
    \ell=\ell_0\ell_1\ell_2, \qquad \ell_0\mid s,\quad \ell_1\mid x,\quad \ell_2\mid y.
$$
Thus
$$
    A_\ell \leq \sum_{\ell_0\ell_1\ell_2=\ell} \sum_{\substack{t,s,x,y\\ tsx\sim M,\ tsy\sim M\\ \ell_0\mid s,\ \ell_1\mid x,\ \ell_2\mid y}} (sxy)^{-1/2}.
$$
For fixed $t,s$, put
$$
    X=\frac{M}{ts}.
$$
Then $x,y\sim X$. We use
$$
    \sum_{\substack{x\sim X\\ \ell_1\mid x}}x^{-1/2} \ll \frac{X^{1/2}}{\ell_1},
$$
and similarly for $y$. Hence
$$
    A_\ell \ll \sum_{\ell_0\ell_1\ell_2=\ell} \sum_{\substack{t,s\\ ts\ll M\\ \ell_0\mid s}} s^{-1/2}\frac{M}{ts}\frac1{\ell_1\ell_2}.
$$
Therefore
$$
    A_\ell \ll M \sum_{\ell_0\ell_1\ell_2=\ell} \frac1{\ell_1\ell_2} \sum_{\substack{s\ll M\\ \ell_0\mid s}}s^{-3/2} \sum_{t\ll M/s}\frac1t.
$$
Since
$$
    \sum_{t\ll M/s}\frac1t\ll M^\varepsilon
$$
and
$$
    \sum_{\substack{s\ll M\\ \ell_0\mid s}}s^{-3/2} \ll \ell_0^{-3/2},
$$
we obtain
$$
    A_\ell \ll_\varepsilon M^{1+\varepsilon} \sum_{\ell_0\ell_1\ell_2=\ell} \frac1{\ell_1\ell_2\ell_0^{3/2}}.
$$
Note that
$$
    \sum_{\ell_0\ell_1\ell_2=\ell} \frac1{\ell_1\ell_2\ell_0^{3/2}}\ll_\varepsilon \ell^{-1+\varepsilon}.
$$
Since $\ell\ll M^2$, the factor $\ell^\varepsilon$ may be absorbed into $M^\varepsilon$. Hence
$$
    A_\ell\ll_\varepsilon \frac{M^{1+\varepsilon}}{\ell}.
$$

Consequently,
$$
    \sum_{\ell\ll M^2}\ell A_\ell^2 \ll_\varepsilon \sum_{\ell\ll M^2}\ell\frac{M^{2+\varepsilon}}{\ell^2} \ll_\varepsilon M^{2+\varepsilon}.
$$
This proves the lemma.
\end{proof}

We now prove the fourth dual moment estimate.

\begin{prop}
\label{prop:D4-dual-unconditional}
Recall that
$$
\mathcal{D}_4(H,M;\omega)=\sum_{h\sim H}|T_\omega(h)|^4,
$$
where
$$
    T_\omega(h) = \sum_{m\sim M} V_{\omega;h,m} e\left(\frac{h^2}{4m}\right).
$$
Uniformly in $H,M$ and $1\leq \omega\leq n/10$, one has
$$
    \mathcal{D}_4(H,M;\omega) \ll_\varepsilon HM^{2+\varepsilon}+M^{6+\varepsilon}.
$$
\end{prop}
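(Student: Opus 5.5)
Expanding the fourth power gives
\[
\mathcal{D}_4(H,M;\omega)=\sum_{m_1,m_2,n_1,n_2\sim M}\ \sum_{h\sim H} V_{\omega;\mathbf m,\mathbf n}(h)\, e\!\left(\frac{h^2}{4}\Delta(\mathbf m,\mathbf n)\right),
\]
with $r=2$ and $\Delta(\mathbf m,\mathbf n)=1/m_1+1/m_2-1/n_1-1/n_2$. We split the quadruples according to whether $\Delta=0$.

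\emph{Diagonal part.} When $\Delta=0$, we bound the inner sum trivially by $O(H)$ using \eqref{eq:bv-product-dual-weight}. The number of such quadruples is $E_2^{=}(M)\ll M^{2+\varepsilon}$ by Lemma~\ref{lem:reciprocal-exact-energy-two}. This contributes $HM^{2+\varepsilon}$.

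\emph{Off-diagonal part.} For $\Delta\neq0$ we apply \eqref{eq:BV-partial-summation-offdiag}, which reduces the inner sum to $\sup_J|\sum_{h\in J}e(\alpha h^2)|$ with $\alpha=\Delta/4$. Write
\[
\theta_1=\frac1{m_1}+\frac1{m_2}=\frac{u_1}{v_1},\qquad \theta_2=\frac1{n_1}+\frac1{n_2}=\frac{u_2}{v_2},
\]
both in lowest terms, as in Lemma~\ref{lem:reciprocal-denominator-gcd}. Then $\alpha=(u_1v_2-u_2v_1)/(4v_1v_2)$. Every common factor of numerator and denominator divides $4(v_1,v_2)$: a prime $p$ dividing $v_1$ and the numerator must divide $u_2v_1$, hence $p\mid u_1v_2$, and since $(u_1,v_1)=1$ we get $p\mid v_2$. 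Prime powers need a little more care. The aim is that the reduced denominator satisfies
\[
q\gg \frac{v_1v_2}{(v_1,v_2)},\qquad q\ll v_1v_2\ll M^4 .
\]
Feed this into the quadratic Weyl bound \cite[Theorem 8.1]{IwKow}:
\[
\sum_{h\in J}e(\alpha h^2)\ll Hq^{-1/2}+q^{1/2}\log q
\ll M^{\varepsilon}\left(H\frac{(v_1,v_2)^{1/2}}{(v_1v_2)^{1/2}}+(v_1v_2)^{1/2}\right).
\]
If the $q\gg v_1v_2/(v_1,v_2)$ claim is delicate at the prime $2$ or at prime powers, the constant $4$ is harmless. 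Since $(v_1,v_2)\ge1$, it is safe to weaken the first term to $H(v_1,v_2)/(v_1v_2)^{1/2}$, which is exactly the form in Lemma~\ref{lem:reciprocal-denominator-gcd}.

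\emph{Summing over quadruples.} The first term, summed over all quadruples, is $\ll HM^{2+\varepsilon}$ directly by Lemma~\ref{lem:reciprocal-denominator-gcd}. The second term is at most $M^{\varepsilon}\cdot M^4\cdot M^2=M^{6+\varepsilon}$, using $v_i\ll M^2$ and that there are $M^4$ quadruples. Adding the diagonal part yields $HM^{2+\varepsilon}+M^{6+\varepsilon}$.

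The main obstacle is the arithmetic step: the lower bound on the reduced denominator $q$ in terms of $v_1v_2/(v_1,v_2)$, uniformly and including the factor $4$. I would first prove it prime by prime via $p$-adic valuations: if $v_p(v_1)\ne v_p(v_2)$, the numerator is not divisible by $p$, so there is no loss; if they are equal, the loss is at most $p^{v_p(v_1)}$. This gives $q\ge v_1v_2/(4(v_1,v_2))$. The rest is bookkeeping with the earlier lemmas.
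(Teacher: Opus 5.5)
Your architecture is the same as the paper's: expand, bound the $\Delta=0$ terms by $H\,E^{=}_2(M)$, apply partial summation and the quadratic Weyl bound, then use Lemma~\ref{lem:reciprocal-denominator-gcd} and the trivial count $M^4\cdot M^{2+\varepsilon}$. However, the arithmetic step you single out as the main obstacle is false as stated. The reduced denominator need not satisfy $q\gg v_1v_2/(v_1,v_2)$.

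Here is a counterexample. Take a prime $P\sim M$ and coprime $a\neq b\sim M$, both different from $P$, and set $m_1=n_1=P$, $m_2=a$, $n_2=b$. Then $v_1=Pa$, $v_2=Pb$ and $(v_1,v_2)=P$, while $u_1v_2-u_2v_1=P^2(b-a)$. So $\Delta=(b-a)/(ab)$ has reduced denominator $ab\asymp M^2$, and hence $q\le 4ab$. Your claim would instead require $q\ge Pab/4\asymp M^3$.

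Your valuation analysis contains the two errors that produce this:
\begin{itemize}
\item If $0<v_p(v_1)<v_p(v_2)$, then $p\nmid u_1u_2$, so the numerator has $p$-adic valuation exactly $v_p(v_1)$, not $0$.
\item If $v_p(v_1)=v_p(v_2)=e>0$, the numerator $p^e(u_1v_2'-u_2v_1')$ can be divisible by $p^{2e}$, so the loss at $p$ can be $p^{2e}$ rather than $p^e$.
\end{itemize}

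The correct statement, which is what the paper uses, is
\[
(u_1v_2-u_2v_1,\,v_1v_2)\le (v_1,v_2)^2 .
\]
To see it, note that a prime with $v_p(v_1)=0<v_p(v_2)$ cannot divide the numerator, because $p\nmid u_2v_1$ while $p\mid u_1v_2$. In every other case the $p$-part of the gcd is at most $p^{2\min(v_p(v_1),v_p(v_2))}$.

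Dividing $\Delta$ by $4$ can only enlarge the reduced denominator, so $q\ge v_1v_2/(v_1,v_2)^2$. This gives $Hq^{-1/2}\le H(v_1,v_2)/(v_1v_2)^{1/2}$. So the ``weakening'' from $(v_1,v_2)^{1/2}$ to $(v_1,v_2)$ is not slack. It is exactly the true bound, and it matches the weight in Lemma~\ref{lem:reciprocal-denominator-gcd} with nothing to spare. With this replacement the rest of your argument goes through and coincides with the paper's proof.
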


\begin{proof}
Expanding the fourth power gives
$$
    \mathcal{D}_4(H,M;\omega) = \sum_{m_1,m_2,n_1,n_2\sim M} \sum_{h\sim H} V_{\omega;h,m_1}V_{\omega;h,m_2} V_{\omega;h,n_1}V_{\omega;h,n_2} e\left(\frac{h^2}{4}\Lambda\right),
$$
where
$$
    \Lambda = \frac1{m_1}+\frac1{m_2} -\frac1{n_1}-\frac1{n_2}.
$$
We split the contribution according as $\Lambda=0$ or $\Lambda\neq 0$.

The contribution of $\Lambda=0$ is bounded, by Lemma \ref{lem:reciprocal-exact-energy-two} and the uniform boundedness of the weights, by
$$
    \ll HE^{=}_2(M) \ll_\varepsilon HM^{2+\varepsilon}.
$$

It remains to estimate the off-diagonal contribution. 
By \eqref{eq:BV-partial-summation-offdiag},
\begin{equation*}
    \begin{split}
        &\left| \sum_{h\sim H} V_{\omega;h,m_1} V_{\omega;h,m_2} \overline{V_{\omega;h,n_1}} \overline{V_{\omega;h,n_2}} e\left( \frac{h^2}{4}\Lambda \right) \right| \\
        &\qquad \qquad \ll \sup_{J\subset [H/2,H]} \left| \sum_{h\in J} e\left( \frac{h^2}{4} \Lambda \right) \right|.
    \end{split}
\end{equation*}
By Lemma \ref{lem:reciprocal-denominator-gcd}, for a pair $a,b\sim M$, write
$$
    \theta(a,b)=\frac1a+\frac1b = \frac{u(a,b)}{v(a,b)}
$$
in lowest terms. We have already observed that
$$
    v(a,b)\ll M^2.
$$

Let
$$
    \theta(m_1,m_2)=\frac{u}{v}, \qquad \theta(n_1,n_2)=\frac{u'}{v'}
$$
be written in lowest terms. In the off-diagonal case,
$$
    \theta(m_1,m_2)\neq \theta(n_1,n_2).
$$
Let $Q$ be the denominator of
$$
  \Lambda =  \theta(m_1,m_2)-\theta(n_1,n_2) = \frac{u}{v}-\frac{u'}{v'}
$$
in lowest terms. Then
$$
    Q = \frac{vv'}{(uv'-u'v,vv')}.
$$
Moreover,
$$
    (uv'-u'v,vv')\leq (v,v')^2.
$$
Therefore
$$
    Q^{-1/2} \ll \frac{(v,v')}{(vv')^{1/2}}.
$$

By Lemma \ref{lem:reciprocal-exact-energy-two}, the diagonal has already been removed, so $Q\geq 1$. The harmless factor $1/4$ in the phase changes the denominator by at most an absolute factor. Thus, invoke the quadratic Weyl bound \cite[Theorem 8.1]{IwKow} again, uniformly for intervals $J\subset [H/2,H]$,
$$
    \sum_{h\in J}e\left(\frac{\Lambda h^2}{4}\right) \ll HQ^{-1/2}+Q^{1/2}\log(2Q).
$$
Since $Q\ll vv'\ll M^4$, we get
$$
    \sum_{h\in J}e\left(\frac{\Lambda h^2}{4}\right) \ll_\varepsilon H\frac{(v,v')}{(vv')^{1/2}} + M^{2+\varepsilon}.
$$

Therefore the off-diagonal contribution is
$$
    \ll_\varepsilon H\Sigma+M^{6+\varepsilon},
$$
where
$$
    \Sigma = \sum_{m_1,m_2,n_1,n_2\sim M} \frac{(v(m_1,m_2),v(n_1,n_2))}{v(m_1,m_2)^{1/2}v(n_1,n_2)^{1/2}}.
$$
By Lemma \ref{lem:reciprocal-denominator-gcd},
$$
    \Sigma\ll_\varepsilon M^{2+\varepsilon}.
$$
Hence the off-diagonal contribution is
$$
    \ll_\varepsilon HM^{2+\varepsilon}+M^{6+\varepsilon}.
$$
Combining this with the diagonal contribution proves the proposition.
\end{proof}

\begin{proof}[Proof of Theorem~\ref{thm:M4-main}]
Recall that
\[
    M=\frac{H}{\sqrt n}.
\]
By Proposition~\ref{prop:D4-dual-unconditional},
\[
    \mathcal{D}_4(H,M;\omega)
    \ll_{\varepsilon}
    HM^{2+\varepsilon}+M^{6+\varepsilon}.
\]
If $H\le n^{2/3}$, then
\[
    M^6
    =
    \left(\frac{H}{\sqrt n}\right)^6
    \le
    H\left(\frac{H}{\sqrt n}\right)^2
    =
    HM^2.
\]
Hence
\[
    \mathcal{D}_4(H,M;\omega)
    \ll_{\varepsilon}
    HM^{2+\varepsilon}.
\]
Applying Proposition~\ref{prop:DtoM} with $r=2$, we obtain
\[
    M_4(H,n)\ll_{\varepsilon,\delta} Hn^{2+\varepsilon}.
\]
This proves the theorem.
\end{proof}

\subsection{Higher moments: conjectural picture}

We conclude this section by recording the conjectural picture suggested
by the dual expansion in Proposition~\ref{prop:dual-expansion-Sw}. For
$h\sim H$, the main term is a dual $m$-sum of length
\[
    M\asymp \frac{H}{\sqrt n},
\]
and each summand has size
\[
    B=\frac{H}{M^{3/2}}=n^{3/4}H^{-1/2}.
\]
If the phases
\[
    e\left(\frac{h^2}{4m}\right)
\]
exhibit square-root cancellation in the $m$-aspect, then a typical
value of the dual sum should have size
\[
    BM^{1/2}=n^{1/2}.
\]
Thus one expects
\[
    M_{2r}(H,n)\ll_{r,\varepsilon,\delta} Hn^{r+\varepsilon},
    \qquad
    H\geq n^{1/2+\delta}.
\]

For $H\leq n^{1/2-\delta}$, the pointwise estimate
\eqref{eq:OptBound} gives the complementary bound
\[
    M_{2r}(H,n)
    \ll_{r,\varepsilon,\delta}
    n^{r+\varepsilon}H^{1-2r}.
\]
This leads to the following ideal moment conjecture.

\begin{conj}[Ideal moment bound]
Let $r\geq 1$ be fixed. For every fixed $\delta>0$ and every
$\varepsilon>0$, one expects
\[
    M_{2r}(H,n)
    \ll_{r,\varepsilon,\delta}
    n^\varepsilon
    \begin{cases}
        n^r H^{1-2r},
            & 1\leq H\leq n^{1/2-\delta},\\
        Hn^r,
            & H\geq n^{1/2+\delta}.
    \end{cases}
\]
\end{conj}

\section{Proof of the discrepancy bound}
\label{sec:dis}

\begin{proof}[Proof of Theorem~\ref{thm:discrk}]

We prove a slightly more general estimate. Let $k\geq 2$, and let
$(\kappa,\kappa)$ be an exponent pair with
\[
     0<\kappa\leq \frac{13}{84}.
\]
Set
\begin{equation}
\label{eq:rho-discrepancy}
    \rho=\rho_{k,\kappa}
    =
    \frac{(1-\kappa)k+2\kappa}
    {2(\kappa k+1-2\kappa)}.
\end{equation}
We shall prove
\[
    D_k(n)\leq n^{-\rho+o(1)}.
\]
Taking $\kappa=13/84$ then gives
\[
    \rho=\rho_k=\frac{71k+26}{26k+116},
    \qquad k\geq 2.
\]

We first record three elementary consequences of \eqref{eq:rho-discrepancy}.
For $k\geq 2$,
\begin{equation}
\label{eq:rho-first-check}
    \frac{k}{2}-\rho
    =
    \frac{\kappa(k-2)(k+1)}
    {2(\kappa k+1-2\kappa)}
    \geq 0.
\end{equation}
Also,
\begin{equation}
\label{eq:rho-second-check}
\begin{aligned}
    k-\rho-
    \left(
        1+\frac{k-2}{2(1-2\kappa)}
    \right)
    =
    \frac{
        \kappa(k-2)\bigl(k-1-(4k-2)\kappa\bigr)
    }
    {
        2(1-2\kappa)(\kappa k+1-2\kappa)
    }
    \geq 0,
\end{aligned}
\end{equation}
since $\kappa\leq 13/84<1/6\leq (k-1)/(4k-2)$. Finally,
\begin{equation}
\label{eq:rho-balance}
    \rho\bigl(\kappa(k-2)+1\bigr)
    +
    \frac{(1+\kappa)(k-2)}{2}
    +1
    =
    k.
\end{equation}

Put
\[
    T(h,n)=\sum_{1\le a\leq n}\e\left(h\sqrt a\right).
\]
Applying Corollary~\ref{cor:ET} to the $n^k$ points
\[
    \sqrt{a_1}+\cdots+\sqrt{a_k},
    \qquad 1\leq a_1,\ldots,a_k\leq n,
\]
and using the factorisation of the exponential sum, we get, for any
integer $H_0\geq 1$,
\begin{equation}
\label{eq:ET-discrepancy-final}
    D_k(n)
    \ll
    \frac1{H_0}
    +
    \frac1{n^k}
    \sum_{h\leq H_0}
    \frac{|T(h,n)|^k}{h}.
\end{equation}

Fix a small $\nu>0$, and choose
\begin{equation*}
    H_0 = \left\lfloor n^{\rho-\nu} \right\rfloor.
\end{equation*}
We shall prove that, uniformly for every dyadic $H\leq H_0$ and every dyadic $m\leq n$,
\begin{equation}
\label{eq:key-dyadic-discrepancy}
    \frac{1}{H} \sum_{h\sim H} \left| S(h,m) \right|^k \ll n^{k-\rho+O(\nu)+o(1)}.
\end{equation}
Indeed, decomposing $T(h,n)$ into dyadic blocks gives
\begin{equation*}
    \left| T(h,n) \right|^k \ll n^{o(1)} \sum_{m \leq n, \, \text{dyadic}} \left| S(h,m) \right|^k.
\end{equation*}
Hence dyadic summation over $h$ and $m$, together with \eqref{eq:key-dyadic-discrepancy}, gives
\begin{equation*}
    \sum_{h\leq H_0} \frac{\left| T(h,n) \right|^k}{h} \ll n^{k-\rho+O(\nu)+o(1)}.
\end{equation*}
Substituting this into~\eqref{eq:ET-discrepancy-final}, we obtain
\begin{equation*}
    D_k(n) \ll n^{-\rho+O(\nu)+o(1)}.
\end{equation*}
Since $\nu>0$ is arbitrary, this proves the desired discrepancy bound.

It remains to prove~\eqref{eq:key-dyadic-discrepancy}. We may assume $H,m\geq 2$. Put
\begin{equation*}
    \theta = \frac{1+2\kappa}{2(1-2\kappa)}.
\end{equation*}
After a harmless subdivision of dyadic intervals, we may assume that $h\sim H$ does not cross any of the transition points in~\eqref{eq:OptBound}.

First suppose that
\begin{equation*}
    H \leq m^{1/2+\nu}.
\end{equation*}
By Lemmas~\ref{lem:vdCorput-1} and~\ref{lem:vdCorput-2}, uniformly in this range, 
\begin{equation*}
    \left| S(h,m) \right| \ll m^{1/2+O(\nu)}.
\end{equation*}
Therefore
\begin{equation*}
    \frac{1}{H} \sum_{h\sim H} \left| S(h,m) \right|^k \ll m^{k/2+O(\nu)} \leq n^{k/2+O(\nu)} \leq n^{k-\rho+O(\nu)},
\end{equation*}
where the last inequality follows from~\eqref{eq:rho-first-check}.

Now assume that
\begin{equation*}
    H > m^{1/2+\nu}.
\end{equation*}
By Theorem~\ref{thm:M2-main}, applied with $m$ in place of $n$, we have
\begin{equation*}
    M_2(H,m) = \sum_{h\sim H} \left| S(h,m) \right|^2 \ll_{\nu} H m^{1+\nu}.
\end{equation*}
Thus, if $P(H,m)$ is a pointwise upper bound for $\left| S(h,m) \right|$ on $h\sim H$, then
\begin{equation}
\label{eq:moment-reduction-discrepancy}
\begin{split}
    \frac{1}{H} \sum_{h\sim H} \left| S(h,m) \right|^k 
    &\leq P(H,m)^{k-2} \frac{M_2(H,m)}{H} \\
    &\ll_{\nu} P(H,m)^{k-2} m^{1+\nu}.
\end{split}
\end{equation}
For $k=2$, the factor $P(H,m)^{k-2}$ is simply $1$.

Consider next the range
\begin{equation*}
    m^{1/2+\nu} < H \leq m^\theta.
\end{equation*}
By~\eqref{eq:OptBound},
\begin{equation*}
    P(H,m) \ll H^{1/2}m^{1/4}.
\end{equation*}
Hence~\eqref{eq:moment-reduction-discrepancy} gives
\begin{equation*}
\begin{aligned}
    \frac{1}{H} \sum_{h\sim H} \left| S(h,m) \right|^k 
    &\ll H^{(k-2)/2} m^{(k-2)/4+1+\nu}  \\
    &\leq m^{1+\frac{k-2}{2(1-2\kappa)}+\nu} \\
    &\leq n^{k-\rho+O(\nu)},
\end{aligned}
\end{equation*}
where the last step follows from~\eqref{eq:rho-second-check}.

Finally suppose that
\begin{equation*}
    H > m^\theta.
\end{equation*}
By~\eqref{eq:OptBound},
\begin{equation*}
    P(H,m) \ll H^{\kappa+o(1)} m^{1/2+\kappa/2}.
\end{equation*}
Using~\eqref{eq:moment-reduction-discrepancy}, we get
\begin{equation*}
\begin{aligned}
    \frac{1}{H} \sum_{h\sim H} \left| S(h,m) \right|^k 
    &\ll H^{\kappa(k-2)+o(1)} m^{(1+\kappa)(k-2)/2+1+\nu}  \\
    &\leq n^{\rho\kappa(k-2) + \frac{(1+\kappa)(k-2)}{2} + 1 + O(\nu) + o(1)} \\
    &= n^{k-\rho+O(\nu)+o(1)},
\end{aligned}
\end{equation*}
where we used $H \leq H_0 \leq n^{\rho-\nu}$, $m \leq n$, and the balancing identity~\eqref{eq:rho-balance}.

The three ranges prove~\eqref{eq:key-dyadic-discrepancy}, and hence
\begin{equation*}
    D_k(n) \leq n^{-\rho_{k,\kappa}+o(1)}.
\end{equation*}
Substituting Bourgain's exponent pair $\kappa=13/84$ gives
\[
    D_k(n) \leq n^{-\rho_k+o(1)},
    \qquad
    \rho_k=\frac{71k+26}{26k+116},
    \qquad k\ge 2,
\]
which is precisely Theorem~\ref{thm:discrk}.
\end{proof}

\section*{Acknowledgements}
The author is grateful to Igor Shparlinski for suggesting the problem
studied in this paper, and for many helpful discussions. The author
also thanks Stefan Steinerberger for useful remarks. This work was
supported by the China Scholarship Council.

\end{document}